\newtheorem{thm}{Theorem}[section]
\newtheorem{lem}[thm]{Lemma}
\newtheorem{cor}[thm]{Corollary}
\newtheorem{prop}[thm]{Proposition}
\newtheorem{conjecture}[thm]{Conjecture}
\theoremstyle{definition} 
\theoremstyle{remark} 
\theoremstyle{remark} 
\theoremstyle{remark} 
\theoremstyle{remark} 
\theoremstyle{definition} 
\theoremstyle{remark}  
\theoremstyle{definition}
\newcommand{\LL}{\mathcal{L}}     
\newcommand{\MM}{\mathcal{M}}  
\newcommand{\OO}{\mathcal{O}}    
\newcommand{\FF}{\mathbb{F}}      
\newcommand{\RR}{\mathbb{R}}     
\newcommand{\PP}{\mathbb{P}}      
\newcommand{\XX}{\mathcal{X}}      
\newcommand{\QQ}{\mathbb{Q}}      
\newcommand{\CC}{\mathbb{C}}      
\newcommand{\mm}{\mathfrak{m}}   
\newcommand{\Gm}{\mathbb{G}_m}  
\newcommand{\be}{\begin{equation}}
\newcommand{\ee}{\end{equation}}
\newcommand{\benn}{\begin{equation*}}
\newcommand{\eenn}{\end{equation*}}
\newcommand{\ba}{\begin{aligned}}
\newcommand{\ea}{\end{aligned}}
\newcommand{\bbm}{\begin{bmatrix}}
\newcommand{\ebm}{\end{bmatrix}}
\newcommand{\bpm}{\begin{pmatrix}}
\newcommand{\epm}{\end{pmatrix}}
\newcommand{\bi}{\begin{itemize}}
\newcommand{\ei}{\end{itemize}}
\newcommand{\xhelp}[1]{\textbf{$\diamondsuit\diamondsuit$ Fix me: #1}}
\newcommand{\Spec}{\operatorname{Spec}}
\newcommand{\ord}{\operatorname{ord}}
\newcommand{\Div}{\operatorname{div}}    
\newcommand{\supp}{\operatorname{supp}}   
\newcommand{\Pic}{\operatorname{Pic}}
\newcommand{\pr}{\operatorname{pr}}     
\newcommand{\sep}[1]{{#1}^{\operatorname{s}}}    
\newcommand{\Spf}{\operatorname{Spf}}    
\newcommand{\Frac}{\operatorname{Frac}}    
\newcommand{\chern}[1]{c_1\left(#1\right)}   
\newcommand{\codim}{\operatorname{codim}}  
\newcommand{\dist}{\operatorname{dist}}   
\newcommand{\an}[1]{#1^{\operatorname{an}}}  
\newcommand{\simarrow}{\stackrel{\sim}{\rightarrow}}    
\newcommand{\into}{\hookrightarrow}     
\newcommand{\alg}[1]{\overline{#1}}   
\theoremstyle{Theorem} \newtheorem*{siu}{Siu's Theorem}
\newcommand{\met}[1]{\overline{#1}}   
\newcommand{\achern}[1]{\widehat{c}_1\left(#1\right)}  
\newcommand{\lift}[1]{\widetilde{#1}}  
\newcommand{\fX}{\mathfrak{X}}    
\newcommand{\fL}{\mathfrak{L}}     
\newcommand{\metric}{\|\cdot\|}    
\newcommand{\orb}{O_v}    
\newcommand{\muinv}{\mu_{\varphi, v}}   
\newcommand{\Kvcirc}{K_v^{\circ}}   
\newcommand{\BB}{\mathcal{B}}
\newcommand{\YY}{\mathcal{Y}}
\newcommand{\UU}{\mathcal{U}}
\title[Dynamical Equidistribution]{
Equidistribution of dynamically small 
subvarieties over the function field 
of a curve 
}
\author{X.W.C. Faber}
\address{X.W.C. Faber\\
Department of Mathematics and Statistics\\
McGill University\\
805 Sherbrooke Street West \\
Montreal, QC  H2M 1X9 \\
CANADA} 
\email{xander@math.mcgill.ca}
\subjclass[2000]{14G40 (primary);
 11G35 (secondary)}
\keywords{Arithmetic Dynamics, Equidistribution, Arakelov Theory, Function Fields, Preperiodic Points, Small Points}
\begin{document}

 \baselineskip=17pt 

	\begin{abstract}
		For a projective variety $X$ defined over a field $K$, there is a special class of morphisms 
		$\varphi: X \to X$ called algebraic dynamical systems. In this paper we take $K$
		to be the function field of a smooth curve and prove that at each place $v$ of $K$, subvarieties of
		dynamically small height are equidistributed on the associated Berkovich analytic space $\an{X}_v$. 
		We carefully develop all of the arithmetic intersection theory needed to state and prove this 
		theorem, and we present several applications on the non-Zariski density of preperiodic points and of 
		points of small height in field extensions of bounded degree. 
	\end{abstract}

\maketitle

\tableofcontents

 \setlength{\parskip}{3pt}


\section{Introduction}

	A typical arithmetic equidistribution theorem says something like the following: if $X$ is an algebraic
	variety over a global field $K$ that possesses an arithmetic height function $h$, then for each place
	$v$ of $K$ there exists an analytic space $\an{X}_v$ and a measure $\mu_v$ supported on it so 
	that the Galois orbits of any suitably generic sequence of algebraic points with heights tending to 
	zero become equidistributed with respect to $\mu_v$. The primary goal of this article is to prove 
	such a theorem when $K$ is the function field of a smooth curve and $h$ and $\mu_v$ are 
	intimately connected with the dynamics of a polarized endomorphism of $X$. Arithmetic 
	equidistribution theorems of this type abound in the literature of the last decade, beginning with the
	pioneering work of Szpiro/Ullmo/Zhang \cite{Szpiro-Ullmo-Zhang}. For a nice survey, 
	see the recent text by Silverman \cite[Chapter 3.10]{Silverman_Dynamics_Book_2007}.
		
		
	A \textit{(polarized algebraic) dynamical system} $(X, \varphi, L)$ defined over a field $K$ 
	consists of a projective variety $X / K$, an endomorphism $\varphi$ of $X$, and an ample line bundle $L$ on 
	$X$ such that $\varphi^*L \cong L^q$ for some integer $q \geq 2$. The line bundle $L$ is often 
	referred to as a \textit{polarization} of the dynamical system. A preperiodic point $x \in X$ is a closed 
	point such that the forward orbit $\{\varphi^n(x): n=1, 2, \ldots \}$ is finite. In studying the 
	arithmetic of algebraic dynamical systems, one might ask if, for example, the set of $K$-rational 
	preperiodic points is Zariski dense in $X$.  We present a criterion in \S\ref{Section: Applications}  
	for non-Zariski density as a corollary to the main equidistribution theorem of this paper. 
	
	Perhaps the most well-known example of an algebraic dynamical system is $(A, [2], L)$, where $A$ is 
	an abelian variety over $K$, $[2]$ is the multiplication by~2 morphism, and $L$ is a symmetric ample 
	line bundle on $A$ (i.e., $[-1]^*L \cong L$). In this case, torsion points of $A$ are the same as preperiodic 
	points for the morphism $[2]$. Another important example of a dynamical system is 
	$(\PP^d_K, \varphi, O(1))$ for some finite endomorphism $\varphi$ of $\PP^d_K$. 
	If $(z_0: \cdots :z_d)$ are homogeneous coordinates on projective space, then $\varphi$ can be 
	described more concretely by giving $d+1$ homogeneous polynomials $f_i$ of degree $q$ without a
	common zero over $\alg{K}$ and setting $\varphi = (f_0: \cdots: f_d)$. This is in a sense a ``universal dynamical 
	system'' because any other dynamical system can be embedded in one of this
	type. (See the paper of Fakhruddin \cite{Fakhruddin_Self_Maps_2003} for an explanation.) 
	
	To state a version of our theorem, we set the following notation. Let $\BB$ be a proper smooth 
	geometrically connected curve over a field $k$. Let $K=k(\BB)$ be the function field of $\BB$. 
	The places of $K$ --- i.e. equivalence classes of non-trivial valuations on $K$ that are trivial on $k$ --- 
	are in bijective correspondence with the closed points of $\BB$. For a place $v$,
	denote by $K_v$ the completion of $K$ with respect to $v$. 
	The Berkovich analytification of the scheme $X_{K_v}$ will be 
	denoted by $\an{X}_v$. It is a compact Hausdorff topological space with the same number of
	connected components as $X_{K_v}$. For any dynamical system $(X, \varphi, L)$, 
	there exists a measure $\muinv$ on $\an{X}_v$ that is invariant under
	$\varphi$; it reflects the distribution of preperiodic points for the morphism $\varphi$. 	
	Also, each closed point $x$ of $X$ breaks up into a finite set of points $\orb(x)$ in $\an{X}_v$, and
	we can define local degrees $\deg_v(y)$ for each $y \in \orb(x)$ such that 
	$\sum_{y \in \orb(x)} \deg_v(y) = \deg(x)$. To the set $\orb(x)$, we can associate a
	probability measure on $\an{X}_v$ by
		\[
			\frac{1}{\deg(x)}\sum_{y \in \orb(x)} \deg_v(y) \delta_y,
		\]
	There also exists a dynamical height function $h_{\varphi}$ on closed points of $X$ 
	defined via intersection theory, so that 
	$h_\varphi(x)$ measures the ``arithmetic and dynamical complexity'' of the point $x$. For example, 
	$h_{\varphi}(x) = 0$ if $x$ is a preperiodic point for $\varphi$. 
	

\begin{thm} \label{Theorem: Generic Equidistribution}
	Let $(X, \varphi, L)$ be an algebraic dynamical system over the function field $K$. Suppose 
	$\{x_n\}$ is a sequence of closed points of $X$ such that
		\begin{itemize}
			\item No infinite subsequence of $\{x_n\}$ is contained in a proper closed subset 
				of $X$, and
			\item $h_{\varphi}(x_n) \to 0$ as $n \to \infty$.
		\end{itemize} 
	Then for any place $v$ of $K$, and for any continuous function $f: \an{X_v} \to \RR$,
    	we have
      		\benn
         		\lim_{n \to \infty} \frac{1}{\deg(x_n)}\sum_{y \in \orb(x)} \deg_v(y) f(y)  =
         		\int_{\an{X_v}} f d\muinv.
      		\eenn
	That is, the sequence of measures 
	$\left\{\frac{1}{\deg(x_n)}\sum_{y \in \orb(x_n)} \deg_v(y) \delta_y\right\}_n$ 
	converges weakly to $\muinv$. 
\end{thm}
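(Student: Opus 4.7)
The plan is to prove Theorem~\ref{Theorem: Generic Equidistribution} by a variational argument in the spirit of Szpiro--Ullmo--Zhang, adapted to the function field setting. Assume for contradiction that the conclusion fails. Then there exist a continuous function $f: \an{X_v} \to \RR$, a real number $\delta > 0$, and a subsequence of $\{x_n\}$ (still denoted $\{x_n\}$) along which
\[
    \frac{1}{\deg(x_n)} \sum_{y \in \orb(x_n)} \deg_v(y) f(y) \;\leq\; \int_{\an{X_v}} f \, d\muinv - \delta
\]
for every $n$; here I have replaced $f$ by $-f$ if necessary.

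Now I introduce a one-parameter family of adelic metrizations and compare heights to arithmetic intersection numbers. Let $\met{L}$ denote the canonical (Tate) adelically metrized line bundle associated to the dynamical system $(X, \varphi, L)$. By the dynamical formalism developed in the paper, the curvature measure of $\met{L}$ at $v$ is $\muinv$, the induced height on closed points of $X$ coincides with $h_\varphi$, and $\achern{\met{L}}^{\dim X + 1} = 0$. For a small real parameter $\epsilon > 0$, perturb $\met{L}$ only at the place $v$ by multiplying the Tate metric there by $e^{-\epsilon f}$, leaving the metric at every other place unchanged; call the result $\met{L}(\epsilon f)$. Computing heights as sums of local intersection numbers gives
\[
    h_{\met{L}(\epsilon f)}(x_n) \;=\; h_\varphi(x_n) + \frac{\epsilon}{\deg(x_n)} \sum_{y \in \orb(x_n)} \deg_v(y) f(y),
\]
so combining with $h_\varphi(x_n) \to 0$ and the contradictory bound yields
\[
    \liminf_{n \to \infty} h_{\met{L}(\epsilon f)}(x_n) \;\leq\; \epsilon \left( \int_{\an{X_v}} f \, d\muinv - \delta \right).
\]
In the opposite direction, the genericity hypothesis on $\{x_n\}$ together with the function field analogue of Zhang's inequality on successive minima yields
\[
    \liminf_{n \to \infty} h_{\met{L}(\epsilon f)}(x_n) \;\geq\; \frac{\achern{\met{L}(\epsilon f)}^{\dim X + 1}}{(\dim X + 1)\deg_L(X)},
\]
and multilinearity of arithmetic intersection, combined with $\achern{\met{L}}^{\dim X + 1} = 0$ and the identification of the curvature of $\met{L}$ at $v$ as $\muinv$, produces the first-order expansion
\[
    \achern{\met{L}(\epsilon f)}^{\dim X + 1} \;=\; (\dim X + 1)\, \epsilon \deg_L(X) \int_{\an{X_v}} f \, d\muinv + O(\epsilon^2).
\]
Chaining these three displays forces $\epsilon \delta \leq O(\epsilon^2)$, which is false for all sufficiently small $\epsilon > 0$. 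This contradiction proves the theorem.

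The principal obstacle I anticipate is technical rather than conceptual. Because $f$ is merely continuous, the perturbed metric $e^{-\epsilon f}$ is not a model or smooth metric, so $\met{L}(\epsilon f)$ lies outside the naive class of ``algebraic'' adelic metrics on which arithmetic intersection is first defined. Both the Zhang inequality on successive minima and the differentiability of $\achern{\met{L}(\epsilon f)}^{\dim X + 1}$ in $\epsilon$ must therefore be established for semipositive continuous adelic metrics. The usual remedy is to express $\met{L}(\epsilon f)$ as a formal difference of two semipositive continuous adelic metrics, each obtained as a uniform limit of model or smooth metrics, to extend the arithmetic intersection pairing by continuity under such uniform convergence, and then to verify Zhang's inequality by approximation. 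Consequently, the variational step above is short once the arithmetic intersection formalism developed in the earlier sections of the paper has been extended from model metrics to the semipositive continuous setting; the bulk of the work lies in that extension.
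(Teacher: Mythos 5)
There is a genuine gap at the central step of your variational argument: the inequality
\benn
	\liminf_{n \to \infty} h_{\met{L}(\epsilon f)}(x_n) \;\geq\; \frac{\achern{\met{L}(\epsilon f)}^{\dim X + 1}}{(\dim X + 1)\deg_L(X)}
\eenn
is an application of Zhang's theorem on successive minima (Gubler's Lemma~4.1 in the function field case), and that theorem requires the metrized line bundle to be \emph{semipositive}. The perturbed bundle $\met{L}(\epsilon f)$ is integrable but in general not semipositive for any $\epsilon \neq 0$: the canonical metric of a polarized dynamical system is only a uniform limit of nef model metrics, with no strict positivity to absorb the perturbation (indeed at a place of good reduction $\muinv$ is a single point mass). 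Writing $\met{L}(\epsilon f)$ as a formal difference of semipositive bundles, as you propose, suffices to \emph{define} $\achern{\met{L}(\epsilon f)}^{\dim X+1}$ and to get your first-order expansion, but it does not restore the successive-minima lower bound, which simply is not known (and is the wrong statement) for integrable non-semipositive bundles. This is precisely the obstruction that confined the Szpiro--Ullmo--Zhang method to cases where the invariant metric is strictly positive, and it is the reason the paper instead follows Yuan's route: after reducing to a model function $f$ and writing $\OO(f) = \MM_1 \otimes \MM_2^{\vee}$ with $\MM_i$ ample, one checks the numerical hypothesis of Siu's theorem for $\LL^N \otimes \MM_1^e$ versus $\MM_2^e$ (this is where your expansion of $\achern{\met{L}^N(f)}^{d+1}$ reappears, with $N \to \infty$ playing the role of $\epsilon \to 0$), concludes that a large tensor power of $\LL \otimes \OO(f)^e$ has a nonzero global section, and uses genericity of the sequence to ensure the closures $\overline{x_n}$ eventually avoid the divisor of that section. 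Effectivity plus nefness of $\LL$ then gives the lower bound $\liminf_n \int f\, d\mu_{\{x_n\},\met{L},v} \geq 0$ directly, with no appeal to successive minima for the perturbed bundle.

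Two smaller points. First, your diagnosis of the technical difficulty (that $f$ is merely continuous rather than a model function) is off target: the reduction to model functions is routine by density (Gubler's Lemma~2.6 here), and the failure described above occurs already for model functions. Second, in the normalization of this paper the curvature identity carries a residue degree, $\int_{\an{X}_v} f\, \chern{\met{L}}^d = [k(v):k]\deg_L(X)\int f\, d\muinv$, so the factor $[k(v):k]$ must be tracked through your expansions; this is cosmetic, but the successive-minima step is not.
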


	Theorem~\ref{Theorem: Generic Equidistribution} is a simplified form of Theorem~\ref{Thm: 
	Stronger equidistribution}, which is the main result of this paper. In the latter result, we prove
	that generic nets of small subvarieties are equidistributed. See section~\ref{Section: Proof of 
	equidistribution} for the statement. The proof follows the technique  
	espoused in the work of Yuan \cite{Xinyi_Arithmetic_Bigness_2006} using the ``classical''
	version of Siu's Theorem from algebraic geometry.
		
	Several equidistribution theorems in the function field case already exist in the literature. 
	The work of Baker/Hsia \cite{Baker-Hsia_Dynamics} and
	Baker/Rumely \cite{Baker_Rumely_Equidistribution_2005}
	deal with polynomial maps and rational maps on the projective line, respectively. These papers approach 
	equidistribution using nonarchimedean capacity and potential theory. Favre/Rivera-Letelier
	give a quantitative form of equidistribution for rational maps on the projective line, which should
	apply to the function field case despite the fact that they state their results for number fields
	\cite{Favre-Rivera-Letelier_Quant_Equi_2006, Favre-Rivera-Letelier_Quant_Equi_Corrigendum}. 
	Petsche has proved a quantitative equidistribution result for points of small N\'eron-Tate height on elliptic curves
	using Fourier analysis \cite{Petsche_Elliptic_Equi_2007}. Gubler has approached equidistribution using 
	arithmetic intersection techniques (following Szpiro/Ullmo/Zhang). He gives an 
	equidistribution result for abelian varieties that admit a place of totally degenerate reduction 
	\cite{Gubler_Bogomolov_2007}. He has recently and independently proved equidistribution results essentially 
	equivalent to our Theorems~\ref{Theorem: Generic Equidistribution} 
	and~\ref{Thm: Stronger equidistribution} \cite{Gubler_FF_Equi_2008}. The work of Petsche and Gubler holds
	also over the function field of a smooth projective variety equipped with an ample divisor class.

	 In section~\ref{Arithmetic intersection theory over function fields} we state and prove all of 
	the tools of arithmetic intersection theory over function fields necessary for the task at hand. This 
	will only require the use of intersection theory of first chern classes in classical algebraic geometry
	and a small amount of formal and analytic geometry.
	Our approach is novel in that it is developed from global rather than local intersection theory.
	The associated measures and height functions will be introduced in this section as well.
	Section~\ref{Section: Dynamics} is devoted to recalling the construction of invariant metrics for 
	dynamical systems, the dynamical height $h_\varphi$, and the associated invariant measure 
	$\muinv$. We state and prove the main equidistribution theorem in 
	section~\ref{Section: Proof of equidistribution}. Finally, in section~\ref{Section: Applications} 
	we discuss applications of the equidistribution theorem to the distribution of preperiodic points 
	and to the Zariski density of dynamically small points. Section~\ref{Section: Appendix} contains 
	several auxiliary results whose proofs could not be located in the literature.
	 
	\textit{Acknowledgments.}
	I would like to express my gratitude to my advisor Shou-wu Zhang for explaining the big 
	ideas of arithmetic intersection theory to me, to Matt Baker for introducing me to algebraic
	dynamics, and to Xinyi Yuan for the many hours we've passed talking about Arakelov theory. 
	Thanks also go to Brian Conrad and Antoine Ducros for lending their expertise on nonarchimedean
	geometry.
	

\section{Arithmetic Intersection Theory over Function Fields}
\label{Arithmetic intersection theory over function fields}

	In this section we collect some definitions and facts from intersection theory needed for the 
	calculation of heights and for the proof of the main theorem. The basic principle is that heights can 
	be computed via (limits of) classical intersection numbers on models. Much has been written in the 
	literature on the subject of local intersection theory.
	Bloch/Gillet/Soul\'e have studied nonarchimedean local intersection theory by formally defining arithmetic 
	Chow groups using cycles on special fibers of models \cite{Bloch-Gillet-Soule_Arakelov_Theory}. 
	Gubler carries out a very careful study of local 
	intersection theory on special fibers of formal schemes in a more geometric fashion, and then he 
	patches it together into a global theory of heights using his theory of $M$-fields 
	\cite{Gubler_Heights_M_Fields_1997, Gubler_Local_Heights_of_Subvarieties_1998,
	Gubler_Local_Canonical_Heights_2003}. One can avoid most of the technical 
	difficulties that arise in these approaches by working almost exclusively with classical
	intersection theory on models of an algebraic variety; this requires a lemma of Yuan to
	lift data from a local model to a global model (Lemma~\ref{Lemma: Lifting Model Functions}).
	The upshot is that this treatment uses only ideas from algebraic geometry and a small input from 
	formal geometry, and it works for any field of constants $k$. 
	
	Most of these ideas appear in the literature on adelic metrics and height theory. We have endeavored
	to give complete proofs when the literature does not provide one or when the known proof for the number 
	field case requires significant modification.
	
\subsection{Notation and Terminology}
	\subsubsection{}
	When we speak of a variety, we will always mean an integral scheme, separated and of finite 
	type over a field. We do not require a variety to be geometrically integral. Throughout, we fix a 
	field of constants $k$ and a proper smooth geometrically connected $k$-curve $\BB$. At no point do we 
	require that the constant field $k$ be algebraically closed. Denote by $K$ the field of rational functions on $\BB$.
	
	\subsubsection{}
	The field $K$ admits a set of nontrivial normalized valuations that correspond bijectively to the
	closed points of $\BB$. The correspondence associates to a point $v \in \BB$ the valuation $\ord_v$ on the 
	local ring $\OO_{\BB, v}$ at $v$, which is a discrete valuation ring by smoothness. The normalization of $v$
	is such that a uniformizer in $\OO_{\BB,v}$ has valuation~1.  Extend $\ord_v$ to $K$ by additivity. 
	Such a valuation will be called a place of $K$. We will often identify closed points of $\BB$ with 
	places of $K$ without comment unless further clarification is necessary. 
	
	The places of $K$ satisfy a product formula. For $a \in K$, set $|a|_v = \exp\{-\ord_v(a)\}$. Then for any 
	$a \in K^{\times}$, we have
		\[
			\prod_{v \in \BB} |a|_v^{[k(v):k]} = 1,
		\]
	where $k(v)$ is the residue degree of the point $v$. This formula appears more frequently
	in its logarithmic form, in which it asserts that a rational function has the same number of zeros
	as poles when counted with the appropriate weights:
		\benn
			\sum_{v \in \BB} [k(v):k] \ord_v(a) = 0.
		\eenn
	
	\subsubsection{}
	Now let $X$ be a projective variety over $K$.  Given an open subvariety $\UU \subset \BB$, a 
	\textit{$\UU$-model of $X$} consists of the data of a $k$-variety $\XX$, a projective flat 
	$k$-morphism $\XX \to \UU$, and a preferred $K$-isomorphism $\iota: X \simarrow \XX_K$.  In most 
	cases the morphism $\XX \to \UU$ and the isomorphism $\iota$ will be implicit, and we will use them
	to identify $X$ with the generic fiber of $\XX$. If $L$ is a line bundle on $X$, a \textit{$\UU$-model of the 
	pair $(X,L)$} is a pair $(\XX,\LL)$ such that $\XX$ is a $\UU$-model of $X$ and $\LL$ is a line bundle on 
	$\XX$ equipped with a preferred isomorphism $\iota^*\LL|_{\XX_K} \simarrow L$.\footnote{As is customary,
	 we will use the terms ``line bundle'' and ``invertible sheaf'' interchangeably, but we will always work with them 
	as sheaves.} then a \textit{$\UU$-model of the pair $(X,L)$} Again, this 
	isomorphism will often be implicit. We may also say that $\LL$ is a 
	\textit{model of $L$}. To avoid trivialities, if we speak of a $\UU$-model $(\XX, \LL)$ of a pair $(X, L^e)$
	 without extra qualifier, we will implicitly assume that $e \geq 1$. 
	
	For a line bundle $\LL$ on a variety $\XX / k$ with function field $k(\XX)$, a \textit{rational section} of 
	$\LL$ is a global section of the sheaf $\LL \otimes k(\XX)$. Equivalently, a rational section is a choice 
	of a nonempty open set $U \subset \XX$ and a section $s \in \LL(U)$.
	
	Asking $X$ to be projective guarantees the existence of $\BB$-models. For example, let $L$ be a very ample
	line bundle on $X$. Let $X \hookrightarrow \PP^n_K \hookrightarrow \PP^n_{\BB} = \PP^n
	\times \BB$ be an embedding induced by $L$ followed by identifying $\PP^n_K$ with the 
	generic fiber of $\PP^n_{\BB}$, and define $\XX$ to be the Zariski closure of $X$ in $\PP^n_{\BB}$ 
	with the reduced subscheme structure. Let $\LL = \OO_{\XX}(1)$. Then $(\XX, \LL)$ 
	is a $\BB$-model of $(X, L)$. More generally, if $M$ is any line bundle on $X$, then we can write 
	$M = M_1 \otimes M_2^{\vee}$ for some choice of very ample line bundles $M_1$ and $M_2$. The 
	procedure just described gives a means of constructing $\BB$-models $(\XX_i, \MM_i)$ of $(X, M_i)$.
	 By the Simultaneous Model Lemma (Lemma~\ref{Lemma: Simultaneous Model Lemma}), there exists
	a single $\BB$-model $\XX$ of $X$ as well as line bundles $\MM'_i$ on $\XX$ such that $(\XX, \MM'_i)$
	is a $\BB$-model of $(X, M_i)$. Evidently $(\XX, \MM_1' \otimes (\MM_2')^{\vee})$ is a $\BB$-model
	of $(X, M)$. 

	\subsubsection{}
	For each place $v$ of $K$, we let $K_v$ be the completion of $K$ with respect to the valuation $v$.
	Let $\Kvcirc$ be the valuation ring of $K_v$. 
	
	We now briefly recall the relevant definitions from formal and analytic geometry. For full references
	on these topics, see \cite[\S2, 3.4]{Berkovich_Spectral_Theory_1990}, 
	\cite[\S1]{Berkovich_Vanishing_Cycles_Formal} and \cite{Bosch-Lutkebohmert_Formal_Rigid_1}. 
	For each place $v$ of $K$, let $\an{X}_v$ be the Berkovich analytic space associated to the 
	$K_v$-scheme $X_{K_v}$. It is a compact Hausdorff topological space equipped with the structure of 
	a locally ringed space. The space $\an{X}_v$ is covered by compact subsets of the form $\MM(\mathcal{A})$,
	where $\mathcal{A}$ is a strictly affinoid $K_v$-algebra and $\MM(\mathcal{A})$ is its Berkovich
	spectrum. As a set, $\MM(\mathcal{A})$ consists of all bounded multiplicative seminorms on $\mathcal{A}$.
	The definition of the analytic space $\an{X}_v$ includes a natural surjective morphism of locally ringed spaces 
	$\psi_v: \an{X}_v \to X_{K_v}$. If $L$ is a line bundle on $X$, 
	there is a functorially associated line bundle $L_v$ on $\an{X}_v$ defined by
	$\psi_v^*(L \otimes_K K_v)$. 
	
	A continuous metric on $L_v$, denoted $\|\cdot \|$, is a choice of a $K_v$-norm on each fiber of 
	$L_v$ that varies continuously on $\an{X}_v$. More precisely, suppose $\{(U_i, s_i)\}$ is a 
	trivialization of $L_v$ where $\{U_i\}$ is an open cover of $\an{X}_v$ and $s_i$ is a 
	generator of the $\OO_{\an{X}_v}(U_i)$-module $L_v(U_i)$. Then the metric $\|\cdot\|$ is defined
	by a collection of continuous functions $\rho_i: U_i \to \RR_{>0}$ satisfying an appropriate cocycle
	condition by the formula $\|s(x)\| = |\sigma(x)| \rho(x)$, where $s$ is any section of $L_v$ over 
	$U_i$ and $\sigma$ is the regular function on $U_i$ such that $s = \sigma s_i$. Here $|\sigma(x)|$ denotes 
	the value of the seminorm corresponding to the point $x$ at the function $\sigma$.\footnote{
			Several authors (e.g., \cite{Bombieri-Gubler_2006}) speak of \textit{bounded} 
			continuous metrics on $L \otimes \CC_v$ over $X_{\CC_v}$, where $\CC_v$
			is the completion of an algebraic closure of $K_v$. 
			Bounded and continuous metrics in their context are equivalent to our notion of
			continuous metric by compactness of the Berkovich analytic space $\an{X}_v$.
		}
	A \textit{formal metric} $\|\cdot\|$ on the line bundle $L_v$ is one for which there exists a 
	trivialization $\{(U_i, s_i)\}$ so that metric is defined by $\rho_i \equiv 1$. Equivalently, for this
	trivialization we have $\|s(x)\| = |\sigma(x)|$ with $s$ and $\sigma$ as above. 
		
	To an admissible formal $K_v^{\circ}$-scheme $\fX_v$, one can associate in a functorial way 
	its \textit{generic fiber} $\fX_{v, \eta}$, which is an analytic space in the sense of Berkovich. For us, 
	the most important example will be when $\fX_v$ is the formal completion of a proper flat 
	$\Kvcirc$-scheme with generic fiber $X / K_v$. In this setting, properness implies there is a 
	canonical isomorphism $\fX_{v, \eta} \simarrow \an{X}_v$
	\cite[Thm.~A.3.1]{Conrad_Irreducible_Components}. A formal line bundle $\fL_v$ on 
	 $\fX_v$ determines a formal metric on $L_v = \fL_v \otimes K_v$ as follows. 
	Let $\{(\mathfrak{U}_i, s_i)\}$ be a formal trivialization of $\fL_v$. Then the generic fiber functor 
	gives a trivialization of $L_v$, namely $\{(\mathfrak{U}_{i, \eta}, s_i)\}$. We set $\rho_i \equiv 1$. This 
	definition works (i.e., the functions $\rho_i$ transform correctly) because the transition functions 
	for the cover $\{(\mathfrak{U}_{i, \eta}, s_i)\}$ all have supremum norm~$1$
	\cite[\S7]{Gubler_Local_Heights_of_Subvarieties_1998}. 
	
	For an open subscheme $\UU \subset \BB$, a $\UU$-model $(\XX,\LL)$ of  the pair $(X,L^{e})$ 
	determines a family of continuous metrics, one on $L_v$ for each place $v$ of $\UU$. Indeed, let 
	$\widehat{\XX}_v$ be the formal completion of the scheme $\XX_{K_v^{\circ}}$ along its closed 
	fiber, and let $\widehat{\LL}_v$ be the formal completion of $\LL$. As $\XX_{K_v^{\circ}}$ is 
	$\Kvcirc$-flat, we know that $\widehat{\XX}_v$ is flat over $\Kvcirc$, hence admissible. 
	We denote by $\metric_{\LL,v}$ the formal metric on $\widehat{\LL}_v \otimes K_v \cong L_v^e$. 
	A metric $\metric_{\LL,v}^{1/e}$ on $L_v$ can then be given by defining 
	$\|\ell\|^{1/e}_{\LL,v} = \|\ell^{\otimes e}\|_{\LL,v}^{1/e}$ for any local section $\ell$ of $L_v$. 
	
	There is an important subtlety here that is worth mentioning. If $(\XX, \LL)$ is a $\BB$-model of
	$(X, O_X)$ and $v$ is a place of $\BB$, then we get a metric $\metric = \metric_{\LL, v}$ on 
	$O_{\an{X}_v}$. However, $O_X \simarrow O_X^n$ for any positive integer $n$ via the canonical isomorphism
	given by $1 \mapsto 1^{\otimes n}$, and so we just as easily view $(\XX, \LL)$ as  $\BB$-model of 
	$(X, O_X^n)$. The metric induced on $O_{\an{X}_v}$ by this $\BB$-model is given by $\metric^{1/n}$.

	\subsubsection{}
	An \textit{adelic metrized line bundle} $\met{L}$ on $X$ consists of the data of a line bundle 
	$L$ on $X$ and for each place $v \in \BB$ a continuous metric $\metric_{v}$ on 
	the analytic line bundle $L_v$ subject to the following adelic coherence condition:
	there exists an open subscheme $\UU \subset \BB$, and a $\UU$-model $(\XX, \LL)$ of the pair 
	$(X,L^e)$ for some positive integer $e$ such that at all places $v$ in $\UU$ we have equality of the 
	metrics  $\|\cdot\|_v = \|\cdot\|_{\LL,v}^{1/e}$. The adelic metrized line bundle $\met{L}$ will be 
	called \textit{semipositive} if there exists an open subscheme $\UU \subset \BB$, a sequence of
	 positive integers $e_n$ and a sequence of $\BB$-models $(\XX_n, \LL_n)$ 
	 of the pairs $(X,L^{ e_n})$ such that 
		\begin{itemize}
			\item $\LL_n$ is \textit{relatively semipositive} for all $n$: it has nonnegative degree on 
				any curve in a closed fiber of $\XX$;\footnote{Modern algebraic geometers
				would probably call this line bundle \textit{relatively nef}, but we preserve the 
				term ``relatively semipositive'' for historical reasons.}
			\item For each place $v$ of $\UU$, we have equality of the metrics
				 $\|\cdot\|_{v} = \|\cdot\|_{\LL_{n,v}}^{1/e_n}$ for all $n$; and
			\item For each place $v \not \in \UU$, the sequence of metrics
			$\|\cdot\|_{\LL_{n,v}}^{1/e_n}$ converges uniformly to $\|\cdot\|_{v}$ on $\an{X}_v$.
		\end{itemize}
	The distance between two metrics $\metric_{1,v}$ and $\metric_{2,v}$ on $L_v$ 
	is given by 
		\benn
			\dist_v(\metric_{1,v}, \metric_{2,v}) = \max_{x \in \an{X}_v} \left| \log 
			\frac{\|s(x)\|_{1,v}}{\|s(x)\|_{2,v}}\right|, 
		\eenn	
	where $s$ is any local section of $L_v$ that does not vanish at $x$.
	The quotient inside the logarithm is independent of the choice of $s$ and defines a non-vanishing continuous
	function on the compact space $\an{X}_v$. This implies the existence of the maximum.
	A sequence $(\metric_{n,v})_n$ of metrics on $L_v$ is said to converge uniformly to 
	the metric $\metric_{v}$ if $\dist_v(\metric_{n,v}, \metric_{v}) \to 0$ as $n \to \infty$. By abuse
	of notation, in the definition of semipositive metrized line bundle we may say that
	the $\BB$-models $\LL_n$ converge uniformly to $\met{L}$.

	An adelic metrized line bundle is called \textit{integrable} if it is of the form 
	$\met{L_1} \otimes \met{L_2}^{\vee}$ for two semipositive metrized line bundles 
	$\met{L_1}$ and $\met{L_2}$. We denote by $\overline{\Pic}(X)$ the group of 
	integrable adelic metrized line bundles on $X$ (under tensor product). Semipositive metrized line
	bundles form a semigroup inside $\overline{\Pic}(X)$. (This follows from 
	Lemma~\ref{Lemma: Simultaneous Model Lemma} and the fact that the tensor product of nef line
	bundles is nef.)

\subsection{Arithmetic Intersection Numbers} \label{Section: Intersection Numbers}

	Given $d+1$ integrable metrized line bundles $\met{L}_0, \ldots, \met{L}_d$, we wish to 
	define the arithmetic intersection number $\achern{\met{L}_0} \cdots \achern{\met{L}_d}$.
	In the present situation, this is accomplished by approximating the metrics
	on these line bundles using $\BB$-models, performing an intersection calculation on the 
	$\BB$-models, and then passing to a limit. This procedure appears in the work of Zhang
	for the number field setting \cite{Zhang_Small_Points_1995}, and we devote this section to
	proving it works very generally for function fields of transcendence degree one.
	
	Those well acquainted with intersection products in Arakelov theory will find little
	surprising in Theorem~\ref{Theorem: Intersection properties} below and probably nothing 
	new in its proof. However, the literature on the subject appears only to contain these facts and
	their proofs in the case where $K$ is a number field or when $K$ is a function field over
	an algebraically closed field $k$ \cite[\S3]{Gubler_Bogomolov_2007}. Here we have removed 
	the hypothesis that $k$ is algebraically closed, with the only consequence being that a residue 
	degree appears in some of the formulas. The benefit is that the proofs are global and algebraic in 
	nature rather than local and formal. 
	
	For a projective variety $Y$ over a field $k$, we will almost always identify a zero cycle 
	$\sum n_P [P]$ with its degree $\sum n_P[k(P):k]$. For a line bundle $L$ on $Y$, we write 
	$\chern{L}$ to denote the first Chern class valued in $A^1(Y)$, the group of codimension-$1$ cycles on $Y$.
	When $Y$ has dimension $d$, we will write 
	$\deg_{L_1, \ldots, L_d}(Y)$ or $\chern{L_1} \cdots \chern{L_d}$ instead of the more correct 
	form $\deg \left(\chern{L_1} \cdots \chern{L_d}\cdot[Y]\right)$.
	Similarly, we may write $\deg_{L}(Y)$ for $\chern{L}^d$.

\begin{thm} \label{Theorem: Intersection properties}
	Let $X$ be a projective variety of dimension $d$ over the function field $K$. 
	There exists a pairing $\achern{\met{L}_0} \cdots \achern{\met{L}_d}$ on 
	$\overline{\Pic}(X)^{d+1}$ satisfying the following properties:
		\begin{enumerate}
			\item Let $\XX$ be a $\BB$-model of $X$ and let $\LL_0, \ldots, \LL_d$ models on $\XX$ 
				of the line bundles $L_0^{e_0}, \ldots, L_d^{e_d}$ for some positive integers
				$e_0, \ldots, e_d$. If $\met{L}_0, \ldots, \met{L}_d$ are the associated adelic 
				metrized line bundles with underlying algebraic bundles $L_0, \ldots, L_d$,
				then
					\benn
						\achern{\met{L}_0} \cdots \achern{\met{L}_d} = 
						\frac{\chern{\LL_0} \cdots \chern{\LL_d}}{e_0\cdots e_d}.
					\eenn
					
			\item Let $\met{L}_1, \ldots, \met{L}_d$ be semipositive metrized line bundles and
				take  $\met{L}_0$ and $\met{L}_0'$ to be two integrable metrized lined bundles
				with the same underlying algebraic bundle $L_0$. The metrics of $\met{L}_0$
				and $\met{L}_0'$ agree at almost all places, and if $\metric_{0,v}$ and $\metric_{0,v}'$
				are the corresponding metrics at the place $v$, then 
					\benn
						\ba
							\Bigg|\achern{\met{L}_0 \otimes \left(\met{L}_0'\right)^{\vee}}  &
							\achern{\met{L}_1} \cdots \achern{\met{L}_d}\Bigg|   \\
							& \leq \deg_{L_1, \ldots, L_d}(X)\sum_{v \in \BB} [k(v):k] 
							\dist_v\left(\metric_{0,v}, \metric_{0,v}'\right).
						\ea
					\eenn
					
			\item $\achern{\met{L}_0} \cdots \achern{\met{L}_d}$ is symmetric and
				multilinear in $\met{L}_0, \ldots, \met{L}_d$.
			
			\item Let $Y$ be a projective variety over $K$, and suppose $\varphi: Y \to X$ is a 
				generically finite surjective morphism. Then
					\benn
						\achern{\varphi^*\met{L}_0} \cdots \achern{\varphi^*\met{L}_d} = 
						\deg(\varphi) \ \achern{\met{L}_0} \cdots \achern{\met{L}_d}.
					\eenn
		\end{enumerate}
	Moreover, the pairing on $\overline{\Pic}(X)^{d+1}$ is uniquely defined by properties {\em (i)} and {\em (ii)}. 
\end{thm}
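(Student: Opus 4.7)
The plan follows Zhang's construction: first define the pairing on $(d+1)$-tuples of integrable metrized line bundles whose metrics all arise from a common $\BB$-model via formula (i), then extend to arbitrary semipositive bundles by a Cauchy-limit argument, and finally extend multilinearly to all of $\overline{\Pic}(X)^{d+1}$. To check that (i) gives a well-defined value, the Simultaneous Model Lemma allows any two competing model presentations of the same adelic metrics to be dominated by a common refinement $\XX''$; the classical projection formula for birational morphisms together with multilinearity of Chern classes in tensor powers then shows that the ratio $\chern{\LL_0}\cdots\chern{\LL_d}/(e_0\cdots e_d)$ is independent of the presentation.

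The heart of the argument is (ii) in the model case, since everything else follows by continuity. Using the Simultaneous Model Lemma, one reduces to the case where $\met{L}_0$ and $\met{L}_0'$ arise from models $\LL_0, \LL_0'$ of $L_0^e$ on a single $\BB$-model $\XX$, and $\met{L}_1, \ldots, \met{L}_d$ arise from $\LL_1, \ldots, \LL_d$ on $\XX$ with $\LL_i$ a model of $L_i^{e_i}$. The line bundle $\LL_0 \otimes (\LL_0')^{\vee}$ is trivial on the generic fiber, hence is isomorphic to $\OO_{\XX}(D)$ for a vertical Cartier divisor $D = \sum_v D_v$ supported on special fibers. Writing $D_v = \sum_W a_W [W]$ over the reduced irreducible components $W$ of $\XX_v$, and $[\XX_v] = \sum_W m_W [W]$ as a cycle on $\XX$, unwinding the formal trivializations identifies the log-ratio of $\metric_{0,v}$ and $\metric_{0,v}'$ at the Berkovich point $\xi_W$ corresponding to the generic point of $W$ as $\pm a_W/(e m_W)$, and hence yields $|a_W| \leq e m_W \, \dist_v(\metric_{0,v}, \metric_{0,v}')$. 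On the other hand, flatness of $\XX \to \BB$ together with the projection formula for $\pi: \XX \to \BB$ gives
\[
\chern{\LL_1}\cdots\chern{\LL_d}\cdot[\XX_v] = e_1\cdots e_d \, [k(v):k] \, \deg_{L_1,\ldots,L_d}(X),
\]
so expanding
\[
\chern{\LL_0 \otimes (\LL_0')^{\vee}} \chern{\LL_1}\cdots\chern{\LL_d} = \sum_v \sum_W a_W \bigl(\chern{\LL_1}\cdots\chern{\LL_d}\cdot[W]\bigr)
\]
and dividing by $e \cdot e_1 \cdots e_d$ produces the bound of (ii).

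With the model case of (ii) established, the pairing extends to arbitrary semipositive tuples: given approximating model sequences for each $\met{L}_i$, (ii) applied one coordinate at a time together with the finiteness of the set of bad places implies the model intersection numbers form a Cauchy sequence, whose limit is independent of the approximating sequences by a direct comparison. Extending multilinearly to all of $\overline{\Pic}(X)^{d+1}$, property (ii) passes to the limit by continuity, properties (i) and (iii) are immediate from the construction, and (iv) reduces to the classical projection formula on a common $\BB$-model dominating both $X$ and $Y$, preserved under limits since pullbacks of model data remain model data. Uniqueness from (i) and (ii) is then clear: (i) determines the pairing on bundles with model metrics, and (ii) forces the value on any integrable bundle by continuous approximation. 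The main obstacle is the explicit dictionary in the second paragraph between the vertical Cartier coefficient $a_W$ and the formal-metric distance: it requires careful tracking of three different normalizations (the uniformizer normalization defining $v$, the exponent $e$ relating $L_0$ and $L_0^e$, and the residue degree $[k(v):k]$ converting fiber intersection degrees over $k(v)$ into degrees over the base field $k$), and getting all three to mesh correctly is precisely what produces the residue-degree weights in (ii).
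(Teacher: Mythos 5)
Your proposal follows the paper's argument essentially verbatim: Zhang's three-step construction (define via models, extend to semipositive bundles by a Cauchy/telescoping-sum argument, extend multilinearly), with the model case of (ii) established by the same dictionary between the coefficients of the vertical divisor of $\LL_0\otimes(\LL_0')^{\vee}$ and the values of the formal metric at the points of $\an{X}_v$ lying over the generic points of the components of $\XX_v$, combined with the conservation-of-number identity $\chern{\LL_1}\cdots\chern{\LL_d}\cdot[\XX_v]=e_1\cdots e_d\,[k(v):k]\,\deg_{L_1,\ldots,L_d}(X)$. The only step you elide is the preliminary reduction to a \emph{normal} $\BB$-model (via normalization, which preserves both the intersection numbers and the metric distances): without normality the local ring at the generic point of a component $W$ need not be a discrete valuation ring, so the point $\xi_W$ and the identification of $-\log\|1(\xi_W)\|$ with $a_W/(e\,m_W)$ are not available as stated.
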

	
	The proof of this theorem will occupy the remainder of section~\ref{Section: Intersection Numbers}.


\subsubsection{Preliminary Lemmas}

	In order to define the pairing and show it is well-defined, we need a number of preliminary
	facts. The following result allows one to take metrics induced from line bundles
	on several different models of $X$ and consolidate the data on a single $\BB$-model of $X$. 
	
\begin{lem}[Simultaneous Model Lemma] \label{Lemma: Simultaneous Model Lemma}
	Let $(\XX_1,\LL_1), \ldots, (\XX_n,\LL_n)$ be $\BB$-models of the pairs 
	$(X,L_1), \ldots, (X,L_n)$, respectively. Then there exists a single
	$\BB$-model $\XX$ along with $\BB$-morphisms $\pr_i:\XX \to \XX_i$ that restrict to 
	isomorphisms on the generic fiber. For each $i$, the line bundle $\LL_i' = \pr_i^*\LL_i$ is a model
	of $L_i$ such that for each place $v$ of $K$, the corresponding metrics on $L_{i,v}$ satisfy 
	$\|\cdot\|_{\LL_{i,v}} = \|\cdot\|_{\LL_{i',v}}$. If $\LL_i$ is nef on $\XX_i$, then $\LL_i'$ is nef 
	on $\XX$.
\end{lem}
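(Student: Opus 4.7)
The plan is to construct $\XX$ as the scheme-theoretic closure of the diagonal in the fibered product $\YY = \XX_1 \times_\BB \cdots \times_\BB \XX_n$, take $\pr_i$ to be the restriction of the $i$-th projection, and set $\LL_i' = \pr_i^*\LL_i$. The substantive point is the comparison of formal metrics; everything else is bookkeeping.

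First I form $\YY$, which is projective over $\BB$ with generic fiber $X \times_K \cdots \times_K X$ after identifying each $(\XX_i)_K$ with $X$ via $\iota_i$. The diagonal $\Delta(X) \into \YY_K$ is closed because $X / K$ is separated. Let $\XX$ be the closure of $\Delta(X)$ in $\YY$ endowed with the reduced subscheme structure. Then $\XX$ is integral, projective over $\BB$, and dominates $\BB$; since $\BB$ is a smooth curve, integrality plus dominance forces $\XX \to \BB$ to be flat, so $\XX$ is a $\BB$-model of $X$ with generic fiber $\Delta(X) \cong X$. The composition $\pr_i \colon \XX \into \YY \to \XX_i$ is a $\BB$-morphism whose restriction to generic fibers is the $i$-th projection from the diagonal, hence an isomorphism. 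The pullback $\LL_i' = \pr_i^*\LL_i$ restricts on the generic fiber to $L_i$, so $(\XX, \LL_i')$ is a $\BB$-model of $(X, L_i)$.

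For the metric equality, fix a place $v$ and pass to formal completions: the induced morphism $\widehat{\pr}_{i,v} \colon \widehat{\XX}_v \to \widehat{\XX_i}_v$ has generic fiber equal to the identity on $\an{X}_v$ under the canonical identifications $\widehat{\XX}_{v, \eta} \simarrow \an{X}_v$ and $\widehat{\XX_i}_{v, \eta} \simarrow \an{X}_v$. Given a formal trivialization $\{(\mathfrak{U}_\alpha, s_\alpha)\}$ of $\widehat{\LL}_{i,v}$, its pullback $\{(\widehat{\pr}_{i,v}^{-1}(\mathfrak{U}_\alpha), \widehat{\pr}_{i,v}^*s_\alpha)\}$ is a formal trivialization of $\widehat{\LL}'_{i,v}$; on the analytic generic fibers the two trivializations correspond under the identity, and since in each description the formal metric is defined by $\rho_\alpha \equiv 1$, both induce the same metric on $L_{i,v}$. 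Finally, if $\LL_i$ is nef on $\XX_i$ and $C$ is a curve in a closed fiber of $\XX$, then $\pr_i(C)$ lies in the corresponding closed fiber of $\XX_i$, and the projection formula yields $\chern{\LL_i'} \cdot C = \deg(C / \pr_i(C)) \, \chern{\LL_i} \cdot \pr_i(C) \geq 0$ (understood as $0$ when $\pr_i(C)$ is a point); hence $\LL_i'$ is nef on $\XX$.

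The one step requiring genuine care is the functorial compatibility of formal metrics with pullback along a $\BB$-morphism that is an isomorphism on generic fibers: one must verify that the induced map on formal completions has generic fiber equal to the identity of $\an{X}_v$, so that the cocycle-trivial formal trivializations match. Once that compatibility is in hand, the construction of $\XX$ and the preservation of nefness are mechanical.
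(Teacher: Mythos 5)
Your proof follows the paper's argument essentially verbatim: take the closure of the diagonal in the fibered product over $\BB$, pull back the line bundles along the projections, compare formal metrics via the induced map on formal completions (which is the identity on analytic generic fibers, so the cocycle-trivial trivializations match), and use the projection formula for nefness. The only quibble is that nefness must be checked against \emph{all} curves in $\XX$, not just those lying in closed fibers as you state; your projection-formula computation applies unchanged to an arbitrary curve $C \subset \XX$, so nothing substantive is missing.
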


\begin{proof}

	Suppose that $n=2$; the general case is only notationally more difficult. 	
	Consider the following commutative diagram:
			\benn	
				\xymatrix{
				X \ar@/^1pc/ [drr] \ar[dr]^{\Delta} \ar@/_1pc/[ddr] & & \\
				&  \XX_1 \times_\BB \XX_2 \ar[r]^{ \ \ \pr_2} 
				\ar[d]^{\pr_1} & \XX_2 \ar[d] \\
				& \XX_1 \ar[r] & \BB \\
				}
			\eenn
	The maps $\XX_i \to \BB$ are the structure morphisms.  
	The map $\Delta$ is the diagonal embedding of $X$ in the generic fiber
	of $\XX_1 \times_\BB \XX_2$, which is just $X \times_K X$, and the square
	is the fiber product. Let $\XX$ be the Zariski closure of $\Delta(X)$ in 
	$\XX_1 \times_\BB \XX_2$ with the reduced subscheme structure; it is a $\BB$-model
	of $X$ via the diagonal map as preferred isomorphism on the generic fiber. When 
	restricted to $\Delta(X)$, the two projections $\pr_i$ are isomorphisms, and so they 
	induce isomorphisms between the generic fibers of $\XX$ and $\XX_i$.
	Define $\LL_i' = \pr_i^*\LL_i|_{\XX}$. 
		
	The metrics on a line bundle $\LL'$ are unchanged by pullback through a $\BB$-morphism 
	$\XX \to \XX'$ that restricts to an isomorphism on the generic fiber. Indeed, completing at the closed fiber
	over $v$ gives a morphism of admissible formal $\Kvcirc$-schemes $f: \fX \to \fX'$. Over open sets 
	$\Spf \mathcal{A}' \subset \fX'$ and $\Spf \mathcal{A} \subset f^{-1}(\Spf \mathcal{A}') \subset \fX$
	where the line bundles $\fL'$ and $f^*\fL'$ are trivial, flatness of these algebras 
	over $K_v^{\circ}$ implies that we have a commutative diagram of inclusions:
		\benn
				\xymatrix{
				\mathcal{A}' \ar[r]^{\alpha} \ar[d] & \mathcal{A} \ar[d]\\  
				\mathcal{A}' \otimes_{\Kvcirc} K_v \ar[r] &  \mathcal{A} \otimes_{K_v^{\circ}} K_v  \\
				}	
		\eenn
	 Over these open sets, the formal metrics on $\fL'$ and on $f^*\fL'$ are given at $x$ by 
	evaluation \cite[Lemma~7.4]{Gubler_Local_Heights_of_Subvarieties_1998}; 
	i.e., for any local section $s$ of $\fL'$ defined near $x$ corresponding to an element 
	$\sigma \in \mathcal{A}'$, we have
		\[
			\|f^*(s)(x)\|_{f^*\fL'} = |\alpha(\sigma)(x)| = |\sigma(x)| = \|s(x)\|_{\fL'},
		\]
	where the middle equality follows because $f$ is an isomorphism on the generic fiber.

	The final claim of the lemma is simply the fact that the pullback of a nef line bundle is nef
	(which follows from the projection formula for classical intersection products).
\end{proof}

	Next we show that intersection numbers vary nicely in fibers over the base curve $\BB$. This
	is well-known for fibers over the closed points of $\BB$ 
	(cf. \cite[\S10.2]{Fulton_Intersection_Theory_1998}, ``Conservation of Number''), but we are 
	also interested in comparing with the generic fiber. 
		
\begin{lem} \label{Lemma: Conservation of Number}
	Let $X$ be a projective variety over $K$ of dimension $d$, $L_1, \ldots, L_d$ line bundles on $X$, 
	$\pi:\XX \to \BB$ a $\BB$-model of $X$, and $\LL_1, \ldots, \LL_d$ models of $L_1, \ldots, L_d$ on
	$\XX$. Then for any closed point $v \in \BB$, we have the equality
		\benn
			[k(v):k]\deg_{L_1, \ldots, L_d}(X) = \chern{\LL_1}\cdots \chern{\LL_d}\cdot [\XX_v],
		\eenn
	where $\XX_v$ is the (scheme-theoretic) fiber over $v$.		
\end{lem}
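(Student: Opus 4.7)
The plan is to use flatness of the model together with the projection formula to express $\chern{\LL_1}\cdots\chern{\LL_d}\cdot[\XX_v]$ as a single integer $n$ times $[k(v):k]$, and then to identify $n$ with $\deg_{L_1,\ldots,L_d}(X)$ by restricting to the generic fiber. Flatness of the projective morphism $\pi\colon\XX\to\BB$ gives the equality of cycle classes $[\XX_v]=\pi^*[v]$ in $A_d(\XX)$. Setting $\alpha=\chern{\LL_1}\cdots\chern{\LL_d}\cap[\XX]\in A_1(\XX)$ and using that $\BB$ is irreducible of dimension one (so $A_1(\BB)=\ZZ\cdot[\BB]$), the proper pushforward satisfies $\pi_*\alpha=n\cdot[\BB]$ for some integer $n$ independent of $v$. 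The projection formula then gives
$$
\chern{\LL_1}\cdots\chern{\LL_d}\cdot[\XX_v]
=\deg\bigl(\pi_*\alpha\cdot[v]\bigr)
=n\cdot[k(v):k].
$$

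To identify $n$ with $\deg_{L_1,\ldots,L_d}(X)$, I would restrict $\alpha$ to the generic fiber $X=\XX_\eta$. A one-dimensional subvariety $Z\subset\XX$ either sits inside some closed fiber (and is then killed by $\pi_*$) or dominates $\BB$, in which case $\pi|_Z$ is generically finite and $\pi_*[Z]=[k(Z):K]\cdot[\BB]$. The dominating components are precisely the closures of the closed points of $X$, and functoriality of Chern classes under the open immersion $X\hookrightarrow\XX$ gives $\alpha|_X=\chern{L_1}\cdots\chern{L_d}\cap[X]\in A_0(X)$. Summing the weighted contributions over the dominating components then identifies $n$ with the $K$-degree of this zero-cycle, namely $\deg_{L_1,\ldots,L_d}(X)$.

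The main obstacle is carrying out the final identification of $n$ cleanly, since one must verify that the restriction of $\alpha$ to the open subscheme $X\subset\XX$ matches the $K$-intersection class on $X$ and that summing the $[k(Z):K]$-weighted contributions from dominating components recovers its $K$-degree. A cleaner alternative avoids analyzing $\alpha$ directly and instead invokes Snapper's theorem together with the constancy of Hilbert polynomials in proper flat families: the polynomial $\chi_{k(w)}(\XX_w,\LL_1^{m_1}\otimes\cdots\otimes\LL_d^{m_d}|_{\XX_w})$ is independent of $w\in\BB$, so the $k(v)$-intersection number on $\XX_v$ equals the $K$-intersection number on $X$, and converting the $k(v)$-degree of a zero-cycle into a $k$-degree introduces exactly the factor $[k(v):k]$.
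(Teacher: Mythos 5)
Your proposal is correct, but it takes a genuinely different route from the paper. The paper argues by induction on $d=\dim X$: it picks a rational section $s_d$ of $\LL_d$, splits $[\Div(s_d)]$ into horizontal and vertical parts, kills the vertical part against $[\XX_v]=\pi^*[v]$ by moving $[v]$ by linear equivalence on $\BB$, and then applies the induction hypothesis to each horizontal prime divisor (whose generic fiber is a prime divisor of $X$ with the same local ring); the base case $d=0$ is the projection formula for a finite cover of curves. You instead give a one-shot argument: push the $1$-cycle class $\alpha=\chern{\LL_1}\cdots\chern{\LL_d}\cap[\XX]$ down to $A_1(\BB)=\ZZ\cdot[\BB]$, use the projection formula to turn the intersection with $\pi^*[v]$ into $n\,[k(v):k]$, and identify $n$ with $\deg_{L_1,\ldots,L_d}(X)$ either by restricting $\alpha$ to the generic fiber or via Snapper polynomials and constancy of Euler characteristics in the flat proper family. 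Your version is shorter and makes the ``conservation of number'' content transparent ($n$ is manifestly independent of $v$), and the Snapper/Hilbert-polynomial variant is the cleanest, since it bypasses cycle-level bookkeeping entirely; the paper's induction, by contrast, stays entirely within elementary divisor manipulations and never needs the restriction map on Chow groups to the generic fiber. One small imprecision: $X=\XX_K\hookrightarrow\XX$ is not an open immersion but the inclusion of the fiber over the generic point; the compatibility of $\alpha\mapsto\alpha|_X$ with Chern class operations should be justified either by viewing $A_*(X)$ as the colimit of $A_*(\XX_\UU)$ over open $\UU\subset\BB$ (using the localization sequence and genuine open immersions $\XX_\UU\hookrightarrow\XX$) or by flat pullback along $\XX_K\to\XX$ --- a standard point, and one your Snapper alternative avoids altogether.
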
	

\begin{proof}
	As $\BB$ is regular, we may speak of its Cartier and Weil divisors interchangeably. Note that 
	$[\XX_v]$ is the cycle associated to the Cartier divisor $\pi^*[v]$. Indeed, each is cut out on
	$\XX$ by the image of a local equation for the point $v$ under the map $\OO_{\BB} \to \OO_{\XX}$.
		
	Now we proceed by induction on $d = \dim X$. If $d=0$, then $X = \Spec F$ corresponds to 
	a finite extension of fields $F / K$. Also, $\pi:\XX \to \BB$ is a proper surjection of curves. 
	Whence
		\benn
			\ba
				\deg([\XX_v]) = \deg(\pi^*[v]) &= \deg(\pi_*\pi^*[v]) \\
				&= [k(\XX):k(\BB)]\deg([v]) \qquad \text{ by the projection formula} \\
				&= [F:K][k(v):k], 
			\ea
		\eenn
	which is exactly what we want.
	
	Next assume the result holds for all $K$-varieties of dimension at most $d-1$, and let $\dim X = d$.
 	Let $s_d$ be a rational section of $\LL_d$. Write $[\Div(s_d)] =D_h + D_f$, where $D_h$ is 
	horizontal and $D_f$ is vertical on $\XX$. Then $D_f \cdot [\XX_v] = D_f \cdot [\pi^*[v]] = 0$ in 
	the Chow group because we may use linear equivalence on $\BB$ to push $\pi^*[v]$ away from the 
	support of $D_f$. In the next computation, we use the letter $\YY$ to denote an arbitrary horizontal
	prime divisor on $\XX$, and we set $Y = \YY_K$, the generic fiber of $\YY$. This gives a bijective 
	correspondence between horizontal prime divisors on $\XX$ and prime divisors on $X$. 
	Moreover $O_{X,Y} = \OO_{\XX, \YY}$ for any such prime divisor, and $\codim(Y,X) = \codim(\YY,\XX)$.
	Now we compute:
		\benn
			\ba
				\chern{\LL_1}\cdots\chern{\LL_d}\cdot[\XX_v] &= 
				\chern{\LL_1} \cdots \chern{\LL_{d-1}} \cdot[\pi^*[v]] \cdot [\Div(s_d)] \\
				&= \chern{\LL_1} \cdots \chern{\LL_{d-1}} \cdot[\pi^*[v]] \cdot D_h \\
				&= \sum_{\substack{
				{ \YY \subseteq \supp(D_h)} \\ \codim(\YY, \XX) = 1}}
				\ord_{\YY} (s_d) \chern{\LL_1|_{\YY}} \cdots 
				\chern{\LL_{d-1}|_{\YY}} \cdot[\pi^*[v]|_{\YY}] \\
				&= [k(v):k] \sum_{\substack{
				{ Y \subseteq \supp(D_h \cap {X})} \\ \codim(Y, X) = 1 }}
				\ord_{Y} (s_d|_X)\deg_{L_1|_Y, \ldots, L_{d-1}|_Y}
				(Y) \\
				&= [k(v):k] \chern{L_1} \cdots \chern{L_{d-1}}\cdot[\Div(s_d|_X)] \\
				&= [k(v):k] \deg_{L_1, \ldots, L_d}(X).	
			\ea
		\eenn
	In the third to last equality we applied the induction hypothesis. This completes the proof.  
\end{proof}

	The next lemma allows us to see what happens to metrics after pullback through a morphism of $\BB$-models. 

\begin{lem} \label{Lemma: Pullback of metrics}
	Let $\varphi: Y \to X$ be a morphism of projective $K$-varieties. Let $L$ be a line bundle
	on $X$ and $(\XX, \LL)$  a $\BB$-model of the pair $(X,L)$. 
		\begin{enumerate}
			\item There exists a $\BB$-model $\YY$ of $Y$ and a $\BB$-morphism $\lift{\varphi}: \YY \to \XX$
				 such that $\lift{\varphi}_K = \varphi$. 
				 
			\item For any $\BB$-morphism $\lift{\varphi}$ as in $\mathrm{(i)}$, 
				the pair $(\YY, \lift{\varphi}^*\LL)$ is a $\BB$-model of the pair
				 $(Y, \varphi^*L)$, and for each $v$ we have the equality of metrics 
				 $\varphi^*\metric_{\LL, v} = \metric_{\lift{\varphi}^*\LL, v}.$
		\end{enumerate}
\end{lem}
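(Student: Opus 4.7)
My plan is to handle the two parts in sequence, constructing the model by a graph-closure trick and then reducing the metric equality to the formal-geometric computation already used in the proof of the Simultaneous Model Lemma.

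For part (i), the strategy is to start with any $\BB$-model $\YY_0$ of $Y$ (which exists since $Y$ is projective over $K$) and then modify it to admit the required morphism to $\XX$. Form the fiber product $\YY_0 \times_{\BB} \XX$, which is projective over $\BB$, and consider the graph morphism $(\mathrm{id}_Y, \varphi) : Y \to Y \times_K X = (\YY_0 \times_\BB \XX)_K$. I will take $\YY$ to be the Zariski closure of the image with the reduced subscheme structure. Then $\YY$ is integral with generic fiber $Y$, so it dominates $\BB$; because $\BB$ is a smooth curve, dominance forces flatness, and $\YY \to \BB$ is projective as a closed subscheme of a projective $\BB$-scheme. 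The first projection $\YY \to \YY_0$ restricts to the identity on generic fibers, so $\YY$ is a legitimate $\BB$-model of $Y$, and the second projection provides the desired $\BB$-morphism $\lift{\varphi}: \YY \to \XX$ with $\lift{\varphi}_K = \varphi$.

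For part (ii), the model property is immediate: the pullback $\lift{\varphi}^*\LL$ is a line bundle on $\YY$ whose generic fiber is $\lift{\varphi}_K^* \LL_K = \varphi^* L$, so $(\YY, \lift{\varphi}^*\LL)$ is a $\BB$-model of $(Y, \varphi^*L)$. The nontrivial step is the equality of formal metrics at each place $v$. Base change to $\Kvcirc$ and formal completion along the closed fibers is functorial, producing a morphism of admissible formal $\Kvcirc$-schemes $\widehat{\lift{\varphi}}_v: \widehat{\YY}_v \to \widehat{\XX}_v$ whose analytic generic fiber is (under the canonical identifications $\widehat{\YY}_{v,\eta} \simarrow \an{Y}_v$ and $\widehat{\XX}_{v,\eta} \simarrow \an{X}_v$ given by properness) exactly $\varphi^{\mathrm{an}}_v$. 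Moreover $\widehat{\lift{\varphi}}_v^*\widehat{\LL}_v = \widehat{\lift{\varphi}^*\LL}_v$ since pullback of line bundles commutes with formal completion.

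To verify the metric identity, I work locally: choose a formal trivialization $\{(\mathfrak{U}_i, s_i)\}$ of $\widehat{\LL}_v$ on an affine open cover $\mathfrak{U}_i = \Spf \mathcal{A}_i'$; pulling back produces a trivialization $\{(\widehat{\lift{\varphi}}_v^{-1}(\mathfrak{U}_i), \lift{\varphi}^* s_i)\}$ of $\widehat{\lift{\varphi}^*\LL}_v$, which I can refine by affine opens $\Spf \mathcal{A}_{ij} \subset \widehat{\lift{\varphi}}_v^{-1}(\mathfrak{U}_i)$ with algebra maps $\alpha_{ij}: \mathcal{A}_i' \to \mathcal{A}_{ij}$. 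Exactly as in the proof of the Simultaneous Model Lemma, for a local section $\sigma \cdot s_i$ of $\LL_v$ and a point $y \in \widehat{\YY}_{v,\eta}$ with image $x = \lift{\varphi}_{v,\eta}(y)$, the formal metric pairing gives
\benn
\|\lift{\varphi}^*(\sigma s_i)(y)\|_{\lift{\varphi}^*\LL, v} = |\alpha_{ij}(\sigma)(y)| = |\sigma(x)| = \|(\sigma s_i)(x)\|_{\LL, v},
\eenn
which is precisely the definition of $\varphi^*\metric_{\LL,v}$ evaluated at $y$.

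The main obstacle is part (i), specifically verifying that the graph-closure $\YY$ is a genuine $\BB$-model (integral, $\BB$-flat, projective with generic fiber $Y$); everything else is routine, since the metric compatibility in part (ii) is structurally identical to the pullback computation already carried out in Lemma~\ref{Lemma: Simultaneous Model Lemma} and uses no new input beyond the compatibility of formal completion with base change and pullback.
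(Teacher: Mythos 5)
Your proposal is correct and follows essentially the same route as the paper: part (i) is the identical graph-closure construction inside $\YY_0 \times_{\BB} \XX$ (you merely make explicit the flatness-from-dominance-over-a-smooth-curve point that the paper leaves implicit), and part (ii) is the same appeal to compatibility of formal completion and the generic-fiber functor with pullback, with the affinoid-level computation the paper itself carries out in the proof of the Simultaneous Model Lemma. No gaps.
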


\begin{proof}
	Let $\YY_0$ be any $\BB$-model of $Y$. Define $j_X: X \hookrightarrow \XX$ to be the 
	inclusion of the generic fiber (always implicitly precomposed with the preferred isomorphism
	$\iota: X \simarrow \XX_K$), and similarly for $j_Y: Y \hookrightarrow \YY_0$. 
	Let $\Gamma_{\varphi}: Y \to Y \times_K X$ be the graph morphism. 
	Consider the commutative diagram
		\benn	
			\xymatrix{
				Y \ar@/^1pc/ [drr]^{j_X \circ \varphi} \ar[dr]^{\lift{\Gamma}_\varphi
			 	} \ar@/_1pc/[ddr]_{j_Y} & & \\
				&  \YY_0 \times_{\BB} \XX \ar[r]^{\ \ \pr_2} \ar[d]^{\pr_1} & \XX \ar[d] \\
				& \YY_0 \ar[r] & {\BB} \\
			}
		\eenn
	where $\lift{\Gamma}_\varphi = (j_Y \times j_X) \circ\Gamma_\varphi$. 
	Define $\YY$ to be the Zariski closure of 
	$\lift{\Gamma}_\varphi(Y)$ in $\YY_0 \times_{\BB} \XX$ with the 
	reduced subscheme structure, and give it the obvious structure as a flat proper $\BB$-scheme. 
	Let $\lift{\varphi} = \pr_2|_{\YY}$. The graph morphism
	$\Gamma_\varphi$ gives the preferred isomorphism between $Y$ and the generic fiber of $\YY$.
	With this identification, it is evident that $\lift{\varphi}_K = \varphi$.
	
	For any $\lift{\varphi}: \YY \to \XX$ such that $\lift{\varphi}_K = \varphi$, we find that 
	$(\YY, \lift{\varphi}^*\LL)$ is a $\BB$-model of $(Y, \varphi^*L)$ because passage to the generic
	fiber commutes with pullback of line bundles.
	
	The final statement of the lemma is a consequence of the compatibility of the generic fiber functor
	for admissible formal schemes and the pullback morphism. Let 
	$\widehat{\varphi}_v:\widehat{\XX}_v \to \widehat{\YY}_v$ be the morphism induced between the
	formal completions of $\XX$ and $\YY$, respectively, along their closed fibers over $v$, and let 
	$\widehat{\LL}_v$ be the formal completion of $\LL$. The metric $\metric_{\LL, v}$ is determined by
	a formal trivialization $\{(\mathfrak{U}_i, s_i)\}$ of $\widehat{\LL}_v$ and a collection of functions
	$\rho_i: \mathfrak{U}_{i, \eta} \to \RR_{>0}$. 
	(Recall that $\mathfrak{U}_{i, \eta}$ is the generic fiber of $\mathfrak{U}_i$, and that 
	$\{\mathfrak{U}_{i, \eta}\}$ is a cover of $\an{X}_v$.) If $\an{\varphi}_v: \an{Y}_v \to \an{X}_v$ is 
	the induced morphism between analytic spaces, we find that 
	$(\an{\varphi}_v)^{-1}(\mathfrak{U}_{i, \eta}) = \widehat{\varphi}_v^{-1}(\mathfrak{U}_i)_\eta$; i.e., 
	pullback commutes with formation of the generic fiber. Therefore both metrics 
	$\varphi^* \metric_{\LL, v}$ and $\metric_{\lift{\varphi}^*\LL, v}$ are given by the cover 
	$\{((\an{\varphi}_v)^{-1}(\mathfrak{U}_{i, \eta}), (\an{\varphi}_v)^*(s_i) )\}$ and the functions
	$\rho_i \circ \an{\varphi}_v : (\an{\varphi}_v)^{-1}(\mathfrak{U}_{i, \eta}) \to \RR_{>0}$.
\end{proof}

	Next we prove an estimate that indicates the dependence of intersection numbers for line
	bundles on the metrics induced by them. In the course of the proof we shall need to relate
	special values of the metric on a section of the model to the orders of vanishing of the section. 
	We recall now how this works. Let $(\XX, \LL)$ be a $\BB$-model of $(X,O_X)$ with $\XX$ normal, and let
	$s$ be a rational section of $\LL$ that restricts to the section $1$ on the generic fiber. Write 
	$[\Div(s)] = \sum_v \sum_j \ord_{W_{v,j}}(s) \  [W_{v,j}]$, where 
	$\{W_{v,j}\}_j$ are the distinct irreducible components of the fiber $\XX_v$ over the point $v$. 
	Let us also write $[\XX_v] = \sum_j m(v,j) [W_{v,j}]$ for some positive
	integers $m(v,j)$. 
	
	There exists a surjective reduction map $r: \an{X}_v \to \XX_v$, and for 
	each component $W_{v,j}$, there is a unique point $\xi_{v,j} \in \an{X}_v$ mapping to the 
	generic point of $W_{v,j}$. See \cite[\S1]{Berkovich_Vanishing_Cycles_Formal} for the construction
	of the reduction map and an argument that shows its image is closed. We will now give an argument
	to conclude that its image contains the generic points of $\XX_v$.  
	
	The fiber $\XX_v$ is unchanged if we replace $\XX$ by $\XX \times_{\BB} \Spec \OO_{\BB, v}$. 	
	Let $\eta_{v,j}$ be the generic point of $W_{v,j}$, and let $\Spec A$ be an affine open subscheme containing 
	$\eta_{v,j}$. Set $\hat{A}$ to be the $\mm_v$-adic completion of $A$,
	where $\mm_v$ is the maximal ideal of $\OO_{\BB,v}$, and set 
	$\mathcal{A} = \hat{A} \otimes_{\Kvcirc} K_v$ to be the corresponding strict $K_v$-affinoid algebra. 
	Define a multiplicative seminorm on $A$ by 
		\benn
			|a| = \begin{cases}
					\exp\left( -\ord_{W_{v,j}}(a) / m(v,j)\right), & a \not= 0 \\
					0, & a=0.
				\end{cases}
		\eenn
	Extending $|\cdot|$ to $\hat{A}$ by continuity and then to $\mathcal{A}$ by taking fractions gives a
	bounded multiplicative $K_v$-seminorm on $\mathcal{A}$; it corresponds to a point 
	$\xi_{v,j} \in \MM(\mathcal{A}) \subset \an{X}_v$ such that $r(\xi_{v,j}) = \eta_{v,j}$. Uniqueness
	follows from the fact that any $x \in r^{-1}(\eta_{v,j})$ induces a valuation on $\Frac(A)$ whose valuation
	ring dominates $\OO_{\XX, \eta_{v,j}}$. But by normality, the two valuation rings must coincide, and hence
	$x = \xi_{v,j}$.
	
	Finally note that if the rational section $s$ of $\LL$ corresponds to a rational function $\sigma \in \Frac(A)$, 
	then the definitions immediately imply that 
		\be \label{Equation: Model Function value versus order function}
			-\log \|1(\xi_{v,j})\|_{\LL,v}  = -\log |\sigma(\xi_{v,j})| 
			= \frac{\ord_{W_{v,j}}(s)}{m(v,j)}.
		\ee
	Compare with \cite[2.3]{Chambert-Loir_Measures_2005}.
	
\begin{lem} \label{Lemma: Metric bound on intersections}
	Let $\XX$ be a $\BB$-model of $X$, and suppose $\LL_1, \ldots, \LL_d$ on $\XX$ are relatively
	semipositive models of line bundles $L_1, \ldots, L_d$ on $X$. Let $L_0$ be another line bundle on
	$X$ and $\LL_0$ and $\LL_0'$ two models of $L_0$. Then the metrics on $L_0$ induced by 
	$\LL_0$ and $\LL_0'$ differ at only finitely many places $v$, and we have
		\benn
			\left|\chern{\LL_0 \otimes (\LL_0')^{\vee}} \chern{\LL_1}\cdots \chern{\LL_d} \right|
				\leq \deg_{L_1, \ldots, L_d}(X) \sum_{v \in \BB} [k(v):k] 
				\dist_v\left(\metric_{\LL_{0,v}}, \metric_{\LL_{0,v}'}\right).
		\eenn
\end{lem}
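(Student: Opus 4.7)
The plan is to reduce to a single normal $\BB$-model $\XX$ carrying every line bundle in sight, and then to interpret the intersection number through the vertical divisor of the canonical rational section of $\met{M} := \LL_0 \otimes (\LL_0')^{\vee}$, which is a model of the trivial bundle $O_X$.

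First I would apply Lemma~\ref{Lemma: Simultaneous Model Lemma}, possibly composed with a normalization, to place $\LL_0, \LL_0', \LL_1, \ldots, \LL_d$ all on a common model $\XX$; the birational transition morphisms preserve classical intersection numbers (by the projection formula, which sends $[\XX'] \mapsto [\XX]$), preserve the formal metrics by Lemma~\ref{Lemma: Pullback of metrics}, and preserve relative nefness of the $\LL_i$. Since $\met{M}$ is isomorphic to $O_X$ on the generic fiber, it carries a canonical rational section $s$ restricting to $1$ on $X$, whose divisor $[\Div(s)] = \sum_v \sum_j \ord_{W_{v,j}}(s)\,[W_{v,j}]$ is supported on only finitely many fibers $\XX_v = \sum_j m(v,j)[W_{v,j}]$; at places $v$ where the divisor does not meet $\XX_v$, the metrics $\metric_{\LL_{0,v}}$ and $\metric_{\LL_{0,v}'}$ coincide, proving the first assertion.

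For the estimate, expanding via $[\Div(s)]$ gives
$$\chern{\met{M}}\chern{\LL_1}\cdots\chern{\LL_d} = \sum_{v,j}\ord_{W_{v,j}}(s)\,\chern{\LL_1|_{W_{v,j}}}\cdots\chern{\LL_d|_{W_{v,j}}}.$$
Locally writing $s = \tau \otimes (\tau')^{-1}$ for compatible rational sections $\tau$ of $\LL_0$ and $\tau'$ of $\LL_0'$, formula~\eqref{Equation: Model Function value versus order function} yields $\ord_{W_{v,j}}(s) = m(v,j)\log\bigl(\|\tau'(\xi_{v,j})\|_{\LL_{0,v}'}/\|\tau(\xi_{v,j})\|_{\LL_{0,v}}\bigr)$, whose absolute value is bounded by $m(v,j)\,\dist_v(\metric_{\LL_{0,v}}, \metric_{\LL_{0,v}'})$. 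Since the relative semipositivity of the $\LL_i$ renders each intersection $\chern{\LL_1|_{W_{v,j}}}\cdots\chern{\LL_d|_{W_{v,j}}}$ nonnegative, the triangle inequality followed by Lemma~\ref{Lemma: Conservation of Number} gives
\begin{align*}
\bigl|\chern{\met{M}}\chern{\LL_1}\cdots\chern{\LL_d}\bigr| & \leq \sum_v \dist_v\bigl(\metric_{\LL_{0,v}}, \metric_{\LL_{0,v}'}\bigr) \sum_j m(v,j)\,\chern{\LL_1|_{W_{v,j}}}\cdots\chern{\LL_d|_{W_{v,j}}} \\
& = \deg_{L_1,\ldots,L_d}(X) \sum_v [k(v):k]\,\dist_v\bigl(\metric_{\LL_{0,v}}, \metric_{\LL_{0,v}'}\bigr),
\end{align*}
as claimed.

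The main obstacle will be the bookkeeping around multiplicities: one must arrange for the factor $m(v,j)$ arising in~\eqref{Equation: Model Function value versus order function} to cancel exactly against the coefficient of $[W_{v,j}]$ in $[\XX_v]$ so that Conservation of Number can be invoked cleanly. Assuring normality of $\XX$ at the generic points of the vertical components, so that the analytic points $\xi_{v,j}$ and equation~\eqref{Equation: Model Function value versus order function} apply verbatim, may require passing to the normalization of $\XX$; since normalization is birational, this neither changes the metrics nor the classical intersection numbers, so the reduction is harmless.
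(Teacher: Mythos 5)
Your proposal is correct and follows essentially the same route as the paper: reduce to a normal model, take the rational section of $\LL_0 \otimes (\LL_0')^{\vee}$ restricting to $1$, convert $\ord_{W_{v,j}}(s)/m(v,j)$ into a metric value at the distinguished point $\xi_{v,j}$ via \eqref{Equation: Model Function value versus order function}, use relative semipositivity to drop absolute values on the fiber intersections, and finish with Lemma~\ref{Lemma: Conservation of Number}. The only cosmetic difference is that the paper does not need the Simultaneous Model Lemma here (all bundles already live on the single $\XX$ given in the hypothesis), and it establishes the "finitely many places" claim by noting $\LL_0 \cong \LL_0'$ over a nonempty open subset of $\BB$; both points are inessential.
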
	

\begin{proof}
	First note that since $\LL_0$ and $\LL_0'$ restrict to the same line bundle on the generic fiber, 
	they must be isomorphic over some nonempty open subset $\UU \subset \BB$. The places 
	corresponding to points of $\BB \setminus \UU$ are finite in number, and these are the only 
	places at which the metrics of $\LL_0$ and $\LL_0'$ can differ.
	
	We may reduce to the case that $\XX$ is normal. Indeed, let $\lift{\varphi}: \lift{\XX} \to \XX$ be the
	normalization morphism. Endow $\lift{\XX}$ with the structure of $\BB$-scheme via composition
	of $\lift{\varphi}$ with the structure morphism for $\XX$. Define $\varphi = \lift{\varphi}_K$. Then
	$\varphi$ and $\lift{\varphi}$ have degree~$1$, so the projection formula for classical intersection
	products shows that intersection numbers in the above inequality are unaffected by pullback to 
	$\lift{\XX}$ and $(\lift{\XX})_K$. To see that the distance between the metrics is unaffected, we 
	note that by Lemma~\ref{Lemma: Pullback of metrics}, $\metric_{\lift{\varphi}^*\LL, v} = 
	\varphi^*\metric_{\LL, v}$ for any line bundle $\LL$ on $\XX$. Since the morphism 
	$\an{\varphi}: \an{\lift{\XX}_{K_v}} \to \an{\XX_{K_v}}$ is surjective, we find
		\[
			\dist_v\left(\varphi^*\metric_{\LL_0, v}, \varphi^*\metric_{\LL_0', v}\right) =
			\dist_v\left(\metric_{\LL_0, v}, \metric_{\LL_0', v}\right).
		\]
	Thus we may replace $\XX$, $X$, and $\LL_i$ by 
	$\lift{\XX}$, $(\lift{\XX})_K$, and $\lift{\varphi}^*\LL_i$, respectively. Note that $\lift{\varphi}^*\LL_i$
	is relatively semipositive by the projection formula.

	Finally, it suffices to prove that if $\LL_0$ is any model of the trivial bundle on $X$, then
		\benn \label{Equation: Sufficient inequality}
			\left|\chern{\LL_0}\cdots \chern{\LL_d}\right|
				\leq \deg_{L_1, \ldots, L_d}(X) \sum_v [k(v):k]
				\left\{\max_{x \in \an{X}_v} \left| -\log \|1(x)\|_{\LL_{0,v}}\right| \right\}.
		\eenn
	Let $s$ be a rational section of $\LL_0$ that restricts to the section $1$ on the generic fiber. 
	Write $[\Div(s)] = \sum_v \sum_j  \ord_{W_{v,j}}(s) W_{v,j}$ and $[\XX_v] = 
	\sum_j m(v,j) W_{v,j}$  as in the remarks preceding this lemma. 
	The function $x \mapsto -\log \|1(x)\|_{\LL_{0,v}}$ must assume its maximum value at one
	of the $\xi_{v,j}$ \cite[2.4.4]{Berkovich_Spectral_Theory_1990}. 
	Using \eqref{Equation: Model Function value versus order function}, we have
		\benn
			\ba
				\left| \chern{\LL_0} \cdots \chern{\LL_d}\right| 
				&\leq \sum_v \sum_j |\ord_{W_{v,j}}(s)| 
					\left| \chern{\LL_1} \cdots \chern{\LL_d} \cdot [W_{v,j}]\right|  \\
				& =  \sum_v \sum_j \left| -\log \|1(\xi_{v,j})\|_{\LL_{0,v}} \right| m(v,j) \ 
					\chern{\LL_1} \cdots \chern{\LL_d} \cdot [W_{v,j}] \\
				&\leq \sum_v \left\{\max_{x \in \an{X}_v} \left| -\log \|1(x)\|_{\LL_{0,v}}\right| \right\}
				 \chern{\LL_1} \cdots \chern{\LL_d} \cdot [\XX_v].
			\ea
		\eenn
	In the second inequality we dropped the absolute values on the intersection with $W_{v,j}$ by 
	using the relative semipositivity of the line bundles $\LL_1, \ldots, \LL_d$. The result now follows
	immediately from Lemma~\ref{Lemma: Conservation of Number}.
\end{proof}


\subsubsection{Existence of the Intersection Pairing}

	Now we define the intersection pairing. We will proceed in several steps. Unless stated otherwise, 
	we let $X$ be a projective variety over $K$, and $\met{L}_0, \ldots, \met{L}_d$ will be adelic
	metrized line bundles on $X$. The idea is to take property (i) of 
	Theorem~\ref{Theorem: Intersection properties} as the definition when the metrics are induced
	by models, and then to use Lemma~\ref{Lemma: Metric bound on intersections} to control the 
	intersection number when passing to limits of model metrics. \\
	
	\noindent \textbf{Step 1} (Metrics induced by relatively semipositive models)
		Suppose $(\XX_i, \LL_i)$ is a $\BB$-model of $(X, L_i^{e_i})$ that induces the given metric
		on $L_i$, and suppose further that each $\LL_i$ is relatively semipositive. 
		Using the Simultaneous Model Lemma (Lemma~\ref{Lemma: Simultaneous Model 
		Lemma}), we obtain a single $\BB$-model $\XX$ and models of the $L_i^{e_i}$ that induce the 
		given metric on $L_i$. We abuse notation and denote these models on $\XX$ by $\LL_i$. 
		Then we define
			\benn
				\achern{\met{L}_0}\cdots \achern{\met{L}_d} = 
				\frac{\chern{\LL_0}\cdots \chern{\LL_d}}{e_0\cdots e_d}.
			\eenn
		To see that this is well-defined, it suffices to take $\XX'$ to be another $\BB$-model of $X$
		and $\LL_i'$ to be models of $L_i^{e_i'}$ that also induce the given metrics. (Note that the
		exponents $e_i'$ need not equal the $e_i$.) In order to prove that this data gives the same 
		intersection number, it is enough to prove that 
			\benn
				\chern{(\LL_0')^{e_0}} \cdots \chern{(\LL_d')^{e_d}} 
				= \chern{\LL_0^{e_0'}} \cdots \chern{\LL_d^{e_d'}}.
			\eenn
		
		By another application of the Simultaneous Model Lemma, we can
		find a single $\BB$-model $\YY$ of $X$, birational morphisms $\pr: \YY \to \XX$  and 
		$\pr': \YY \to \XX'$ that are isomorphisms on generic fibers over $\BB$, and models 
		$\MM_i = \pr^*\LL_i^{e_i'}$ and 
		$\MM_i' = \pr'^*(\LL_i')^{e_i}$ of $L_i^{e_i e_i'}$ that induce the given metrics on $L_i$. 
		By the projection formula, we are reduced to showing
			\be \label{Equation: Telescoping preparation}
				\chern{\MM_0}\cdots \chern{\MM_d} = \chern{\MM_0'}\cdots \chern{\MM_d'}. 
			\ee
		Observe that $\MM_i$ and $\MM_i'$ may be different line bundles on
		$\YY$, but they are models of the same line bundle on $X$ and they induce the same 
		metrics on it.
		
		 Proving \eqref{Equation: Telescoping preparation} uses a telescoping sum argument. 
		 To set it up, note that since the metrics on $L_i$ induced by $\MM_i$ and $\MM_i'$ agree, 
		 Lemma~\ref{Lemma: Metric bound on intersections} shows that
		 	\benn
				\chern{\MM_0} \cdots \chern{\MM_{i-1}} \chern{\MM_i \otimes (\MM_i')^{\vee}}
					\chern{\MM_{i+1}'} \cdots \chern{\MM_d'} = 0.
			\eenn
		Therefore
			\benn
				\ba
				| \chern{\MM_0}&\cdots \chern{\MM_d} - \chern{\MM_0'}\cdots \chern{\MM_d'}| \\
				&= \left|\sum_{i=0}^d \chern{\MM_0} \cdots \chern{\MM_{i-1}} \chern{\MM_i 
					\otimes (\MM_i')^{\vee}} \chern{\MM_{i+1}'} \cdots \chern{\MM_d'} \right| \\
				&\leq \sum_{i=0}^d |\chern{\MM_0} \cdots \chern{\MM_{i-1}} \chern{\MM_i 
					\otimes (\MM_i')^{\vee}} \chern{\MM_{i+1}'} \cdots \chern{\MM_d'}  | = 0.
				\ea
			\eenn
		This completes the first step. \\

		\noindent \textbf{Step 2} (Arbitrary semipositive metrized line bundles)
		Let $\met{L}_0, \ldots, \met{L}_d$ be semipositive metrized line bundles on $X$ with underlying
		algebraic bundles $L_0, \ldots, L_d$, respectively. For each place $v$, denote the metric on 
		$L_{i,v}$ by $\metric_{i,v}$. By definition, there exists a sequence of $\BB$-models 
		$(\XX_{i,m}, \LL_{i,m})$ of the pairs $(X, L_i^{e_{i,m}})$ such that
			\begin{itemize}
				\item $\LL_{i,m}$ is relatively semipositive on $\XX_{i,m}$ for every $i, m$;
				\item There exists an open set $\UU \subset \BB$ such that for place $v \in \UU$, 
					each index $i$ and all $m$, we have $\metric_{{\LL_{i,m}},v}^{1/e_{i,m}} = 
					\metric_{i,v}$. 
				\item For each place $v \not\in \UU$ and each $i$, the sequence of metrics
					$\metric_{{\LL_{i,m}},v}^{1/e_{i,m}}$ converges uniformly to $\metric_{i,v}$.
			\end{itemize}
		Define $\met{L}_{i,m}$ to be the semipositive metrized line bundle having algebraic 
		bundle $L_i$ and the metrics induced by $\LL_{i,m}$. Now we define the arithmetic intersection 
		number to be
			\be \label{Equation: Limit Intersection Definition}
				\achern{\met{L}_0} \cdots \achern{\met{L}_d} = \lim_{m_0, \ldots, m_d \to \infty}
				 \achern{\met{L}_{0,m_0}} \cdots \achern{\met{L}_{d,m_d}}.
			\ee
		Each of the intersection numbers on the right is well-defined by Step~1, so we have to show
		that the limit exists and that it is independent of the sequence of models.
		
		To prove that the limit in \eqref{Equation: Limit Intersection Definition} exists, we take
		$(m_0, \ldots, m_d)$ and $(m_0', \ldots, m_d')$ to be  two $(d+1)$-tuples of positive integers 
		and set $\LL_i = \LL_{i,m_i}^{e_{i,m_i'}}$, $\LL_i' = \LL_{i,m_i'}^{e_{i,m_i}}$, and 
		$e_i = e_{i,m_i}e_{i,m_i'}$. Then on the generic fibers, we have
			\benn
				\LL_i|_X = \LL_{i,m_i}^{e_{i,m_i'}}|_X = L_i^{e_i} \qquad
				\LL_i'|_X = \LL_{i,m_i'}^{e_{i,m_i}}|_X = L_i^{e_i}.
			\eenn
		Further, we may assume that for each $i$, $\LL_i$ and $\LL_i'$ are line bundles on a single 
		$\BB$-model $\XX$ (Simultaneous Model Lemma). Then our definition of the intersection
		pairing in Step~1 gives
			\be \label{Equation: Step 2 rearrangement}
				\ba
					 \achern{\met{L}_{0,m_0}} \cdots &\achern{\met{L}_{d,m_d}} - 
					  	\achern{\met{L}_{0,m_0'}} \cdots \achern{\met{L}_{d,m_d'}}  \\
					  &= \frac{\chern{\LL_{0,m_0}}\cdots \chern{\LL_{d,m_d}}}
					  	{e_{0,m_0}\cdots e_{d,m_d}} - 
						\frac{\chern{\LL_{0,m_0'}}\cdots \chern{\LL_{d,m_d'}}}
					  	{e_{0,m_0'}\cdots e_{d,m_d'}}\\
					  &= \frac{\chern{\LL_0}\cdots \chern{\LL_d} - 
					  	\chern{\LL_0'}\cdots \chern{\LL_d'}}{e_0 \cdots e_d}.
				\ea
			\ee
			
		Now fix $\varepsilon > 0$. For all places $v \in \UU$, we have 
				$\metric_{\LL_i,v}^{1/e_i} =  \metric_{\LL_i',v}^{1/e_i} = \metric_{i,v}$.
		For $v \not\in \UU$ and $m_i, m_i'$ sufficiently large, uniform convergence gives
			\benn
				\dist_v\left(\metric_{\LL_{i,v}}^{1/e_i}, \metric_{\LL'_{i,v}}^{1/e_i}\right) < \varepsilon.
			\eenn
		Just as in Step~1, we use a telescoping argument and apply 
		Lemma~\ref{Lemma: Metric bound on intersections}:
			\be \label{Equation: Step 2 Bound}
				\ba
					| \chern{\LL_0}&\cdots \chern{\LL_d} - \chern{\LL_0'}
						\cdots \chern{\LL_d'}| \\
					&= \left|\sum_{i=0}^d \chern{\LL_0} \cdots \chern{\LL_{i-1}} \chern{\LL_i 
						\otimes (\LL_i')^{\vee}} \chern{\LL_{i+1}'} \cdots \chern{\LL_d'} \right| \\
					&\leq \sum_{i=0}^d \left|\chern{\LL_0} \cdots \chern{\LL_{i-1}} \chern{\LL_i 
						\otimes (\LL_i')^{\vee}} \chern{\LL_{i+1}'} \cdots \chern{\LL_d'} \right| \\
					&\leq \sum_{i=0}^d \deg_{L_0^{e_0}, \ldots, \widehat{L_i^{e_i}}, 
						\ldots, L_d^{e_d}}(X) \sum_v [k(v):k] \dist_v\left(\metric_{\LL_i,v}, 
						\metric_{\LL'_i,v}\right) \\
					&< \varepsilon \prod_{i=0}^d e_i 
						\left(\sum_i \deg_{L_0, \ldots, \widehat{L_i}, 
						\ldots, L_d}(X) \right) \left(\sum_{v \not\in \UU} 
						[k(v):k] \right).
				\ea
			\ee
		Combining \eqref{Equation: Step 2 rearrangement} and \eqref{Equation: Step 2 Bound} shows
		that the sequence $\{\achern{\met{L}_{0,m_0}} \cdots \achern{\met{L}_{d,m_d}}\}$ is 
		Cauchy, which is tantamount to showing the the limit in 
		\eqref{Equation: Limit Intersection Definition} exists.
	
		To see that the limit in \eqref{Equation: Limit Intersection Definition} is independent of the
		sequence of models chosen, for each $i$ we let $\{(\XX_{i,m}', \LL_{i,m}')\}$ be another 
		sequence of models with associated metrics converging to the given ones on $L_i$.  Then we 
		obtain a third sequence 
			\benn
				\left\{(\XX_{i,m}'', \LL_{i,m}'')\right\} = \left\{(\XX_{i,1}, \LL_{i,1}), (\XX_{i,1}', 
				\LL_{i,1}'), (\XX_{i,2}, \LL_{i,2}), (\XX_{i,2}', \LL_{i,2}'), \ldots \right\}.
			\eenn
		This sequence also induces metrics converging uniformly to $\met{L}_i$, so by our above
		work we know that the limit in \eqref{Equation: Limit Intersection Definition} exists for this
		sequence. Therefore the limits over odd and even terms must agree, which is precisely what
		we wanted to prove.	
		
		Note that the symmetry and multilinearity of the pairing are guaranteed immediately by virtue
		of the same properties for the classical intersection pairing.\\

		\noindent \textbf{Step 3} (Integrable metrized line bundles)
		Now we extend the pairing by linearity since any integrable metrized line bundle 
		$\met{L}$ can be written $\met{L} = \met{L}' \otimes(\met{L}'')^{\vee}$ with $\met{L}'$ and
		$\met{L}''$ semipositive. As there may be multiple ways of decomposing 
		$\met{L}$ as a difference of semipositive metrized line bundles, a question of uniqueness
		arises. This is easily settled however using Lemma~\ref{Lemma: Metric bound on intersections}
		and we illustrate it only in the simplest case to avoid unnecessary notation.
		
		Let $\met{L}_1, \ldots, \met{L}_d$ be semipositive metrized line bundles and let $\met{L}$
		be an integrable metrized line bundle with a decomposition 
		$\met{L}=\met{L}' \otimes(\met{L}'')^{\vee}$ as above. Then
			\be \label{Eq: Integrable}
				\achern{\met{L}} \achern{\met{L}_1} \cdots \achern{\met{L}_d} = 
					\achern{\met{L}'} \achern{\met{L}_1} \cdots \achern{\met{L}_d} -
					\achern{\met{L}''} \achern{\met{L}_1} \cdots \achern{\met{L}_d}.
			\ee
		If $\met{L}$ has a second decomposition $\met{M}' \otimes(\met{M}'')^{\vee}$ as a difference of 
		semipositive metrized line bundles, it follows that $\met{L}' \otimes \met{M}'' = 
		\met{L}'' \otimes \met{M}'$. Each side of this last equality is semipositive with the same 
		underlying algebraic bundle and the same metrics, and so it follows from 
		Lemma~\ref{Lemma: Metric bound on intersections} that
			\benn
				\achern{\met{L}' \otimes \met{M}''} \achern{\met{L}_1} \cdots \achern{\met{L}_d} =
				\achern{\met{L}'' \otimes \met{M}'} \achern{\met{L}_1} \cdots \achern{\met{L}_d}.
			\eenn
		Splitting each side into two terms using linearity and rearranging shows that the expression
		in~\eqref{Eq: Integrable} is indeed well-defined.

	
\subsubsection{Proof of Theorem~\ref{Theorem: Intersection properties}}

	In the previous section we constructed an intersection pairing on $\overline{\Pic}(X)^{d+1}$, 
	and it is straight forward to check that the construction gives properties (i)-(iii) by using the 
	symmetry and multilinearity of the classical intersection product, 
	Lemma~\ref{Lemma: Metric bound on intersections}, and a limiting argument. Conversely, since we
	used (i) as our definition, and since the behavior under limits is 
	governed by (ii), our construction gives the only intersection product satisfying these two properties. 
	
	Therefore, it only remains to prove property (iv). Intuitively, the idea is that if $\YY$ and 
	$\XX$ are $\BB$-models of $Y$ and $X$, respectively, then the morphism
	$\varphi: Y \to X$ extends to a rational map $\widetilde{\varphi}: \YY \dashrightarrow  \XX$
	that is generically finite and has degree equal to the degree of $\varphi$.
	
	By linearity, it suffices to prove (iv) when $\met{L}_0, \ldots, \met{L}_d$ are semipositive metrized
	line bundles. Let us also assume for the moment that the metrics are induced by relatively 
	semipositive models $\LL_i$ of $L_i^{e_i}$. As per usual, we may
	use the Simultaneous Model Lemma to suppose all of these model line bundles live on a single 
	$\BB$-model $\XX$ of $X$. By Lemma~\ref{Lemma: Pullback of metrics} there exists a 
	$\BB$-model $\YY$ of $Y$ and a morphism $\lift{\varphi}: \YY \to \XX$ such that 
	$\lift{\varphi}_K = \varphi$. In this case, we find that $(\YY, \lift{\varphi}^*\LL_i)$ is a $\BB$-model
	of the pair $(Y, \varphi^*L_i^{e_i})$. By the projection formula for classical intersection products, 
	we get
		\benn	
			\ba
				\achern{\varphi^*\met{L}_0} \cdots \achern{\varphi^*\met{L}_d} &= 
					\frac{\chern{\lift{\varphi}^*\LL_0} \cdots \chern{\lift{\varphi}^*\LL_d}}
					{e_0 \cdots e_d} \\
				&= \deg(\lift{\varphi}) \ \frac{\chern{\LL_0} \cdots 
					\chern{\LL_d}}{e_0 \cdots e_d} \\
				&= \deg(\lift{\varphi}) \ \achern{\met{L}_0} \cdots \achern{\met{L}_d}.
			\ea		
		\eenn
	Passing to the generic fibers of $\YY$ and $\XX$ over $\BB$ does not change their function fields;
	hence, $\deg(\lift{\varphi}) = \deg(\varphi)$. This completes the proof when the metrics
	are all induced by models. 
	
	Since $\varphi: Y \to X$ is surjective, we know the induced morphism $\an{\varphi}: \an{Y}_v \to \an{X}_v$ 
	of analytic spaces is too \cite[Prop.~3.4.6]{Berkovich_Spectral_Theory_1990}.
	So if $\met{L}_i$ is an arbitrary semipositive metrized line bundle and $(\XX_i, \LL_i)$ is a model 
	of $(X, L_i^{e_i})$, we have
		\benn
			\dist_v\left(\varphi^*\metric_{\LL_i, v}^{1/e_i}, \varphi^*\metric_{\met{L}_i, v}\right)= 
			\dist_v\left(\metric_{\LL_i, v}^{1/e_i}, \metric_{\met{L}_i, v}\right),
		\eenn 
	where $\metric_{\met{L}_i, v}$ denotes the given metric on $L_i$ at the place $v$. This shows
	that if $\{\met{L}_{i,m}\}_m$ is a sequence of semipositive metrized line bundles induced from 
	models for which the metrics converge uniformly to those of $\met{L}_i$, then 
	$\{\varphi^*\met{L}_{i,m}\}_m$ is a sequence of semipositive metrized line bundles for which the
	metrics converge uniformly to those of $\varphi^*\met{L}_i$.
	This observation coupled with the above work
	in the model case gives the desired conclusion.


\subsection{Model Functions} 
\label{Section: Model Functions}

	In this section we fix a projective variety $X$ over $K$ and a place $v$ of $K$. As always, we 
	will identify $v$ with a closed point of the curve $\BB$. 

	A function $f: \an{X}_v \to \RR$ will be called a \textit{model function} if it is 
	of the form 
		\[
			f(x) = - \log \|1(x)\|^{1/e}
		\]
	for some formal metric $\|\cdot \|$ on $O_{\an{X}_v}$ and some positive integer $e$.
	Every such function is continuous on $\an{X}_v$. The set of model functions forms
	a $\QQ$-vector space. (The tensor product and inverse of formal metrics is again a formal metric.)
	The importance of model functions stems from the following result:
	
\begin{lem}[Gubler] \label{Lemma: Density of Model Functions}
	The space of model functions is uniformly dense in the space of real-valued 
	continuous functions on $\an{X}_v$.
\end{lem}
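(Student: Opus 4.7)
The plan is to apply the Stone--Weierstrass theorem for vector lattices (Kakutani--Stone): a $\QQ$-vector subspace of $C(\an{X}_v,\RR)$ that is closed under the pointwise operations $\max$ and $\min$, separates points, and whose uniform closure contains the real constants must be uniformly dense. Since the compactness of $\an{X}_v$ is already known and the $\QQ$-vector space structure of the set of model functions is noted just before the statement, three things remain: the lattice property, point separation, and constants.

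The lattice property is the heart of the matter and is where formal geometry must be invoked. Given two model functions $f_i = -\log\|1\|_i^{1/e_i}$, after raising to a common power $e = \mathrm{lcm}(e_1,e_2)$ and using the Simultaneous Model Lemma (Lemma~\ref{Lemma: Simultaneous Model Lemma}) in its formal analogue, we may assume both formal metrics live on a single admissible formal $\Kvcirc$-model $\fX$ with respect to formal line bundles $\fL_1,\fL_2$ that are models of $O_X^{\otimes e}$. Locally on a formal trivializing open $\Spf \mathcal{A}$ the formal metrics take the shape $\|1(x)\| = |g_i(x)|$ for $g_i \in \mathcal{A}$, and $\max(f_1,f_2) = -\frac{1}{e}\log\min(|g_1|,|g_2|)$. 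I realize the right-hand side as a formal metric by performing the admissible blow-up of the ideal $(g_1,g_2)\subset \mathcal{O}_{\fX}$; on the blow-up $\fX'\to \fX$ the pulled-back ideal is invertible and determines a formal line bundle $\fL'$ with $\|1\|_{\fL'} = \min(|g_1|,|g_2|)$ locally, so $\max(f_1,f_2)$ is the model function of $\fL'$. The same device with the ideal $(g_1)\cap(g_2)$ (or by replacing $(f_1,f_2)$ with $(-f_1,-f_2)$ and negating) gives $\min(f_1,f_2)$. The main obstacle here is verifying that this construction globalizes coherently and is admissible in the sense of Bosch--L\"utkebohmert; this requires appealing to the theory of admissible formal blow-ups as in \cite{Bosch-Lutkebohmert_Formal_Rigid_1}.

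For point separation, two distinct points $x_1\neq x_2$ of $\an{X}_v$ are distinct multiplicative seminorms. Choose a strict $K_v$-affinoid subdomain $\MM(\mathcal{A}) \subset \an{X}_v$ containing both, and an element $g \in \mathcal{A}$ with $|g(x_1)|\neq |g(x_2)|$; by continuity of the seminorms and density of $\mathcal{A}^\circ \cdot K_v$ in $\mathcal{A}$, we may take $g$ of the form $g = a \cdot h$ with $h \in \mathcal{A}^\circ$ and $a \in K_v^{\times}$, so that $-\log|g|$ arises as a model function on a suitable formal affine model (by taking the trivial bundle trivialized with generator $g$). Finally, the constant function $-\log|a|_v$ is a model function for every $a \in K^\times$ (take $\fL = O_\fX$ with trivializing section $a$), so the $\QQ$-span of model functions contains every rational multiple of $\log|\pi|_v$ for a uniformizer $\pi$; since $\log|\pi|_v < 0$, this rational span is dense in $\RR$, and real constants lie in the uniform closure.

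Putting these three ingredients together, the uniform closure of the $\QQ$-vector space of model functions is an $\RR$-vector sublattice of $C(\an{X}_v,\RR)$ that contains all constants and separates points, hence equals $C(\an{X}_v,\RR)$ by Kakutani--Stone. The only non-routine step is the construction of the lattice operations via admissible blow-ups, and I expect this to be the main technical obstacle.
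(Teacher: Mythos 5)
The paper's own proof of this lemma is a one-line citation of Gubler's Theorem~7.12, and your outline is essentially a reconstruction of Gubler's argument: the lattice form of Stone--Weierstrass, with the lattice property of model functions supplied by admissible formal blow-ups. The lattice step is the right idea and is essentially correct (modulo the bookkeeping of whether the blow-up of $(g_1,g_2)$ produces $\max$ or $\min$, which your ``dualize and negate'' remark covers; I would drop the $(g_1)\cap(g_2)$ variant, whose admissibility is unclear). The constants step is also fine.

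The separation-of-points step is where the genuine gap lies. A minor issue first: the existence of a common strictly affinoid neighborhood of $x_1$ and $x_2$ is not automatic (a union of affinoids need not be affinoid); for projective $X$ it can be arranged by choosing a formal affine chart of a projective formal model whose special fiber contains both reduction points, but this requires an argument. More seriously, even with $g\in\mathcal{A}$ satisfying $|g(x_1)|\neq|g(x_2)|$ in hand, the function $-\log|g|$ is \emph{not} a model function in the sense of the paper: ``the trivial bundle trivialized with generator $g$'' is a formal trivialization only if $g\in(\mathcal{A}^{\circ})^{\times}$, in which case $|g|\equiv 1$ on $\MM(\mathcal{A})$ and nothing is separated; and a model function is by definition given by a formal metric on a \emph{global} formal model of $X$, so any local datum must still be extended across the remaining charts by transition functions of supremum norm~$1$. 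The repair is another instance of your blow-up device: one realizes a truncation such as $\min(-\log|g|,-\log|c|)$, for $c\in K_v^{\times}$ with $|c|<\min(|g(x_1)|,|g(x_2)|)$, via an admissible blow-up of an ideal of the form $(g,c)$, and one must then check that this function glues with the constant $-\log|c|$ over the remaining charts of a global formal model while still separating $x_1$ from $x_2$. That truncate-and-glue argument is precisely the technical content of Gubler's proof that your sketch elides; as written, the separation step does not produce model functions.
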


\begin{proof}
	This is the content of Theorem~7.12 of \cite{Gubler_Local_Heights_of_Subvarieties_1998}.
	One should note that, while the author assumes at the outset of section~7
	that the field $K$ is algebraically closed, he makes no use of it
	(nor is it needed for his references to the papers of Bosch and Lutkeb\"ohmert). 
\end{proof}

	As our intersection theory for adelic metrized line bundles is defined via intersection theory on
	$\BB$-models, we need to be able to relate model functions to global models in order to 
	perform computations with them. A result of Yuan provides this relation.

\begin{lem}[Yuan] \label{Lemma: Lifting Model Functions} 
	Suppose $\|\cdot\|_v$ is a formal metric on the trivial bundle over $\an{X}_v$, $e \geq 1$ is an integer, and
	$f(x) = -\log \|1(x)\|_v^{1/e}$ is a model function. Then there exists a $\BB$-model $(\XX, \OO(f))$ 
	of $(X,O_X^e)$ such that the metrics on $O_X$ determined by $\OO(f)$ are trivial at all places 
	$w \not= v$, and at the place $v$ the metric is $\|\cdot\|_v^{1/e}$. The line bundle $\OO(f)$ 
	admits a rational section $s$ such that $s|_X = 1$ and such that the support of the divisor of $s$ 
	lies entirely in the fiber over the point $v \in \BB$.
\end{lem}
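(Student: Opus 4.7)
The plan is to realize the local formal data defining $\|\cdot\|_v$ as the completion at $v$ of an algebraic line bundle on a global $\BB$-model, from which a vertical Cartier divisor supported entirely over $v$ can be extracted.

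First, I would begin with any $\BB$-model $\YY$ of $X$; its formal completion along the fiber over $v$, denoted $\widehat{\YY}_v$, is an admissible formal $\Kvcirc$-model of $X_{K_v}$. By Raynaud's theorem on admissible formal blowups, there is a common admissible formal blowup $\fX'_v$ of $\widehat{\YY}_v$ and of the formal $\Kvcirc$-model $\fX_v$ carrying $\fL_v$. The blowup $\fX'_v \to \widehat{\YY}_v$ is along an admissible ideal (containing a power of a uniformizer), and such ideals are algebraizable: they come from ideal sheaves on $\YY \times_{\BB} \Spec \OO_{\BB,v}$ whose support lies in the closed fiber. Such an ideal extends by the structure sheaf away from $v$ to a coherent ideal on $\YY$, and blowing up produces a global $\BB$-model $\YY'$ of $X$ whose formal completion at $v$ is $\fX'_v$.

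Next, I would pull $\fL_v$ back along the morphism $\fX'_v \to \fX_v$ to obtain a formal line bundle $\fL'_v$ on $\widehat{\YY'}_v$ whose induced formal metric on $O_{\an{X}_v}$ equals $\|\cdot\|_v$, since pullback through an isomorphism of analytic generic fibers preserves the metric. Applying formal GAGA (Grothendieck's existence theorem) to the proper flat $\OO_{\BB,v}$-scheme $\YY' \times_\BB \Spec \OO_{\BB,v}$, $\fL'_v$ is the completion of a unique algebraic line bundle $\LL$ on this localized scheme, and I may choose the algebraic trivialization on the generic fiber so that $\LL \otimes_{\OO_{\BB,v}} K_v \cong O_{X_{K_v}}^e$ in a way compatible with the formal trivialization of $\fL_v$. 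Under this identification, the rational section corresponding to $1 \in O_{X_{K_v}}^e$ has divisor $D$ supported on the special fiber, whence $\LL \cong \OO(D)$. Since $\supp(D)$ is a closed subscheme of the fiber of $\YY'$ over $v$, the divisor $D$ extends uniquely to a Cartier divisor $D'$ on $\YY'$ supported entirely over $v$. Declare $\XX := \YY'$ and $\OO(f) := \OO_\XX(D')$.

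Finally, the rational section $s = 1$ of $\OO(f)$ has divisor $D'$ (supported over $v$) and restricts to $1 \in O_X^e$ on the generic fiber, so $(\XX, \OO(f))$ is a $\BB$-model of $(X, O_X^e)$. For any place $w \neq v$, the fiber $\XX_w$ is disjoint from $\supp(D')$, so $s = 1$ is a generating section of $\OO(f)$ in a formal neighborhood of $\XX_w$, which gives the trivial formal metric at $w$. At $v$, the formal completion of $\OO_\XX(D')$ along the fiber over $v$ coincides with $\fL'_v$ by construction, so the induced formal metric on $O^e$ at $v$ equals $\|\cdot\|_v$, and hence the metric on $O$ is $\|\cdot\|_v^{1/e}$. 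The main obstacle is the interplay between local formal and global algebraic geometries — specifically, combining Raynaud's theorem on admissible formal blowups with formal GAGA in order to transfer the purely local formal data into a globally defined $\BB$-model and line bundle.
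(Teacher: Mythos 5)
Your proposal is correct, and it essentially reconstructs the content of Yuan's Lemma~3.5, which the paper does not reprove but simply cites (the paper's own proof consists of the citation for $e=1$ plus the observation that the general case follows by applying the $e=1$ case to $g = ef$ and then viewing the resulting $\OO(g)$ as a model of $O_X^e$ via $1 \mapsto 1^{\otimes e}$). Your route — Raynaud's theorem to dominate the given formal model and the completion $\widehat{\YY}_v$ of a global model by a common admissible formal blowup, algebraization of the (open, hence $\pi^N$-containing) blowup ideal to get a global $\BB$-model $\YY'$ modified only over $v$, and then descent of the resulting vertical Cartier divisor — is exactly the mechanism underlying the cited lemma, so in substance you and the paper agree; you simply do directly for general $e$ what the paper does by the trivial reduction to $e=1$. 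Two small points of care: Grothendieck's existence theorem should be invoked for the proper scheme $\YY'_{\Kvcirc}$ over the \emph{complete} ring $\Kvcirc$, not over $\OO_{\BB,v}$ (which is not $\mm_v$-adically complete); the descent from $\YY'_{\Kvcirc}$ back to $\YY'$ then works because a vertical Cartier divisor pinched between $\pm N\,\Div(\pi)$ is determined by data on the $N$-th infinitesimal neighborhood of the fiber, which $\YY'$ and $\YY'_{\Kvcirc}$ share. Indeed, one can bypass formal GAGA for the line bundle entirely and algebraize only the vertical divisor of the section $1$ by this same open-ideal argument, which is slightly more economical. Neither point is a gap in the argument.
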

	
\begin{proof}
	When $e=1$, this is precisely the content of the statement and proof
	of \cite[Lemma~3.5]{Xinyi_Arithmetic_Bigness_2006}. 
	The proof executed there in the case where
	$K$ is a number field applies equally well to the present situation.
	To extend to the case $e>1$, we start with a model function $f(x) = -\log \|1(x)\|_v^{1/e}$. Let
	$g = ef$. We may apply the case $e=1$ to the model function $g$ to get a 
	$\BB$-model $(\XX, \OO(g))$ of $(X, O_X)$ with trivial metrics at all places $w \not= v$ and the 
	metric $\|\cdot\|_v$ at $v$. Now view $\OO(g)$ as a model of the bundle $O_X^{e}$ via the 
	canonical isomorphism $O_X \simarrow O_X^e$ sending $1$ to $1^{\otimes e}$. This is 
	precisely the $\BB$-model we seek, as the metric at $v$ is now $\|\cdot\|_v^{1/e}$.
\end{proof}

	Momentarily we will prove a fundamental formula for intersecting the line bundle $\OO(f)$
	with the Zariski closure of a point of $x$ in some model $\XX$. First we need some notation.
	If $x$ is any closed point of $X$, note that $x$ breaks up into finitely many closed points over 
	$K_v$ --- one for each extension of the valuation $v$ to the residue field $K(x)$ at $x$ 
	(cf. Proposition~\ref{Proposition: fiber ring calculation}). 
	Let $\orb(x)$ be the image of this set of points under the canonical inclusion of 
	$|X_{K_v}| \hookrightarrow \an{X}_v$, where $|X_{K_v}|$ is the set of closed points of
	$X_{K_v}$. Another way to view $\orb(x)$ is as the image of $\an{\{x\}}_v$ in $\an{X}_v$. 
	Let $\deg_v(y) = [K_v(y):K_v]$ be the degree of the residue field of the closed point 
	$y \in X_{K_v}$ as an extension over $K_v$. These degrees satisfy the relation
	$\deg(x) = \sum_{y \in \orb(x)} \deg_v(y)$. (This is a classical fact proved in the appendix.  
	It can also be deduced from the discussion at the end of section~\ref{Section: Associated measure} by integrating 
	a nonzero constant function.)

\begin{lem} \label{Lemma: Intersection vs evaluation of f}
	Suppose $f$ is a model function on $\an{X}_v$ induced by
	a formal metric on $O_{\an{X}_v}$. Let $(\XX,\OO(f))$ be a $\BB$-model of $(X,O_X)$
	as in Yuan's lemma. If $x$ is a closed point of $X$, then
		\be \label{Equation: f against the Galois measure}
			\chern{\OO(f)}\cdot [\overline{x}] = [k(v):k]\sum_{y \in O_v(x)}\deg_v(y)f(y) ,
		\ee
	where $\overline{x}$ is the Zariski closure of $x$ in $\XX$ and $k(v)$ is the residue field
	of the point $v \in \BB $.
\end{lem}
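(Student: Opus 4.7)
The plan is to use the rational section $s$ provided by Yuan's lemma as an algebraic proxy for the model function $f$, and then evaluate the resulting proper intersection after base-changing to the normalization of the horizontal curve $\overline{x}$.

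First, since $s|_X = 1$ is nonzero on the generic fiber of the horizontal $1$-dimensional subscheme $\overline{x} \subset \XX$ and $\supp \Div(s) \subset \XX_v$, the intersection is proper and $\chern{\OO(f)} \cdot [\overline{x}] = \deg(\Div(s) \cdot [\overline{x}])$, a $0$-cycle supported on $\overline{x} \cap \XX_v$. Passing to the normalization $\nu : \widetilde{C} \to \overline{x}$, which is a smooth proper $k$-curve finite over $\BB$ with generic fiber $\Spec K(x) \to \Spec K$, the degree is unchanged and
$$\chern{\OO(f)} \cdot [\overline{x}] \;=\; \deg_{\widetilde{C}}\bigl(\nu^*(\OO(f)|_{\overline{x}})\bigr) \;=\; \sum_{y_* \in \widetilde{C}_v} \ord_{y_*}(\nu^* s) \cdot [k(y_*):k].$$
The closed points of $\widetilde{C}$ over $v$ are in bijection with the places $w$ of $K(x)$ extending $v$, and these in turn correspond to the points $y \in \orb(x) \subset \an{X}_v$; denoting by $e_{w|v}$ and $f_{w|v}$ the ramification index and residue degree, one has $[k(y_*):k] = f_{w|v}[k(v):k]$ and $\deg_v(y) = e_{w|v} f_{w|v}$.

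The heart of the argument is the local computation relating $\ord_{y_*}(\nu^* s)$ to $f(y)$. Pick a formal trivialization $\tilde{t}$ of $\widehat{\OO(f)}_v$ near $r(y) \in \XX_v$ and write $s = \sigma \tilde{t}$ with $\sigma$ a rational function on $\XX$, so that $\|1(y)\|_{\OO(f),v} = |\sigma(y)|$. By properness, $y$ lifts to $\tilde{y} : \Spec \mathcal{O}_{K_v(y)} \to \XX$, and the factorization through $\widetilde{C}$ (valid because $\mathcal{O}_{K_v(y)}$ is integrally closed) together with the fact that $\mathcal{O}_{K_v(y)}$ is the completion of $\mathcal{O}_{\widetilde{C}, y_*}$ give $\ord_{y_*}(\nu^* s) = \ord_{\pi_{K_v(y)}}(\tilde{y}^* \sigma)$ for a uniformizer $\pi_{K_v(y)}$. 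Combining this with the relation $|\pi_{K_v(y)}|_v^{e_{w|v}} = |\pi_v|_v$ and the definition $f(y) = -\log\|1(y)\|_{\OO(f),v}^{1/e}$ expresses $\ord_{y_*}(\nu^* s)$ explicitly in terms of $e_{w|v}$ and $f(y)$; substituting into the displayed sum and applying the two identities $[k(y_*):k] = f_{w|v}[k(v):k]$ and $\deg_v(y) = e_{w|v} f_{w|v}$ collapses the expression to $[k(v):k] \sum_y \deg_v(y) f(y)$.

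The main obstacle is the bookkeeping in this local step: one must consistently track how the valuation on $K(x)$ at $y_*$ extends to $K(x)_w = K_v(y)$, how its normalization compares to the original on $K$ through the ramification index $e_{w|v}$, and how the exponent $1/e$ built into the model function $f$ enters via the derived metric $\|\cdot\|_{\OO(f),v}^{1/e}$. Once these normalizations are handled coherently, the terms align to give the stated identity.
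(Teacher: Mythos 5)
Your proof is correct, and it isolates the same local computation as the paper's, but it unpacks the classical intersection number $\chern{\OO(f)}\cdot[\overline{x}]$ by a different route. The paper stays on the possibly singular horizontal curve $\overline{x}$, writes the intersection number near $v$ as a sum of \emph{lengths} $\ord_t(s)$ over the local rings of $\overline{x} = \Spec T$, base changes to $\Kvcirc$ so that $\hat T = \prod_i \hat T_i$ with factors indexed by $\orb(x)$, and only at the very end converts lengths over the one-dimensional local rings $\hat T_i$ into valuations on the dominating valuation rings $R$ of $K_v(y)$ via the formula $l_S(S/aS)=\sum_R l_R(R/aR)[R/\mm_R:S/\mm_S]$ quoted from Fulton. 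You instead normalize first: passing to $\nu:\widetilde C\to\overline{x}$ and using that degree is preserved under the degree-one finite morphism $\nu$ turns every local multiplicity into an honest valuation $\ord_{y_*}$ on the smooth curve $\widetilde C$, whose fiber over $v$ is identified with the places $w\mid v$ of $K(x)$ and hence with $\orb(x)$ by the proposition in the appendix. The content of Fulton's length formula is thereby absorbed into the standard birational invariance of the degree of a line bundle on a proper curve, and the remaining bookkeeping --- $f(y)=\ord_R(\tilde y^*\sigma)/e_{w|v}$ from the lift $\tilde y:\Spec R\to\XX_{\Kvcirc}$ supplied by properness, together with $\deg_v(y)=e_{w|v}f_{w|v}$ and $[k(y_*):k]=f_{w|v}[k(v):k]$ --- is identical in substance to the paper's. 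Your packaging is arguably cleaner to read (valuations throughout, no lengths over non-normal rings); the paper's avoids introducing the normalization as a global object. One cosmetic point: the lemma takes $(\XX,\OO(f))$ to be a model of $(X,O_X)$, so the exponent $1/e$ you carry in the formula for $f$ is vacuous here.
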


\begin{proof}

	We begin the proof by interpreting the contribution of a point $y \in \orb(x)$ to the
	sum in~\eqref{Equation: f against the Galois measure} in
	terms of lengths of modules over a neighborhood in the formal completion of $\XX$ along the 
	closed fiber over the point $v$. Then we interpret the intersection number 
	$\chern{\OO(f)}\cdot \overline{x}$ in terms of the same quantities by working on the 
	formal completion of $\overline{x}$ along its closed fiber over $v$. 
	
	A point $y \in \orb(x)$ corresponds to a finite extension of fields $K_v(y) / K_v$ and a 
	$K_v$-morphism $\Spec K_v(y) \to X_{K_v}$. Let $R$ be the valuation ring of $K_v(y)$; 
	i.e., the integral closure of $K_v^{\circ}$ in $K_v(y)$. By properness, we obtain a lift to a 
	$K_v^{\circ}$-morphism $\tilde{y}: \Spec R \to \XX_{K_v^{\circ}}$. Now take an open affine 
	$W =\Spec A$ around the image of the closed point of $\Spec R$ via $\tilde{y}$ over which 
	$\OO(f) \otimes K_v^{\circ}$ is trivial. For topological reasons, $\tilde{y}$ factors through 
	$W$. Set $M =\left(\OO(f) \otimes K_v^{\circ}\right)|_W$; it is a free
	$A$-module of rank $1$. Then there exists a section $u \in M$ that generates 
	$M$ as an $A$-module and, by Yuan's lemma, a rational section $s$ such that 
	$s|_{W_{K_v}} = 1$. Write $s = (\gamma_1 / \gamma_2)u$ for some $\gamma_1, \gamma_2 \in A$. 
			
	Let $\mm_v$ and $\mm_R$ be the maximal ideals of $\Kvcirc$ and $R$, respectively. 
	Both of the latter are discrete valuation rings; we denote the corresponding normalized valuations by	
	$\ord_v$ and $\ord_R$, respectively. (Recall that this means a uniformizer has valuation
	$1$.) Extend these valuations to the fraction fields $K_v$ and $K_v(y)$ in the usual way.
	The normalized absolute value on $K_v$ is given by $|c|=\exp(- \ord_v(c))$ for any
	$c \in K_v$. If $e_y$ is the ramification index of $K_v(y)$ over $K_v$, then the absolute 
	value extends uniquely to $K_v(y)$, and is given by the formula $|c|=\exp(- \ord_R(c)/e_y)$.
	
	 Passing to the $\mm_v$-adic completion of everything in sight gives a morphism 
	 $\hat{y}: \Spf(R) \to \Spf(\hat{A})$, and we denote by 
	 $\hat{\gamma}_i$ the image of $\gamma_i$ in $\hat{A}$. 
	 Let $\alpha_{\hat{y}}: \hat{A} \to R$ be the induced morphism of complete $K_v^{\circ}$-algebras. 
	 By definition, we now have
	 	\benn
			\ba
				f(y) &= - \log \|1(y)\| = -\log\left| \frac{\hat{\gamma}_1}
				{\hat{\gamma}_2} (y) \right| =
				-\log\left|\frac{\alpha_{\hat{y}}(\hat{\gamma}_1)}
				{\alpha_{\hat{y}}(\hat{\gamma}_2)}\right| \\
				&= e_y^{-1} \ord_R\left( \alpha_{\hat{y}}(\hat{\gamma}_1) / 
				\alpha_{\hat{y}}(\hat{\gamma}_2) \right).
			\ea
		\eenn
	Letting $k_R = R / \mm_R$ and $k_v = \Kvcirc / \mm_v$ be the relevant residue fields, we have
	$[K_v(y):K_v] = e_y[k_R:k_v]$. This follows, for example, from the degree formula for 
	extensions of Dedekind rings. Therefore 
		\be \label{Equation: f contribution versus lengths}
			\deg_v(y)f(y) = [k_R:k_v]\ord_R\left( \alpha_{\hat{y}}(\hat{\gamma}_1) / 
				\alpha_{\hat{y}}(\hat{\gamma}_2) \right).
		\ee
	Note that $k_v$ is canonically isomorphic to the residue field of $\OO_{\BB,v}$ because the 
	completion of this local ring is precisely $\Kvcirc$; i.e., $k_v \cong k(v)$. 
			
	Now we turn to the intersection number $\chern{\OO(f)}\cdot [\overline{x}]$. 
	Let $\pi: \XX \to \BB$ be the structure morphism, and 
	let $j:\overline{x} \into \XX$ be the closed immersion of $\overline{x}$ with its reduced
	subscheme structure. There exists a rational section $s$ such that $\supp [\Div(s)]$ is 
	contained entirely in $\pi^{-1}(v)$, and $s|_X = 1$. This is the same section $s$ that was used
	above (prior to restriction and base change). As $\overline{x}$ is proper and quasi-finite
	over $\BB$, it is finite over $\BB$. To compute the intersection number 
	$\chern{\OO(f)}\cdot [\overline{x}]$ we may restrict to an affine neighborhood $U = \Spec C$ 
	of $v$. Let $\overline{x}_U = \Spec T$, where $T$ is a finite domain over $C$.  We also call $j$
	the morphism	$\Spec T \hookrightarrow \XX_U$. Let $N = j^*\left(\OO(f)|_{\XX_U}\right)$ 
	be the corresponding $T$-module, and we will also write $s$ for the image of our rational 
	section in $N$. By definition, we have
		\be \label{Equation: Affine intersection number}
			\chern{\OO(f)}\cdot [\overline{x}] 
			=  \sum_{\substack{t \in \Spec T \\ \text{closed points}}} [k(t):k] \ord_t(s).
		\ee
	Here $\ord_t(s) =
	l_{T_{\mm_t}}\left(T_{\mm_t}/ \sigma_1T_{\mm_t}\right) - l_{T_{\mm_t}}\left(T_{\mm_t}
	 / \sigma_2T_{\mm_t}\right)$, where $\mm_t$ is the maximal ideal of $T$ corresponding to
	 the point $t$, and $s$ corresponds to  $\sigma_1 / \sigma_2$ for 
	 some $\sigma_1, \sigma_2 \in T_{\mm_t}$
	 under an isomorphism $N  \otimes T_{\mm_t} \simarrow T_{\mm_t}$. It is
	 independent of the choice of isomorphism and of the choice of $\sigma_i$ (cf. 
	 \cite[Appendix A.3]{Fulton_Intersection_Theory_1998}).
	  
	 To say that $\supp [\Div(s)]$ lies in the fiber over $v$ means that $\ord_t(s) = 0$ whenever the closed point
	 $t$ does not lie over $v$. Thus localizing on the base $U$ in~\eqref{Equation: Affine intersection number} 
	 preserves all of the quantities in the sum, and so we may replace $T$ by $T \otimes_C \OO_{\BB,v}$. 
	 We continue to call this semi-local 
	 ring $T$. But length is preserved by flat residually trivial base extension, so we may even pass
	 to the $\mm_v$-adic completion without affecting the quantities 
	 in~\eqref{Equation: Affine intersection number}. Now $\hat{\OO}_{\BB,v} = K_v^{\circ}$, and 
	 $\hat{T} = T \otimes_{\OO_{\BB,v}} K_{v}^{\circ} = \prod_{i=1}^r \hat{T}_i$, where the 
	 maximal ideals of $T$ are $\mm_1, \ldots, \mm_r$ and $\hat{T}_i$ is the $\mm_v$-adic 
	 completion of $T_{\mm_i}$ (cf. \cite[Thm~8.15]{Matsumura_CRT_1989}). Note also that
	 the residue fields of $\OO_{\BB,v}$ and $K_v^{\circ}$ are canonically isomorphic.
	 Equation~\eqref{Equation: Affine intersection number} 
	 now becomes
	 	\be \label{Equation: Completed intersection number}
			\chern{\OO(f)}\cdot [\overline{x}] 
			= [k(v):k]\sum_{i=1}^r [k(\hat{t}_i):k(v)] \ord_{\hat{t}_i}(\hat{s}) ,
		\ee
	where $\hat{t}_1, \ldots, \hat{t}_r$ are the closed points of $\Spec \hat{T}$, and 
	$\hat{s}$ is the image of $s$ in the $\mm_v$-adic completion of $N$.
	
	By construction, base changing
	$\overline{x} \into \XX$ to $\Kvcirc$ gives a commutative diagram 
		\benn	\label{Diagram: Base change of closed immersion}
			\xymatrix@!0{
					 \Spec \hat{T} \ar@{=}[rr] &  
					& \overline{x}_{\Kvcirc} \ar[rr] \ar '[dr][ddrr] \ar[ddll] 
					& & \XX_{\Kvcirc} \ar[dd]  \ar[ddll]\\	 
					  & & & & & \\	
					  \overline{x} \ar[rr] \ar[ddrr] & & \XX \ar[dd] & & \Spec \Kvcirc \ar[ddll] \\
					  & & & & & \\
					  &  &  \BB & & 
			}
		\eenn
	The horizontal maps are closed immersions. By virtue of this diagram, we see that the points of
	$\orb(x)$ are in bijective correspondence with the 
	generic points of $\Spec \hat{T}$, which in turn are in bijective correspondence with the 
	generic points of the disjoint closed subschemes $\Spec \hat{T}_i$. 
	Moreover, given $y \in \orb(x)$ corresponding to the 
	generic point of $\Spec \hat{T}_i$, the $\Kvcirc$-morphism 
	$\tilde{y}:\Spec R \to \XX_{\Kvcirc}$ constructed
	above factors through the closed immersion $\Spec \hat{T}_i \into \XX_{\Kvcirc}$. This gives 
	the equality of residue fields $[k_R:k_v] = [k_R: k(\hat{t}_i)][k(\hat{t}_i):k(v)]$.
	
	Comparing~\eqref{Equation: f contribution versus lengths} and
	\eqref{Equation: Completed intersection number}, we see the proof will be complete 
	once we show that
		\benn
			\ord_{\hat{t}_i}(\hat{s})
			 = [k_R:k(\hat{t}_i)]\ord_R\left( \alpha_{\hat{y}}(\hat{\gamma}_1) / 
				\alpha_{\hat{y}}(\hat{\gamma}_2) \right).	
		\eenn
	Again for topological reasons, $\Spec R \to \Spec \hat{A}$ factors through $\Spec \hat{T}_i$, so 
	we find that the composition
	$\hat{A} \to \hat{T}_i \to R$ equals the homomorphism $\alpha_{\hat{y}}: \hat{A} \to R$ from
	before. Furthermore, the definitions are such that the elements $\hat{\sigma_j}$
	used to compute $\ord_{\hat{t}_i}(\hat{s})$ may be chosen to correspond to
	$\alpha_{\hat{y}}(\hat{\gamma_j})$ under the homomorphism $\hat{T}_i \to R$. As $R$ is
	the integral closure of $\hat{T}_i$ in $K_v(y)$, the desired equality is easily deduced from the following 
	well-known formula upon setting $S = \hat{T}_i$ and $a = \hat{\sigma}_i$.
\end{proof}

\begin{lem}[{\cite[Example A.3.1]{Fulton_Intersection_Theory_1998}}] 
	Let $S$ be a one-dimensional local noetherian domain. For any $a \in S$, we have 
		\[
			l_S(S/aS) = \sum_R l_R(R / aR)[R/\mm_R: S / \mm_S],
		\]
	where the sum is over all discrete valuation rings of the fraction field of $S$ that dominate $S$.
\end{lem}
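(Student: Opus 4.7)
The strategy is to reduce the problem to computing lengths over the integral closure of $S$, and then to localize at its maximal ideals.

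Let $K = \Frac(S)$ and let $\tilde{S}$ denote the integral closure of $S$ in $K$. By Krull-Akizuki, $\tilde{S}$ is a one-dimensional integrally closed noetherian domain, hence a Dedekind domain. Since $\tilde{S}$ is integral over the local ring $S$, its maximal ideals $\mathfrak{m}_1, \ldots, \mathfrak{m}_n$ all contract to $\mm_S$, and the DVRs of $K$ dominating $S$ are precisely the localizations $R_i = \tilde{S}_{\mathfrak{m}_i}$. In particular, the sum on the right-hand side of the lemma is finite.

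The first key step is to establish $l_S(S/aS) = l_S(\tilde{S}/a\tilde{S})$. Multiplication by $a$ is an $S$-module isomorphism $\tilde{S}/S \simarrow a\tilde{S}/aS$. Computing $l_S(\tilde{S}/aS)$ two ways, via the chains $aS \subseteq S \subseteq \tilde{S}$ and $aS \subseteq a\tilde{S} \subseteq \tilde{S}$, yields
\[
l_S(S/aS) + l_S(\tilde{S}/S) = l_S(a\tilde{S}/aS) + l_S(\tilde{S}/a\tilde{S}),
\]
and cancelling the equal terms $l_S(\tilde{S}/S) = l_S(a\tilde{S}/aS)$ gives the desired equality.

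Next, since $a$ lies in every $\mathfrak{m}_i$, the quotient $\tilde{S}/a\tilde{S}$ is Artinian, and the Chinese Remainder Theorem decomposes it as $\bigoplus_i R_i/aR_i$. Thus $l_S(\tilde{S}/a\tilde{S}) = \sum_i l_S(R_i/aR_i)$. For each $i$, an $R_i$-composition series of $R_i/aR_i$ has length $l_{R_i}(R_i/aR_i)$ with successive quotients isomorphic to $R_i/\mathfrak{m}_{R_i}$; as an $S$-module, this residue field is an $S/\mm_S$-vector space of dimension $[R_i/\mathfrak{m}_{R_i} : S/\mm_S]$. Summing over composition factors gives $l_S(R_i/aR_i) = l_{R_i}(R_i/aR_i)[R_i/\mathfrak{m}_{R_i} : S/\mm_S]$, which substitutes in to complete the proof.

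The main obstacle is the cancellation in the central equality, which is subtle when $\tilde{S}$ is not module-finite over $S$, for then $l_S(\tilde{S}/S)$ may be infinite and direct cancellation is merely formal. One handles this by applying the snake lemma to multiplication by $a$ on $0 \to S \to \tilde{S} \to \tilde{S}/S \to 0$: injectivity of $a$ on the domains yields a four-term exact sequence in which $\ker(a|_{\tilde{S}/S})$ and $\coker(a|_{\tilde{S}/S})$ are isomorphic, forcing the length equality whenever these torsion modules are individually finite. Alternatively, one exhausts $\tilde{S}$ by an ascending chain of finitely generated intermediate $S$-subalgebras and passes to the limit. In the applications here, $S$ is a complete local noetherian ring and hence Japanese, so $\tilde{S}$ is module-finite and the argument runs without any such subtlety.
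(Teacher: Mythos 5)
The paper does not actually prove this lemma --- it cites it as \cite[Example A.3.1]{Fulton_Intersection_Theory_1998} and uses it as a black box --- so your proposal can only be judged on its own merits. Your overall strategy (pass to the integral closure $\tilde S$, use Krull--Akizuki to see $\tilde S$ is a semilocal Dedekind domain whose localizations at maximal ideals are exactly the DVRs dominating $S$, then decompose $\tilde S/a\tilde S$ by CRT and count composition factors) is the standard route, and every step after the reduction $l_S(S/aS)=l_S(\tilde S/a\tilde S)$ is fine. That reduction is also fully correct when $\tilde S$ is module-finite over $S$, and you are right that this covers the paper's application: there $S$ is a complete local noetherian domain, hence Nagata, so its normalization is finite.

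The genuine gap is in the general case, which the lemma as stated does claim, and neither of your proposed repairs closes it. The snake lemma applied to multiplication by $a$ on $0\to S\to\tilde S\to\tilde S/S\to 0$ yields the exact sequence $0\to\ker(a|_{\tilde S/S})\to S/aS\to\tilde S/a\tilde S\to\coker(a|_{\tilde S/S})\to 0$; it does \emph{not} assert that the two end terms are isomorphic, and what you actually need is $l(\ker)=l(\coker)$. That equality is automatic when $\tilde S/S$ has finite length (the Herbrand argument), but finiteness of the kernel and cokernel \emph{individually} does not force it: for the Pr\"ufer-type module $M=\ZZ[1/p]/\ZZ$ over $\ZZ_{(p)}$, multiplication by $p$ is surjective with kernel of length $1$. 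So in the infinite-length case the snake lemma merely restates the problem. The exhaustion argument has the same defect: writing $\tilde S=\bigcup B$ over module-finite intermediate rings $B$, the images of $B/aB$ in $\tilde S/a\tilde S$ exhaust it, which gives $l_S(\tilde S/a\tilde S)\le l_S(B/aB)=l_S(S/aS)$, but the reverse inequality requires the maps $B/aB\to\tilde S/a\tilde S$ to be eventually injective (i.e.\ $a\tilde S\cap B=aB$ for some $B$), which is not obvious and is where the real work lies. You should either restrict the lemma's hypotheses to the case actually used (normalization finite, e.g.\ $S$ complete or excellent), or supply the additional argument for the general case.
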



\subsection{Associated Measures} \label{Section: Associated measure}

	As before, let $X$ be a projective variety of dimension $d$ over the function field $K$, and fix
	a place $v$ for the entirety of this section. For semipositive metrized line bundles 
	$\met{L}_1, \ldots, \met{L}_d$ on $X$, we will define a bounded Borel measure 
	$\chern{\met{L}_1}\cdots \chern{\met{L}_d}$ on $\an{X}_v$. In order to avoid extra notation, we
	do not indicate the dependence of the measure on the place $v$ as it will be apparent from context.
	Any model function $f: \an{X}_v \to \RR$ induces an 
	integrable metrized line bundle $\met{O_X(f)}$ on $X$, and the measure is defined by
		\benn
			\int_{\an{X}_v} f \ \chern{\met{L}_1}\cdots \chern{\met{L}_d} = 
			\achern{\met{O_X(f)}}\achern{\met{L}_1} \cdots \achern{\met{L}_d}.
		\eenn
	This approach through global intersection theory has the advantage of being technically
	easy to define. However, it obscures the fact (which we shall prove) that the measure depends 
	only on the metrics of the $\met{L}_i$ at the place $v$. One could also develop local intersection
	theory on formal schemes over $\Kvcirc$ and define the associated measures purely in terms of
	local intersection products. This is the viewpoint taken by Gubler; for a nice synopsis of the 
	properties of local intersection theory, see \cite[\S2]{Gubler_Bogomolov_2007}. These measures
	were originally defined by Chambert-Loir \cite{Chambert-Loir_Measures_2005} in the number
	field case using the formula of Theorem~\ref{Theorem: Measure Properties}(i) below, 
	and then by passing to the limit using 
	the local intersection theory of Gubler \cite{Gubler_Local_Heights_of_Subvarieties_1998}. 
	
	For $f$ a continuous function on $\an{X}_v$, define $\met{O_X(f)}$ to be 
	the adelic metrized line bundle with underlying bundle $O_X$, the trivial 
	metric at all places $w \not= v$ and the metric $\|1(x)\|_v = e^{-f(x)}$ at 
	$v$. 
	
\begin{lem} \label{Lemma: Continuous functions are integrable}
	If $f$ is a model function on $\an{X}_v$, the adelic metrized line bundle $\met{O_X(f)}$ is integrable.
\end{lem}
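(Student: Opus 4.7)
The plan is to exhibit $\met{O_X(f)}$ explicitly as $\met{L}_1 \otimes \met{L}_2^{\vee}$ for two semipositive metrized line bundles $\met{L}_1, \met{L}_2$ induced by ample line bundles on a common $\BB$-model.

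First I would invoke Yuan's lemma (Lemma~\ref{Lemma: Lifting Model Functions}) to obtain a $\BB$-model $(\XX, \OO(f))$ of $(X, O_X^e)$ whose induced metrics (under the $e$-th root convention) agree with those of $\met{O_X(f)}$. Because $\XX$ is projective over $\BB$ and $\BB$ is projective over $k$, the scheme $\XX$ is projective over $k$ and therefore admits an ample line bundle $\mathcal{A}$; write $M = \mathcal{A}|_X$. By the standard fact that tensoring any line bundle with a sufficiently high power of an ample bundle produces an ample bundle, I may fix $N \gg 0$ so that both
\[
	\mathcal{M}_1 := \OO(f) \otimes \mathcal{A}^{eN}
	\qquad \text{and} \qquad
	\mathcal{M}_2 := \mathcal{A}^{eN}
\]
are ample on $\XX$, hence relatively semipositive. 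Each has generic fiber canonically isomorphic to $(M^N)^e$ via the canonical identification $O_X^e \cong O_X$, so each is a $\BB$-model of the pair $(X, (M^N)^e)$. Let $\met{L}_1$ and $\met{L}_2$ be the associated adelic metrized line bundles with common underlying bundle $M^N$; each is semipositive as witnessed by the constant sequence of models with $\LL_n = \mathcal{M}_i$ and $e_n = e$.

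To finish I would verify that $\met{L}_1 \otimes \met{L}_2^{\vee}$ equals $\met{O_X(f)}$. The underlying bundle is $M^N \otimes (M^N)^{\vee} = O_X$. At a place $w$, the metric is the $e$-th root of the ratio of the formal metrics induced by $\mathcal{M}_1$ and $\mathcal{M}_2$; by multiplicativity of formal metrics under tensor product, this ratio collapses to the formal metric on $O_X^e$ coming from $\OO(f)$, which by Yuan's lemma is trivial for $w \neq v$ and equals $\|\cdot\|_v$ at $v$. Taking the $e$-th root recovers the trivial metric away from $v$ and $\|\cdot\|_v^{1/e} = e^{-f}$ at $v$, matching $\met{O_X(f)}$.

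The main obstacle is nothing deep, just careful bookkeeping: one must keep straight when a model represents $(X, L)$ versus $(X, L^e)$, because the metric convention (raw formal metric versus $e$-th root) changes accordingly. The twist by $\mathcal{A}^{eN}$ rather than $\mathcal{A}^N$ is chosen precisely so that the natural exponent on the generic fiber is $e$, matching the convention used to define $\met{O_X(f)}$ and so that the ratio-of-metrics calculation lands on $e^{-f}$ and not on $e^{-ef}$.
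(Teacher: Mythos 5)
Your proposal is correct and follows essentially the same route as the paper: apply Yuan's lemma to get a global model $(\XX,\OO(f))$ of $(X,O_X^e)$, then use projectivity of $\XX$ to write $\OO(f)$ as a difference of ample (hence relatively semipositive) line bundles inducing semipositive metrized bundles whose difference is $\met{O_X(f)}$. Your version merely makes explicit the ample decomposition (twisting by $\mathcal{A}^{eN}$) and the exponent bookkeeping that the paper leaves implicit.
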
	

\begin{proof}	
	If $f = -\log\metric_v^{1/e}$, then Yuan's Lemma 
	(Lemma~\ref{Lemma: Lifting Model Functions}) allows one to construct a $\BB$-model 
	$(\XX, \OO(f))$ of $(X, O_X^e)$ such that the associated adelic metrized line bundle is $\met{O_X(f)}$. 
	As $\XX$ is projective, we can write 
	$\OO(f) = \LL_1 \otimes \LL_2^{\vee}$ for some ample line bundles $\LL_1$, $\LL_2$ on $\XX$. 
	They are \textit{a fortiori} relatively semipositive, and so they induce semipositive metrized line
	bundles $\met{L}_1, \met{L}_2$ such that $\met{O_X(f)} = \met{L}_1 \otimes \met{L}_2^{\vee}$.
	Thus $\met{O_X(f)}$ is integrable.
\end{proof}

\begin{lem} \label{Lem: Bounded linear}
	Let $X$ be a projective variety of dimension~$d$ over the function field $K$ and $v$ a place of $K$. For any
	choice of semipositive metrized line bundles $\met{L}_1, \ldots, \met{L}_d$, the association
		\[
			f \mapsto \achern{\met{O_X(f)}}\achern{\met{L}_1} \cdots \achern{\met{L}_d}
		\]
	defines a bounded linear functional on the $\QQ$-vector space of model functions on $\an{X}_v$
	with the uniform norm.
\end{lem}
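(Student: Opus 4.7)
The plan is to establish linearity as a formal consequence of the multilinearity of the arithmetic intersection pairing (Theorem~\ref{Theorem: Intersection properties}(iii)), and boundedness as a direct application of the metric-distance bound (Theorem~\ref{Theorem: Intersection properties}(ii)).

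For linearity, the first step is to observe that for model functions $f$ and $g$, there is a canonical identification $\met{O_X(f+g)} \cong \met{O_X(f)} \otimes \met{O_X(g)}$ of adelic metrized line bundles: the underlying algebraic bundle is identified via $1 \otimes 1 \mapsto 1$, the metrics at any place $w \neq v$ are trivial on both sides, and at $v$ the definition $\|1(x)\|_v = e^{-f(x)}$ gives multiplicativity of the norms in $f$. Scaling $f \mapsto nf$ for a positive integer $n$ similarly corresponds to $\met{O_X(f)}^{\otimes n}$. By Lemma~\ref{Lemma: Continuous functions are integrable}, each $\met{O_X(f)}$ is integrable, so the intersection number on the right-hand side of the definition is meaningful, and multilinearity of the intersection pairing gives additivity and integer homogeneity of the functional. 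Homogeneity over $\QQ$ follows by applying the integer case to $mf$ for common denominators $m$.

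For boundedness, the key observation is that both $\met{O_X(f)}$ and the trivially metrized $\met{O_X}$ are integrable metrized line bundles with the same underlying algebraic bundle $O_X$. Theorem~\ref{Theorem: Intersection properties}(ii) therefore yields
\benn
\left|\achern{\met{O_X(f)} \otimes \met{O_X}^{\vee}}\achern{\met{L}_1}\cdots \achern{\met{L}_d}\right|
\leq \deg_{L_1,\ldots,L_d}(X)\sum_{w \in \BB}[k(w):k]\,\dist_w\!\left(\metric_{O_X(f),w},\metric_{O_X,w}\right).
\eenn
At every place $w \neq v$ the two metrics coincide (both are the trivial metric), so all terms in the sum vanish except at $w = v$, where
\benn
\dist_v\!\left(\metric_{O_X(f),v},\metric_{O_X,v}\right) = \max_{x\in \an{X}_v}\left|\log e^{-f(x)}\right| = \|f\|_\infty.
\eenn
Since tensoring with $\met{O_X}^\vee$ does not change the intersection number (by multilinearity, as the trivial metrized bundle has underlying section $1$ with norm $1$ everywhere and models given by the trivial bundle on any model of $X$), we conclude
\benn
\left|\achern{\met{O_X(f)}}\achern{\met{L}_1}\cdots \achern{\met{L}_d}\right|
\leq [k(v):k]\,\deg_{L_1,\ldots,L_d}(X)\,\|f\|_\infty,
\eenn
which is exactly the required boundedness with operator norm at most $[k(v):k]\deg_{L_1,\ldots,L_d}(X)$.

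No step is particularly difficult; the main conceptual point is simply to verify that $\met{O_X(f)}$ really differs from $\met{O_X}$ only at $v$, so that the global sum in Theorem~\ref{Theorem: Intersection properties}(ii) collapses to a single term controlled by $\|f\|_\infty$. The slight subtlety in linearity is ensuring the $\QQ$-scalar action is compatible with the tensor algebra of adelic metrized line bundles, which is handled by clearing denominators.
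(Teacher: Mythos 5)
Your proof is correct and follows essentially the same route as the paper: additivity via the identification $\met{O_X(f+g)} = \met{O_X(f)} \otimes \met{O_X(g)}$ together with multilinearity of the pairing, $\QQ$-homogeneity by clearing denominators (using that $\tfrac{1}{n}f$ is again a model function), and boundedness from Theorem~\ref{Theorem: Intersection properties}(ii). The only difference is that the paper leaves the boundedness step as a one-line citation, whereas you carry out the computation showing the distance sum collapses to the place $v$ and yields the explicit bound $[k(v):k]\deg_{L_1,\ldots,L_d}(X)\,\|f\|_\infty$, which is consistent with the total mass computed in Theorem~\ref{Theorem: Measure Properties}(v).
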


\begin{proof}
	It is apparent from the definition that if $f$ and $g$ are two model functions, then 
	$\met{O_X(f)} \otimes \met{O_X(g)} = \met{O_X(f+g)}$. Thus the map in question is
	additive. Next note that $\frac{1}{n}f$ is a model function for any $n \geq 1$ whenever
	$f$ is a model function (view $O_X$ as $O_X^n$ via the isomorphism $1 \mapsto 1^{\otimes n}$). 
	It is therefore an easy consequence of additivity that the map in question is $\QQ$-linear as
	desired. 
	
	The map is bounded by Theorem~\ref{Theorem: Intersection properties}(ii).
\end{proof}

	The space of model functions on $\an{X}_v$ is dense in the linear space 
	$\mathcal{C}(\an{X}_v, \RR)$ of real-valued continuous functions endowed with the uniform norm 
	(Lemma~\ref{Lemma: Density of Model Functions}), so the association in 
	the previous lemma extends to a bounded linear functional on $\mathcal{C}(\an{X}_v, \RR)$.
	By the Riesz representation theorem, we may identify it with a Borel measure on 
	$\an{X}_v$. Denote this measure by $\chern{\met{L}_1} \cdots \chern{\met{L}_d}$. Evidently
	we require $d= \dim X \geq 1$ for this notation to be sensible, an annoyance we will remedy
	at the end of this section.

\begin{thm} \label{Theorem: Measure Properties}
	Let $X$ be a projective variety of dimension~$d$ over the function field $K$ and $v$ a place of $K$. If 
	$\met{L}_1, \ldots, \met{L}_d$ denote semipositive metrized line bundles on $X$, then the 
	following properties hold for the measures $\chern{\met{L}_1}\cdots \chern{\met{L}_d}$:
	
	\begin{enumerate}
		
		\item Suppose $X$ is normal, that $\XX$ is a normal $\BB$-model of $X$, 
			and that $\LL_1, \ldots, \LL_d$ are models on $\XX$ of $L_1^{e_1}, \ldots, L_d^{e_d}$, 
			respectively, that induce the metrized line bundles $\met{L}_1, \ldots, \met{L}_d$. Let
			$[\XX_v] = \sum m(j) [W_j]$ with each $W_j$ irreducible, and let $\delta_{\xi_j}$ denote
			the Dirac measure at the unique point $\xi_j \in \an{X}_v$ that reduces to the generic point 
			of $W_j$.
			(See the remarks preceding Lemma~\ref{Lemma: Metric bound on intersections}.) Then
				\[
					\chern{\met{L}_1}\cdots \chern{\met{L}_d} =
					\sum_j m(j) \ \frac{\chern{\LL_1}\cdots \chern{\LL_d}\cdot [W_j]}
					{e_1 \cdots e_d} \delta_{\xi_j}.
				\]

		\item $\chern{\met{L}_1}\cdots \chern{\met{L}_d}$ is symmetric and multilinear in
			$\met{L}_1, \ldots, \met{L}_d$.
			
		\item $\chern{\met{L}_1}\cdots \chern{\met{L}_d}$ is a nonnegative measure.\footnote{
			Measure theory texts would called this a positive measure.} 
			
		\item If $\met{L}_1$ and $\met{L}_1'$ have the same underlying algebraic bundle and
			identical metrics at the place $v$, then 
			 	$\chern{\met{L}_1}\cdots \chern{\met{L}_d} =
				\chern{\met{L}_1'}\cdots \chern{\met{L}_d}$ as measures on $\an{X}_v$.
				
		\item The measure $\chern{\met{L}_1}\cdots \chern{\met{L}_d}$ has total mass
			$[k(v):k]\deg_{L_1, \ldots, L_d}(X)$.
			
		\item If $Y$ is another projective $K$-variety and $\varphi: Y \to X$ is a generically finite 
			surjective morphism, then
			\[
				\varphi_* \left\{ \chern{\varphi^*\met{L}_1}\cdots \chern{\varphi^*\met{L}_d}
				\right\} = \deg(\varphi) \chern{\met{L}_1}\cdots \chern{\met{L}_d}.
			\]

	\end{enumerate}
\end{thm}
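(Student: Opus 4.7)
The plan is to prove (i) first and then derive the remaining parts from it. For (i), I would unpack $\int f\,\chern{\met{L}_1}\cdots \chern{\met{L}_d}$ for a model function $f$ using the definition as $\achern{\met{O_X(f)}}\achern{\met{L}_1}\cdots \achern{\met{L}_d}$. Yuan's Lemma~\ref{Lemma: Lifting Model Functions} realizes $\met{O_X(f)}$ by a $\BB$-model $(\XX',\OO(f))$ whose defining rational section $s$ has divisor supported in the fiber over $v$, and the Simultaneous Model Lemma~\ref{Lemma: Simultaneous Model Lemma} transports $\OO(f)$ together with the $\LL_i$ onto a common $\BB$-model. This common model can be replaced by its normalization without affecting either the formal metrics or the classical intersection numbers (via Lemma~\ref{Lemma: Pullback of metrics} and the projection formula, as in the proof of Lemma~\ref{Lemma: Metric bound on intersections}), so we may assume it is the given normal $\XX$. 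Theorem~\ref{Theorem: Intersection properties}(i) then rewrites the arithmetic intersection classically, and the expansion $[\Div(s)]=\sum_j e\,m(j)\,f(\xi_j)[W_j]$ furnished by~\eqref{Equation: Model Function value versus order function} makes the resulting coefficients match the claimed weights of the Dirac masses exactly.

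Parts (ii), (iii), (v), and (vi) follow quickly from (i) together with Theorem~\ref{Theorem: Intersection properties}. Symmetry and multilinearity in (ii) are inherited from the symmetry and multilinearity of the arithmetic intersection pairing. For (iii), the formula in (i) expresses the model-induced measure as a nonnegative combination of Dirac masses, since each relatively semipositive $\LL_i$ is nef on every fiber component $W_j$ (which has dimension $d$ by flatness of $\XX\to\BB$); the general semipositive case then follows by uniform approximation, since weak limits of nonnegative measures on the compact space $\an{X}_v$ remain nonnegative. For (v), summing the coefficients in the formula of (i) yields total mass $\chern{\LL_1}\cdots \chern{\LL_d}\cdot [\XX_v]/(e_1\cdots e_d)$, which by Lemma~\ref{Lemma: Conservation of Number} equals $[k(v):k]\deg_{L_1,\ldots,L_d}(X)$; for arbitrary semipositive bundles this is preserved in the weak limit because weak convergence on a compact space preserves total mass. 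For (vi), the pullback of a model function along $\varphi_v:\an{Y}_v\to\an{X}_v$ is again a model function, and indeed $\varphi^*\met{O_X(f)}=\met{O_Y(f\circ \varphi_v)}$; Theorem~\ref{Theorem: Intersection properties}(iv) then gives $\int (f\circ \varphi_v)\,d\nu=\deg(\varphi)\int f\,d\mu$, which by density of model functions is the desired pushforward identity.

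The heart of the argument is (iv). By density of model functions it suffices to show $\achern{\met{O_X(f)}}\achern{\met{L}_1\otimes (\met{L}_1')^\vee}\achern{\met{L}_2}\cdots \achern{\met{L}_d}=0$ for every model function $f$. I would dispatch first the case that all bundles come from models on a common normal $\BB$-model $\XX$: equality of the $v$-metrics of $\met{L}_1$ and $\met{L}_1'$ translates, after passing to the common tensor power $e_1e_1'$, into equality of the formal metrics on $\LL_1^{e_1'}$ and $(\LL_1')^{e_1}$ over the $v$-adic formal completion of $\XX$. Since a formal metric determines its formal line bundle up to canonical isomorphism, restricting to the closed fiber yields $\LL_1^{e_1'}|_{\XX_v}\cong (\LL_1')^{e_1}|_{\XX_v}$. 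Because the divisor of $\OO(f)$ lies in $\XX_v$, the intersection reduces, by the calculation that underlies Lemma~\ref{Lemma: Intersection vs evaluation of f}, to a sum of terms on the fiber components $W_j$ featuring the trivial class $\chern{\LL_1^{e_1'}|_{W_j}\otimes ((\LL_1')^{e_1}|_{W_j})^\vee}=0$, and so vanishes. The main obstacle is the general semipositive case: naive approximation by model-induced sequences $\met{L}_{1,n},\met{L}_{1,n}'$ will in general have distinct $v$-metrics at each finite stage, so the model-case argument does not apply termwise. To circumvent this I would choose the approximating sequences compatibly---arranging that the $v$-metrics of $\met{L}_{1,n}$ and $\met{L}_{1,n}'$ coincide at each $n$ while converging to the common limit $\metric_{L_1,v}=\metric_{L_1',v}$, by fixing a single sequence of formal metrics at $v$ and realizing it through suitable relatively semipositive models that approximate $\met{L}_1$ (respectively $\met{L}_1'$) at the other places. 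The model case then forces $\mu_n=\mu_n'$ for every $n$, and weak convergence of measures on the compact space $\an{X}_v$ finishes the proof.
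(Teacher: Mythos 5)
Parts (i), (ii), (iii), (v) and (vi) of your proposal are correct and essentially coincide with the paper's arguments: (i) is the computation with Yuan's rational section supported in $\XX_v$ combined with \eqref{Equation: Model Function value versus order function}, and the remaining parts are the same reductions to Theorem~\ref{Theorem: Intersection properties} and to the model-case formula (the paper proves (iii) and (v) directly from Yuan's lemma and Lemma~\ref{Lemma: Conservation of Number} rather than by summing the coefficients in (i), but the content is identical).

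The gap is in (iv), exactly at the step you yourself flag as the main obstacle. Your model case is fine: if $\LL_1$ and $\LL_1'$ induce the same $v$-metric, then by \eqref{Equation: Model Function value versus order function} the rational section $1$ of $\LL_1^{e_1'}\otimes\left((\LL_1')^{e_1}\right)^{\vee}$ has order zero along every component of $\XX_v$, so this bundle is trivial near $\XX_v$ and its intersection with $\OO(f)$ (whose section has divisor inside $\XX_v$) vanishes. But your reduction of the general case to this one rests on producing, for each $n$, \emph{relatively semipositive} models of powers of $L_1$ that approximate $\met{L}_1$ and $\met{L}_1'$ respectively at all places while having \emph{identical} $v$-metrics. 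You give no construction, and the obvious one --- twisting one approximating sequence by $\OO(g_n)$ for a model function $g_n$ so as to force the $v$-metrics to match --- destroys relative semipositivity; there is no reason one can prescribe the $v$-metric and simultaneously approximate the metrics of $\met{L}_1'$ elsewhere while remaining nef. (The step is reparable: your model-case vanishing never uses semipositivity of the first factor, so one may twist by $\OO(g_n)$, accept merely integrable model bundles there, and control the limit with Theorem~\ref{Theorem: Intersection properties}(ii); but as written the existence claim is unjustified.) The paper sidesteps the problem entirely: $\met{L}_1\otimes(\met{L}_1')^{\vee}$ is an integrable metrized line bundle with underlying bundle $O_X$ whose metric is trivial at $v$ and nontrivial at only finitely many places $w_1,\ldots,w_n$, all distinct from $v$, hence it equals $\met{O_X(f_{w_1})}\otimes\cdots\otimes\met{O_X(f_{w_n})}$ \emph{exactly}. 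By multilinearity one reduces to showing $\achern{\met{O_X(g)}}\achern{\met{O_X(f_w)}}\achern{\met{L}_2}\cdots\achern{\met{L}_d}=0$ for a single $w\neq v$, which follows because Yuan's lemma supplies sections of $\OO(g)$ and $\OO(f_w)$ with divisors supported in the disjoint fibers $\XX_v$ and $\XX_w$. No compatible choice of approximants is needed.
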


\begin{proof}

	(i) It suffices to prove that both measures integrate the same way against a model function of the form
		$f = -\log \|1\|_v$.  Use Yuan's Lemma to choose a model
		$(\XX, \OO(f))$ of $(X, O_X)$ and a rational section $s$ of $\OO(f)$ such that the support of $[\Div(s)]$
		is contained in the fiber $\XX_v$. Then we have 
			\benn
				\ba
					\int_{\an{X}_v} f \ \chern{\met{L}_1}\cdots \chern{\met{L}_d} &=
					\frac{\chern{\OO(f)}\chern{\LL_1}\cdots \chern{\LL_d}}{e_1 \cdots e_d} \\
					&= \sum_j \ord_{W_j}(s) \ \frac{\chern{\LL_1}\cdots \chern{\LL_d} \cdot 
					[W_j]}{e_1 \cdots e_d}.
				\ea
			\eenn
		Applying \eqref{Equation: Model Function value versus order function}
		of section~\ref{Section: Intersection Numbers} shows us that
		$\ord_{W_j}(s) = m(j) f(\xi_j)$, which implies the result.
		
	(ii) This follows immediately from Theorem~\ref{Theorem: Intersection properties}(iii).
	
	(iii) It suffices to show that if $f=-\log \|1\|_v$ is a nonnegative model function, then  
		$\int_{\an{X}_v} f \chern{\met{L}_1} \cdots \chern{\met{L}_d} \geq 0$. We may also
		assume that all of our metrized line bundles are induced by models $\LL_1, \ldots, \LL_d$
		by using a limit argument. 
		Apply Yuan's lemma to get a line bundle $\OO(f)$ that induces the metrized line bundle
		$\met{O_X(f)}$, and let $s$ be a rational section of $\OO(f)$ whose associated divisor is
		supported in $\XX_v$. As $f \geq 0$, we deduce that $[\Div(s)]$ is effective (cf. 
		\eqref{Equation: Model Function value versus order function}). Then
			\[
				\int_{\an{X}_v} f \chern{\met{L}_1} \cdots \chern{\met{L}_d} =
				\chern{\LL_1}\cdots\chern{\LL_d} \cdot [\Div(s)] \geq 0 
			\]
		because the intersection of relatively semipositive line bundles on components of the
		fiber $\XX_v$ is nonnegative.

	(iv) It suffices to show that
		\[
			\int_{\an{X}_v} g \ \chern{\met{L}_1} \cdots \chern{\met{L}_d} = 
			\int_{\an{X}_v} g \ \chern{\met{L}'_1} \cdots \chern{\met{L}_d}	
		\]
	for any model function $g: \an{X}_v \to \RR$. In terms of intersection numbers, we must show
		\[
			\achern{\met{O_X(g)}} \achern{\met{L}_1} \cdots \achern{\met{L}_d} =
			\achern{\met{O_X(g)}} \achern{\met{L}'_1} \cdots \achern{\met{L}_d}.
		\]
	By linearity, this reduces to proving that if $\met{L}_1$ is an integrable metrized line bundle with
		underlying bundle $O_X$ and the trivial metric at $v$, and if $\met{L}_2, \ldots, \met{L}_d$ 
		are arbitrary semipositive metrized line bundles, then 
			\[
				\achern{\met{O_X(g)}} \achern{\met{L}_1} \cdots \achern{\met{L}_d} = 0.
			\]
			
		We know $\met{L}_1$ must have the trivial metric at almost all places, so there exist
		finitely many places $w_1, \ldots, w_n$ of $K$ and continuous functions $f_{w_i}: \an{X}_{w_i} \to \RR$
		such that 
			\[
				\met{L}_1 = \met{O_X(f_{w_1})} \otimes \cdots \otimes \met{O_X(f_{w_n})}.
			\]
		 We may
		assume that no $w_i = v$. Again by linearity, we may reduce to the case 
		$\met{L}_1 = \met{O_X(f_w)}$ for some continuous
		function $f_w$ with $w \not= v$. By a limit argument, we may further suppose that
		$f_w$ is a model function and that $\met{L}_2, \ldots, \met{L}_d$ are induced by 
		models $\LL_2, \ldots, \LL_d$ on some $\BB$-model $\XX$. Using Yuan's lemma
		(and the Simultaneous Model Lemma), we can find a line bundle $\OO(f_w)$ on $\XX$
		that induces $\met{O_X(f_w)}$ and a rational section $s$ of $\OO(f_w)$ with associated divisor supported
		entirely in the fiber $\XX_w$. Finally, 
		use Yuan's lemma again to get a line bundle $\OO(g)$ on $\XX$ (perhaps after replacing
		$\XX$ with a dominating model) equipped with a rational section $t$ whose divisor is supported in 
		$\XX_v$. Then
			\[
				\achern{\met{O_X(g)}} \achern{\met{L}_1} \cdots \achern{\met{L}_d} =
			 	\chern{\OO(g)} \chern{\OO(f_w)} \chern{\LL_2} \cdots \chern{\LL_d} = 0, 
			\]
		since the section $t$ is regular and invertible when restricted to $\XX_w$. Thus (iv) is proved.
					
	(v) Take any $\BB$-model $\XX$ of $X$. The cycle $[\XX_v]$ is a Cartier divisor, and we can 
	use it to define the constant model function $1$. Indeed, if $\pi$ is a uniformizer of $\OO_{\BB,v}$, 
	then $\pi$ is a local equation for $\XX_v$ on $\XX$. Consider the line bundle
	$\OO_{\XX}([\XX_v])$; it induces the metrized
	 line bundle $\met{O_X(f)}$, where $f(x) = -\log\|1(x)\|_v = -\log |\pi|_v = 1$. 
	 
	 By a limiting argument we may assume that the metrized line bundles
	 $\met{L}_1, \ldots, \met{L}_d$ are induced by models 
	 $\LL_1, \ldots, \LL_d$ of $L_1^{e_1}, \ldots, L_d^{e_d}$, respectively. Now
	 Lemma~\ref{Lemma: Conservation of Number} shows
		\[
			\int_{\an{X}_v} 1 \  \chern{\met{L}_1}\cdots \chern{\met{L}_d} =
				\frac{\chern{\LL_1}\cdots \chern{\LL_d}\cdot [\XX_v]}{e_1\cdots e_d}  =
				\frac{[k(v):k] \deg_{L_1^{e_1}, \ldots, L_d^{e_d}}(X)}{e_1\cdots e_d}.
		\]
				
	(vi) This is an easy consequence of Theorem~\ref{Theorem: Intersection properties}(iv)
		and the fact that $\met{O_Y(f \circ \varphi)} = \varphi^*\met{O_X(f)}$ for any continuous
		function $f$.
\end{proof}

	Parts (iii) and (v) of the above theorem indicate a natural normalization for these measures. For
	a semipositive metrized line bundle $\met{L}$ with ample underlying bundle $\met{L}$ and a place
	$v$ of $K$, define a probability measure by
		\[
			\mu_{\met{L},v} = \frac{\chern{\met{L}}^d}{[k(v):k] \deg_L(X)}.
		\]
	Given any subvariety $Y \subset X$, we can similarly define a probability measure supported on 
	$\an{Y}_v$ since $\met{L}|_Y$ is also semipositive. If $j:\an{Y}_v  \hookrightarrow \an{X}_v$ is
	the canonical inclusion, then we set
		\[
			\mu_{Y, \met{L}, v} = \frac{j_*\left\{\chern{\met{L}|_Y}^{\dim Y}\right\}}
			{[k(v):k] \deg_L(Y)}.
		\]
	When $Y=X$, we see immediately that $\mu_{Y, \met{L}, v} = \mu_{\met{L},v}$. 
	
	Finally, we want to define $\mu_{\{x\}, \met{L}, v}$ for a closed point $x \in X$. For any model
	function $f$ on $\an{X}_v$, define
		\benn
			\int_{\an{X}_v} f d\mu_{\{x\}, \met{L}, v} 
			= \frac{\achern{\met{O_X(f)}|_{\{x\}}}}{[k(v):k]\deg(x)}.			
		\eenn
 	The proofs of Lemma~\ref{Lem: Bounded linear} and Theorem~\ref{Theorem: Measure Properties}
	apply here to show that $\mu_{\{x\}, \met{L}, v}$ extends to a Borel probability measure
	on $\an{X}_v$. Evidently it is independent of the semipositive line bundle $\met{L}$, but we have 
	chosen to retain it in the notation to preserve symmetry with $\mu_{Y, \met{L}, v}$ when $Y$ is
	a higher dimensional subvariety. By Lemma~\ref{Lemma: Intersection vs evaluation of f}, we have
	the appealing formula
		\benn \label{Equation: muL for a point}
			\mu_{\{x\}, \met{L}, v} =  
			\frac{1}{\deg(x)} \sum_{y \in \orb(x)} \deg_v(y) \delta_y,
		\eenn
	where $\delta_y$ is the point measure supported at $y$.


\subsection{Global Height Functions}
\label{Section: Heights}

	In this section we define normalized height functions associated to semipositive metrized line
	bundles. One of the most useful properties of height functions 
	with regard to arithmetic intersection theory is the transformation law that they satisfy when one
	changes some of the metrics by a constant. For example, this property will allow us to define
	canonical height functions and invariant measures associated to a dynamical system. 
	
	Suppose $L$ is an ample line bundle on $X$ and $\met{L}$ is a semipositive metrized line bundle with
	underlying bundle $L$. Then $\met{L}|_Y$ is semipositive for any subvariety
	$Y \subset X$. We define the height of such a subvariety by
		\[
			h_{\met{L}}(Y) = \frac{\achern{\met{L}|_Y}^{\dim Y + 1}}{(\dim Y + 1) \deg_L(Y)}.
		\]

	Recall that $\met{O_X(b)}$ is defined to be the adelic metrized line bundle with underlying bundle 
	$O_X$, the trivial metric at all places $w \not= v$ and the metric $\|1(x)\|_v = e^{-b}$ at 
	$v$.

\begin{thm} \label{Thm: General Heights}
	Suppose $X$ is a projective variety over $K$, $L$ is an ample line bundle on $X$, and
	$\met{L}$ is any semipositive metrized line bundle with underlying bundle $L$. 
		\begin{enumerate}
			\item If $\met{L}'$ is another semipositive metrized line bundle with the
				same underlying algebraic bundle $L$, then there exists a positive constant $C$ 
				such that for any subvariety $Y$ of $X$,
					\[
						\left| h_{\met{L}}(Y) - h_{\met{L}'}(Y) \right| \leq C.
					\]
				In fact, we may take 
					\be \label{Eqn: Height bound}
						C = \sum_v [k(v):k] \dist_v(\metric_{\met{L},v}, \metric_{\met{L}',v}).
					\ee		
						
			\item Fix a real number $b$ and a place $v$ of $K$. Then the adelic metrized line bundle
				$\met{L} \otimes \met{O_X(b)}$ is semipositive, and for any subvariety $Y$, we have
					\[
						h_{\met{L} \otimes \met{O_X(b)}}(Y) = h_{\met{L}}(Y) + b[k(v):k].
					\]
			
			\item Given any closed point $x$ and rational section $s$ of $L$ such that
				$x \not\in \supp \left(\Div(s)\right)$, we have the following local decomposition:
					\benn
						h_{\met{L}}(x) = \frac{-1}{\deg(x)}
						\sum_{v \in \BB} [k(v):k] \sum_{y \in \orb(x)} \deg_v(y) \log \|s(y)\|_{\met{L},v}.
					\eenn
		\end{enumerate}
\end{thm}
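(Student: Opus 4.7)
The plan is to handle the three parts in order, each by reducing to classical intersection theory on $\BB$-models and passing to a limit.

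For part (i), I expand the difference by multilinearity: writing $d = \dim Y$, the telescoping identity
\[
\achern{\met{L}|_Y}^{d+1} - \achern{\met{L}'|_Y}^{d+1} = \sum_{i=0}^{d} \achern{\met{L}|_Y}^{i} \ \achern{\met{L}|_Y \otimes (\met{L}'|_Y)^{\vee}} \ \achern{\met{L}'|_Y}^{d-i}
\]
holds, and the central factor has underlying algebraic bundle $O_Y$. Theorem~\ref{Theorem: Intersection properties}(ii) then bounds each of the $d+1$ terms by $\deg_L(Y) \sum_v [k(v):k]\, \dist_v(\metric_{\met{L},v},\metric_{\met{L}',v})$, and dividing by $(d+1)\deg_L(Y)$ in the definition of the height produces exactly the constant $C$ of~\eqref{Eqn: Height bound}.

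For part (ii), I first establish semipositivity of $\met{L}\otimes\met{O_X(b)}$. For rational $b=p/q$, take a sequence of relatively semipositive $\BB$-models $(\XX_n,\LL_n)$ of $(X,L^{e_n})$ whose metrics converge to those of $\met{L}$, and set $\LL_n' = \LL_n^{q}\otimes \OO_{\XX_n}(e_n p[\XX_{n,v}])$, viewed as a $\BB$-model of $L^{e_n q}$. The vertical divisor $[\XX_{n,v}]=\pi_n^{*}[v]$ has zero intersection with every curve contained in any fiber of $\pi_n$ (replace $[v]$ on $\BB$ by a linearly equivalent divisor disjoint from $v$), so $\LL_n'$ is relatively semipositive. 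Formula~\eqref{Equation: Model Function value versus order function} shows that the metric on $L$ induced by $\LL_n'$ at $v$ is $\metric_{\LL_n,v}^{1/e_n}\cdot e^{-p/q}$, which converges uniformly to the metric of $\met{L}\otimes\met{O_X(b)}$; at $w\neq v$ the new factor is trivial. Irrational $b$ is handled by a diagonal argument. For the transformation law, I apply the same telescoping trick to $\met{L}$ and $\met{L}'=\met{L}\otimes\met{O_X(b)}$, so each central factor is $\met{O_X(b)}$. For rational $b=p/q$ and semipositive $\met{M}_i$ with underlying bundle $L|_Y$, realizing $\met{O_X(b)}|_Y$ by $\OO(p[\YY_v])$ as a $\BB$-model of $O_Y^{q}$ (Yuan's Lemma together with the Simultaneous Model Lemma) reduces the term $\achern{\met{M}_1}\cdots\achern{\met{M}_d}\achern{\met{O_X(b)}|_Y}$ to a classical intersection against $[\YY_v]$ that Lemma~\ref{Lemma: Conservation of Number} evaluates to $b[k(v):k]\deg_L(Y)$. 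Summing the $d+1$ identical contributions and dividing by $(d+1)\deg_L(Y)$ yields $b[k(v):k]$; real $b$ follows by approximation via Theorem~\ref{Theorem: Intersection properties}(ii).

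For part (iii), assume first that $\met{L}$ is induced by a $\BB$-model $(\XX,\LL)$ of $(X,L^{e})$, and let $\overline{x}\subset\XX$ be the Zariski closure of $x$, a projective curve over $k$ finite over $\BB$. By definition $\achern{\met{L}|_{\{x\}}}=\chern{\LL}\cdot[\overline{x}]/e=\deg_k(\LL|_{\overline{x}})/e$. The rational section $s^{\otimes e}$ of $L^{e}$ extends uniquely to a rational section $\widetilde{s}$ of $\LL$; grouping closed points of $\overline{x}$ by their image in $\BB$ gives
\[
\deg_k(\LL|_{\overline{x}}) = \sum_{v}\sum_{p\in\overline{x}_v}[k(p):k]\,\ord_p(\widetilde{s}|_{\overline{x}}).
\]
The inner sum is computed exactly as in the proof of Lemma~\ref{Lemma: Intersection vs evaluation of f}, with $\widetilde{s}$ in place of the section used there: writing $\widetilde{s}$ locally as $(\gamma_1/\gamma_2)u$ in a formal chart of $\XX$ over $v$, passing to the $\mm_v$-adic completion, and applying Fulton's length formula produces $-[k(v):k]\sum_{y\in\orb(x)}\deg_v(y)\log\|\widetilde{s}(y)\|_{\LL,v}$. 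Since $x\not\in\supp\Div(s)$, each $\|\widetilde{s}(y)\|_{\LL,v}=\|s(y)\|_{\met{L},v}^{e}$ is finite and positive. Substituting, dividing by $e$, and then by $\deg(x)$ yields the claimed formula. For general semipositive $\met{L}$, one approximates by model metrics and invokes Theorem~\ref{Theorem: Intersection properties}(ii): only finitely many places contribute to either side, and on those the metrics converge uniformly, so both sides pass to the limit. The main technical obstacle is the adaptation of Lemma~\ref{Lemma: Intersection vs evaluation of f} to a rational section rather than a globally regular one restricting to $1$ on the generic fiber, for which the hypothesis $x\not\in\supp\Div(s)$ is essential in guaranteeing that $\widetilde{s}$ is regular and nonvanishing at every point of $\orb(x)$.
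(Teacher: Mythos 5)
Your proposal is correct and follows essentially the same route as the paper: part (i) is the paper's telescoping argument verbatim, part (ii) differs only cosmetically (you telescope so that each of the $r+1$ terms contains one factor of $\achern{\met{O_X(b)}}$, whereas the paper expands binomially and kills the higher terms via $\chern{\MM}^2=0$ — the two computations are equivalent, and your model $\OO(e_np[\XX_{n,v}])$ is the paper's $\OO_{\XX}(m[\XX_v])$), and part (iii) correctly supplies the adaptation of Lemma~\ref{Lemma: Intersection vs evaluation of f} that the paper explicitly omits.
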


\begin{proof}
	To prove (i), we use the telescoping sum trick from \eqref{Equation: Step 2 Bound} in the 
	previous section, and Theorem~\ref{Theorem: Intersection properties}(ii). Set $r = \dim Y$. Then 
		\benn
			\ba
				\left| \achern{\met{L}|_{Y}}^{r+1} - \achern{\met{L}'|_{Y}}^{r+1} \right| 
					&\leq \sum_{j=0}^r \left|\achern{\met{L}|_{Y}}^j 
						\achern{\left(\met{L} \otimes (\met{L}')^{\vee}\right)\big|_{Y}} 
						\achern{\met{L}'|_{Y}}^{r-j} \right| \\
					&\leq \sum_{j=0}^r \deg_{L}(Y) 
						\sum_v [k(v):k] \dist_v\left(\metric_{\met{L},v}, \metric_{\met{L}',v}\right) \\
					&= C (r+1) \deg_L(Y),
			\ea
		\eenn
	where $C$ is the constant in \eqref{Eqn: Height bound}. Dividing both sides by $(r+1)\deg_L(Y)$
	and using the definition of height gives the result.
			
	For (ii), we note that arithmetic intersection numbers are continuous with respect to
	change of metric (Theorem~\ref{Theorem: Intersection properties}(ii)). Therefore it suffices to
	assume that $\met{L}$ is induced by a relatively semipositive 
	$\BB$-model $(\XX, \LL)$ of $(X, L^e)$. Let us also assume that $b=\frac{m}{n} \in \QQ$. 
	As in the proof of Theorem~\ref{Theorem: Measure Properties}, we can construct a 
	$\BB$-model of $O_X$ that induces the metrized line bundle $\met{O_X(b)}$ by taking the line 
	bundle $\MM = \OO_{\XX}(m[\XX_v])$ associated to the Cartier divisor $m[\XX_v]$ on $\XX$. It is a
	$\BB$-model of the trivial bundle $O_X$, and we may view it as a $\BB$-model of $O_X^n$ via
	the isomorphism $O_X \simarrow O_X^n$ carrying $1$ to $1^{\otimes n}$. If $\pi$ is a local 
	equation for $\XX_v$ on $\XX$, then we see that the metric on $O_X$ at $v$ induced by $\MM$ is given by 
	$\|1(x)\|_{\MM, v} = |\pi^{m}|^{1/n}_v = e^{-m/n}$. The metrics at all of the other places are 
	evidently trivial. 
	
	The line bundle $\MM$ is relatively semipositive on $\XX$. Indeed, take any curve $C$ supported
	in the fiber over a point $w \in \BB$. If $w \not= v$, then $C$ and $[\XX_v]$ are disjoint and 
	$[\XX_v] \cdot C = 0$. If $w=v$, then we note that $[\XX_v] = \pi^*[v]$, where $\pi: \XX \to \BB$
	is the structure morphism. Let $D$ be a divisor on $\BB$ linearly equivalent to $[v]$ such that 
	$v \not\in \supp(D)$. Then $\pi^*D$ is a divisor with support disjoint from $C$, and so 
	$[\XX_v]\cdot C = \pi^*D \cdot C = 0$.
	
	Notice that, as an operator on codimension-two cycles, $\chern{\MM}^2 = 0$ by a linear equivalence 
	argument similar to the one at the end of the last paragraph. Hence, for any subvariety $Y \subset X$ of 
	dimension $r$, we have 
		\benn
			\ba
				\achern{\left(\met{L} \otimes \met{O_X(b)}\right)\big|_{Y}}^{r+1} 
				&= \sum_{j=0}^{r+1} \binom{r+1}{j} \achern{\met{L}|_{Y}}^j 
					\achern{\met{O_X(b)}|_Y }^{r+1-j} \\
				&= \sum_{j=0}^{r+1} \binom{r+1}{j} \frac{\chern{\LL}^j \chern{\MM}^{r+1-j} 
					\cdot [\alg{Y}]}{e^j n^{r+1-j}} \\
				&= \frac{\chern{\LL}^{r+1}\cdot [\overline{Y}]}{e^{r+1}} + (r+1)\frac{\chern{\LL}^r
					\chern{\MM}\cdot[\overline{Y}]}{e^r n} \\
				&= \achern{\met{L}|_{Y}}^{r+1} + \frac{m}{n} (r+1) 
					\frac{ \chern{\LL}^r \cdot [\alg{Y}] \cdot [\XX_v]}{e^r} \\
				&=  \achern{\met{L}|_{Y}}^{r+1} + \frac{m}{n} (r+1) 
				\frac{ \chern{\LL|_{\overline{Y}}}^r \cdot [\left(\alg{Y}\right)_v]}{e^r } \\
				&= \achern{\met{L}|_{Y}}^{r+1} + \frac{m}{n}(r+1)[k(v):k]\deg_L(Y).
			\ea
		\eenn
	The last equality follows from Lemma~\ref{Lemma: Conservation of Number}. Applying the 
	definition of height immediately gives the result in the case $b = \frac{m}{n}$. The general case
	follows by continuity of arithmetic intersection numbers when we take a limit over rational 
	approximations of $b$.
	
	The proof of (iii) is similar to the proof of Lemma~\ref{Lemma: Intersection vs evaluation of f},
		so we omit it. 
\end{proof}

\section{Algebraic Dynamical Systems} \label{Section: Dynamics}

	In this section we review the facts necessary to work with algebraic dynamical systems defined
	over a function field, including the construction of the invariant metrics on the polarization
	of a dynamical system, the theory of (canonical) dynamical heights, and the invariant 
	measures for the dynamical system.

\subsection{Invariant Metrics} \label{Section: Invariant metrics}

	Here we are concerned with the existence and uniqueness properties of invariant metrics on the 
	polarization of an algebraic dynamical system. This will give us a natural
	semipositive metrized line bundle with which to define heights related to a dynamical system.
	
	Let $(X,\varphi,L)$ be an algebraic dynamical system over $K$ as in the introduction. Suppose 
	$\theta: \varphi^*L \simarrow L^q$ is an isomorphism with $q > 1$. For a place $v$ of $K$, 
	choose any initial metric $\metric_{1,v}$ on $L_v$. For example, it could be the metric induced
	by a $\BB$-model of $L$. We can construct an \textit{invariant metric} on 
	$L_v$ by Tate's limit process: by induction, define 
		\[
			\metric_{n+1,v} = \left(\varphi^*\metric_{n,v} \circ \theta^{-1}\right)^{1/q}.
		\] 
	Here $\varphi^*\metric_{n,v}$ denotes the metric on $\varphi^*L_v$ induced by pullback.
	It is well-known (cf. \cite[\S9.5]{Bombieri-Gubler_2006} or 
	\cite[\S2]{Zhang_Small_Points_1995}) that this sequence of metrics converges 
	uniformly to a continuous metric $\metric_{0,v}$ on $L_v$ with the following properties:
		\begin{enumerate}
			\item  The pullback by $\varphi$ agrees with the $q$th tensor power (up to the 
				isomorphism $\theta$):
				\[
					\metric_{0,v}^{\otimes q} \circ \theta = \varphi^*\metric_{0,v}.
				\]
			
			\item If $\theta$ is replaced by $\theta' = a \theta$ for some $a \in K^{\times}$, then 
				the corresponding metric constructed by Tate's limit process satisfies
				\[
					\|\cdot\|_{0,v}' = |a|_v^{\frac{1}{q-1}}\metric_{0,v}.
				\]
		\end{enumerate}
	Property (i) uniquely determines the metric. In the literature this metric is sometimes 
	called the ``canonical metric'' or an ``admissible metric.'' We adhere to the term 
	\textit{invariant metric} because it is only canonical up to a choice of 
	isomorphism $\theta$ by property (ii), and we feel the term ``admissible'' is already 
	overused in nonarchimedean geometry. We can interpret property (i) by saying that
	the family $\{\metric_{0,v}\}_v$ of invariant metrics provide the unique adelic metric 
	structure on $L$ such that the isomorphism $\theta: \varphi^*L \simarrow L^q$ becomes an isometry. 
	
	The above discussion settles the existence of invariant metrics at each place $v \in \BB$, 
	but we still need to show that they fit together to give a semipositive adelic metrized line bundle:
		\begin{itemize}
			\item[(iii)] There exists a sequence of $\BB$-models $(\XX_n, \LL_n)$ of $(X, L^{e_n})$ 
				such that each $\LL_n$ is nef, $\metric_{\LL_n,v}^{1/e_n} = \metric_{0,v}$ for 
				almost all $v$,  and $\metric_{\LL_n, v}^{1/e_n} \to \metric_{0,v}$ 
				uniformly for every other place $v$. In particular, the metrized line bundle 
				$\met{L}$ with underlying bundle $L$ and the family of metrics $\{\metric_{0,v}\}_v$ 
				is semipositive.
		\end{itemize}
	The first step in this direction is to construct a sequence of $\BB$-models that determine 
	metrics on $L$ according to Tate's limit process. 
	
	\begin{lem} \label{Lem: Nef Model}
		Let $X$ be a projective variety over $K$ and $L$ an ample line bundle on $X$. Then there exists
		a positive integer $e$ and a $\BB$-model $(\XX, \LL)$ of $(X, L^e)$ such that $\LL$ is nef.
	\end{lem}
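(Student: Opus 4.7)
The plan is to exhibit $(\XX,\LL)$ explicitly via a projective embedding, and then deduce nefness from the fact that the restriction of a globally generated line bundle to any subscheme is globally generated and hence nef.

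First, since $L$ is ample on the projective $K$-variety $X$, there exists a positive integer $e$ such that $L^e$ is very ample. Choose a closed immersion $i \colon X \hookrightarrow \PP^N_K$ with $i^*\OO_{\PP^N_K}(1) \cong L^e$. Composing with the inclusion of the generic fiber $\PP^N_K \hookrightarrow \PP^N_{\BB} = \PP^N_k \times_k \BB$ identifies $X$ with a closed subscheme of the generic fiber of the relative projective space $\PP^N_{\BB} \to \BB$. Let $\XX$ denote the scheme-theoretic (equivalently, reduced) Zariski closure of $X$ in $\PP^N_{\BB}$, equipped with the structure morphism $\pi \colon \XX \to \BB$ inherited from the second projection. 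Set $\LL = \OO_{\PP^N_{\BB}}(1)|_{\XX}$.

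Next I verify that $(\XX,\LL)$ is a $\BB$-model of $(X,L^e)$. The scheme $\XX$ is integral by construction and its structure morphism $\pi$ is projective, being the composition of a closed immersion with the projective morphism $\PP^N_{\BB} \to \BB$. Since $\XX$ is integral and $\pi$ is dominant (as the generic fiber contains $X$, which is nonempty), $\pi$ is flat: the local rings of $\BB$ are DVRs (or fields), and any torsion-free module over a DVR is flat, so $\OO_{\XX}$ is flat over $\OO_{\BB}$. The generic fiber $\XX_K$ coincides with $X$ under $i$, giving the preferred isomorphism. On this generic fiber, $\LL|_X = \OO_{\PP^N_{\BB}}(1)|_X = i^*\OO_{\PP^N_K}(1) \cong L^e$, so $\LL$ is indeed a model of $L^e$.

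It remains to check nefness of $\LL$. The line bundle $\OO_{\PP^N_{\BB}}(1)$ is globally generated over $\BB$ by the homogeneous coordinates $x_0,\dots,x_N$; its restriction $\LL$ to $\XX$ is therefore globally generated as well. For any integral curve $C$ contained in a closed fiber $\XX_v$, choose a section $s$ of $\LL$ on $\XX$ whose vanishing locus does not contain $C$ (possible by global generation applied at a closed point of $C$). Then $\Div(s|_C)$ is an effective divisor on $C$, so $\deg_{\LL}(C) = \deg(\Div(s|_C)) \geq 0$. This is the definition of relative semipositivity (i.e. nefness on vertical curves) used in the excerpt, completing the proof.

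The main point requiring care, and the only step that is not purely formal, is the flatness of $\pi \colon \XX \to \BB$; everything else (existence of a very ample power, global generation of $\OO(1)$, and nefness of globally generated bundles) is standard.
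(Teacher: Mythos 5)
Your construction is sound, but the property you verify at the end is weaker than what the lemma asserts. The lemma requires $\LL$ to be nef on $\XX$, i.e.\ to have nonnegative degree on \emph{every} integral curve of the $k$-variety $\XX$, horizontal curves included; you only test curves contained in closed fibers, which is the paper's notion of ``relatively semipositive'' (relatively nef). The distinction is not cosmetic here: this lemma feeds into condition (S1), into Theorem~\ref{Theorem: Height properties}(ii), and into the proof of Theorem~\ref{Thm: Stronger equidistribution}, where Kleiman's theorem is applied to the \emph{horizontal} cycles $[\overline{Y}]$, so absolute nefness on $\XX$ is what is actually needed. Fortunately your own argument closes the gap with no extra work: $\LL=\OO_{\PP^N_{\BB}}(1)|_{\XX}$ is globally generated, and a globally generated line bundle on a projective variety is nef, because for \emph{any} integral curve $C\subset\XX$ (vertical or horizontal) one can pick a global section not vanishing identically on $C$ and conclude $\deg_{\LL}(C)\ge 0$. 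You should simply delete the restriction to fibral curves.

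With that repair, your route differs from the paper's in an instructive way. The paper takes $\LL=\OO_{\XX}(1)\otimes\pi^*\OO_{\BB}(D)$ for a suitable ample divisor $D$ on $\BB$: it treats the chosen basepoint-free sections $s_i$ of $L^e$ only as \emph{rational} sections of $\OO_{\XX}(1)$, whose divisors may acquire vertical components with negative multiplicities on the model, and twists by $\pi^*D$ to make $[\pi^*D]+[\Div(\lift{s_i})]$ effective before intersecting with horizontal curves. Your observation that $\OO_{\PP^N_{\BB}}(1)=\pr_1^*\OO_{\PP^N}(1)$ is already globally generated on $\PP^N_{\BB}$, hence so is its restriction to $\XX$, shortcuts this and shows the twist by $\pi^*\OO_{\BB}(D)$ is not needed for this particular model. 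Both arguments produce a valid nef model; yours is the more economical one, provided the horizontal curves are actually addressed.
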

	
	\begin{proof}
		This proof was adapted from a remark in the Notation and Conventions section of
	\cite[\S2.1]{Xinyi_Arithmetic_Bigness_2006}. Choose $e$ so that $L^e$ is very ample. Let 
			\[
				X \hookrightarrow \PP^N_K \hookrightarrow \PP^N_{\BB} =\PP^N \times \BB
			\] 
	be an embedding induced by $L^{e}$ followed by identifying $\PP^N_K$ with the 
	generic fiber of $\PP^N_{\BB}$, and set $\XX$ to be the Zariski closure of $X$ in 
	$\PP^N_{\BB}$ with the reduced structure. Let $\pi: \XX \to \BB$ be the restriction of the 
	second projection. Choose a collection of basepoint free global sections $s_0, \ldots, s_N$ of $L^e$, and
	let $\lift{s_i}$ be the section $s_i$ viewed as a rational section of $\OO_{\XX}(1)$. Let $D$ be an 
	ample Cartier divisor $\BB$ such that $[\pi^*D] + [\Div(\lift{s_i})]$ is effective for all $i$. Finally, 
	define $\LL = \OO_{\XX}(1) \otimes \pi^*\OO_{\BB}(D)$.
	
	We claim that $\LL$ is nef. Indeed, suppose $\YY$ is an irreducible curve on $\XX$. If $\YY$ is vertical --- 
	i.e., $\pi(\YY) = \{v\}$ for some closed point $v \in \BB$ --- then 
		\[
			\chern{\LL} \cdot [\YY] = \deg (\OO_{\XX}(1) |_{\YY}) > 0,
		\]
	since $\OO_{\XX}(1)$ is relatively ample. If $\YY$ is horizontal, then choose one of the sections $s_i$ of 
	$L^e$ such that $\YY_K \not\in \supp [\Div(s_i)]$. Then $\YY$ is not contained in 
	the support of $[\Div(\lift{s_i})]$, and it intersects properly with any subvariety of a vertical fiber. Hence,
		\benn
				\chern{\LL} \cdot [\YY] = [\Div(\lift{s_i})]_h \cdot [\YY]
					+ \left([\Div(\lift{s_i})]_f + \chern{\pi^*D}\right) \cdot [\YY] \geq 0,
		\eenn
	where $[\Div(\lift{s_i})] = [\Div(\lift{s_i})]_h + [\Div(\lift{s_i})]_f$ is the decomposition of this cycle into 
	its horizontal and vertical parts. 
	\end{proof}

	Returning to our construction, choose an initial $\BB$-model $(\XX_1, \LL_1)$ of $(X, L^{e_1})$ such 
	that $\LL_1$ is nef on $\XX_1$. The above lemma guarantees the existence of such a $\BB$-model. Define the 
	metric on $L_v$ to be $\metric_{1,v} =  \metric_{\LL_1,v}^{1/e_1}$. 
	
	We proceed induction. Suppose $(\XX_n, \LL_n)$ is a $\BB$-model of $(X,L^{e_n})$, 
	where $e_n = q^{n-1}e_1$ and $\LL_n$ is nef. Let $j_n:X \hookrightarrow \XX_n$ be the inclusion of the 
	generic fiber (always implicitly precomposed with the preferred isomorphism 
	$\iota_n: X \simarrow (\XX_n)_K$). Let $\Gamma_\varphi: X \to X \times_K X$ be the graph morphism. 
	Consider the commutative diagram
		\benn \label{Diagram: Graph Square}
			\xymatrix{
				X \ar@/^1pc/ [drr]^{j_n \circ \varphi} \ar[dr]^{\widetilde{\Gamma}_\varphi
			 	} \ar@/_1pc/[ddr]_{j_n} & & \\
				&  \XX_n \times_{\BB} \XX_n \ar[r]^{\ \ \pr_2} \ar[d]^{\pr_1} & \XX_n \ar[d] \\
				& \XX_n \ar[r] & {\BB} \\
			}
		\eenn
	where $\widetilde{\Gamma}_\varphi = (j_n \times j_n) \circ\Gamma_\varphi$. 
	Define $\XX_{n+1}$ to be 
	the Zariski closure of $\widetilde{\Gamma}_\varphi(X)$ in $\XX_n \times_{\BB} \XX_n$ with the 
	reduced subscheme structure. Take $\LL_{n+1} = \pr^*_2 \LL_n|_{\XX_{n+1}}$. The graph 
	morphism $\Gamma_\varphi$  and the tensor power 
	$\theta^{\otimes e_n}$ give the preferred isomorphisms 
	between $X$ and the generic fiber of $\XX_{n+1}$ and between $\varphi^*L^{e_n}$ and $L^{qe_n}$, 
	respectively. As always, we will make these identifications without comment in what follows.  
	Set $e_{n+1}=qe_n$, and define a metric on $L_v$ by $\metric_{n+1,v} = 
	\metric_{\LL_{n+1},v}^{1/e_{n+1}}$.
	Observe that $\LL_{n+1}$ is nef since it is the pullback of a nef line bundle. 

	The metrics on $L_v$ are, by construction, exactly as given by 
	Tate's limit process. This follows from the fact that formation of formal metrics 
	commutes with formal pullback (Lemma~\ref{Lemma: Pullback of metrics}), and a small computation:
		\benn
			\ba
				\metric_{n+1,v} = \metric_{\LL_{n+1},v}^{1/{e_{n+1}}} &= 
				\left(\metric_{\pr_2^*\LL_{n,v}|_{\XX_{n+1}}}^{1/e_n}\right)^{1/q} \\
				&= \left(\varphi^*\metric_{\LL_n,v}^{1/e_n} \circ \theta^{-1}\right)^{1/q} \\
				&= \left(\varphi^*\metric_{n,v} \circ \theta^{-1}\right)^{1/q}.
			\ea
		\eenn
		
	Moreover, we now show that almost all of the metrics constructed are stable under this
	pullback procedure. As $X$ is of finite type over $K$, there exists an open subset 
	$\UU \subset \BB$ such that the endomorphism $\varphi$ extends to a $\UU$-morphism  
	$\varphi_\UU:(\XX_1)_\UU \to (\XX_1)_\UU$, 
	and the isomorphism $\theta: \varphi^*L \simarrow L^q$ extends to an isomorphism 
	$\theta_\UU: \varphi_\UU^*\LL_1|_{\pi^{-1}(\UU)} \simarrow \LL_1^q|_{\pi^{-1}(\UU)}$. 
	The graph morphism $\widetilde{\Gamma}_\varphi$ extends over $\UU$ to give a closed immersion
	$(\XX_1)_\UU \to (\XX_1)_\UU \times_\UU (\XX_1)_\UU$. Consequently, its (scheme-theoretic) 
	image is exactly $(\XX_{2})_\UU$, so that $\XX_1$ and $\XX_2$ are isomorphic when restricted
	over $\UU$. Pulling back $\LL_2$ via this isomorphism and applying $\theta_\UU$ shows 
	$\Gamma_{\varphi_\UU}^*\LL_2 = \varphi_\UU^*\LL_1 \cong \LL_1^q$ over $\UU$. 
	As $\LL_2$ is a model of 
	$L^{e_2}$ via the graph morphism and the isomorphism $\theta$, we conclude that for each 
	place $v$ corresponding to a closed point of $\UU$, we have
		\[
			\metric_{2,v} = \metric_{\LL_2}^{1/e_2} = \metric_{\LL_1^q}^{1/qe_1}
			= \metric_{\LL_1}^{1/e_1} = \metric_{1,v}.
		\]
	
	The isomorphism between $(\XX_1)_\UU$ and $(\XX_2)_\UU$ allows us to extend the work in 
	the previous paragraph by induction to conclude that for each place $v$ of $\UU$, the metrics 
	$\metric_{n,v}$ on $L_v$ are equal for all $n$.


\subsection{Dynamical Heights} \label{Section: Dynamical Heights}
	
	Let the data $(X, \varphi, L)$, $\theta: \varphi^*L \simarrow L^q$, and $\met{L}$ be as in the 
	previous section.
	For a subvariety $Y \subset X$, we can define its dynamical height with respect to the dynamical
	system $(X, \varphi, L)$ by the formula
		\benn
			h_{\varphi}(Y) = h_{\met{L}}(Y) = 
			\frac{\achern{\met{L}|_Y}^{\dim Y + 1}}{(\dim Y + 1) \deg_L(Y)}.
		\eenn	

\begin{thm} \label{Theorem: Height properties}
	Let $(X, \varphi, L)$ be a dynamical system over $K$, $\theta: \varphi^*L \simarrow L^q$ an
	isomorphism, and $\met{L}$ the line bundle $L$ equipped with the corresponding invariant
	metrics $\{\metric_{0,v}\}_v$.
		\begin{enumerate}
			
			\item The height $h_\varphi$ is independent of the choice of isomorphism $\theta$.

			\item  For any subvariety $Y \subset X$, $h_{\varphi}(Y) \geq 0$.
			
			\item  For any subvariety $Y \subset X$, $h_{\varphi}(\varphi(Y)) = q h_{\varphi}(Y)$.

			\item If $Y$ is preperiodic for the map $\varphi$ (i.e., the forward orbit
			$\{\varphi^n(Y): n=1, 2, \ldots\}$ is finite), then $h_{\varphi}(Y) = 0$.		
		\end{enumerate}
\end{thm}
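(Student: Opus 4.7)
The plan is to prove (i)--(iv) in order, using the invariance properties of $\met{L}$ established in Section~\ref{Section: Invariant metrics} together with the height calculus of Section~\ref{Section: Heights}. For (i), I would invoke property~(ii) of the invariant metric: replacing $\theta$ by $a\theta$ for $a\in K^{\times}$ scales $\metric_{0,v}$ by $|a|_v^{1/(q-1)}$ at every place $v$. Consequently the resulting $\met{L}'$ differs from $\met{L}$ by a tensor product $\bigotimes_v \met{O_X(b_v)}$ with $b_v=\ord_v(a)/(q-1)$, only finitely many of which are nonzero. Iterating Theorem~\ref{Thm: General Heights}(ii) one place at a time yields
\[
h_{\met{L}'}(Y) \;=\; h_{\met{L}}(Y) \;+\; \frac{1}{q-1}\sum_v [k(v):k]\,\ord_v(a),
\]
and the product formula for $K$ collapses the remaining sum to zero.

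For (ii), I would approximate by models. Property~(iii) of the invariant metric supplies a sequence of $\BB$-models $(\XX_n,\LL_n)$ of $(X,L^{e_n})$ with $\LL_n$ nef and induced metrics converging uniformly to $\metric_{0,v}$ at every place. Let $\overline{Y}_n\subset \XX_n$ denote the Zariski closure of $Y$; it is a $\BB$-model of $Y$, and $\LL_n|_{\overline{Y}_n}$ is nef because nefness is preserved by pullback along the closed immersion. Hence $\chern{\LL_n|_{\overline{Y}_n}}^{\dim Y+1}\ge 0$ for every $n$, and dividing by $e_n^{\dim Y+1}$ and invoking parts~(i)--(ii) of Theorem~\ref{Theorem: Intersection properties} in the limit gives $\achern{\met{L}|_Y}^{\dim Y+1}\ge 0$; since $L$ is ample we have $\deg_L(Y)>0$, so $h_\varphi(Y)\ge 0$.

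For (iii), property~(i) of the invariant metric tells us that $\theta$ becomes an isometry, so $\varphi^*\met{L}=\met{L}^q$ as adelic metrized bundles on $X$; restricting along $Y\hookrightarrow X$ then identifies $(\varphi|_Y)^*(\met{L}|_{\varphi(Y)})$ with $\met{L}^q|_Y$. Because $\varphi^*L=L^q$ is ample, $\varphi$ cannot contract any curve and is therefore finite, so $\varphi|_Y\colon Y\to\varphi(Y)$ is a generically finite surjection with $\dim\varphi(Y)=\dim Y$. Theorem~\ref{Theorem: Intersection properties}(iv) applied to $\varphi|_Y$, combined with multilinearity, then yields
\[
\deg(\varphi|_Y)\,\achern{\met{L}|_{\varphi(Y)}}^{\dim Y+1} \;=\; \achern{\met{L}^q|_Y}^{\dim Y+1} \;=\; q^{\dim Y+1}\,\achern{\met{L}|_Y}^{\dim Y+1},
\]
while the classical projection formula on generic fibers furnishes the analogous identity $q^{\dim Y}\deg_L(Y)=\deg(\varphi|_Y)\deg_L(\varphi(Y))$ for classical degrees. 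Dividing one equality by the other produces $h_\varphi(\varphi(Y))=q\,h_\varphi(Y)$. Part~(iv) is then immediate: if $\varphi^m(Y)=\varphi^n(Y)$ with $m>n\ge 0$, iterating (iii) gives $(q^m-q^n)h_\varphi(Y)=0$, forcing $h_\varphi(Y)=0$ since $q\ge 2$. The main obstacle I anticipate lies in (iii), namely checking carefully that the place-by-place isometries produced by Tate's limit really assemble into a single equality $\varphi^*\met{L}=\met{L}^q$ of adelic metrized bundles, so that Theorem~\ref{Theorem: Intersection properties}(iv) applies verbatim; once this is in hand, (i), (ii), and (iv) are formal consequences of the machinery of Section~\ref{Arithmetic intersection theory over function fields}.
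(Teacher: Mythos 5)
Your proposal is correct, and for parts (ii)--(iv) it follows essentially the same route as the paper: (ii) reduces by metric continuity to the nonnegativity of top self-intersections of nef model bundles (Kleiman), (iii) combines the isometry $\varphi^*\met{L}\cong\met{L}^q$ with finiteness of $\varphi|_Y$ and Theorem~\ref{Theorem: Intersection properties}(iv), and (iv) iterates (iii). The only genuine divergence is in (i). The paper lifts the discrepancy $\met{M}$ between the two invariant metrics to the line bundle $\OO(\Div(a))$ of the principal divisor $\Div(a)$ on a model, expands $\achern{\met{L}'|_Y}^{r+1}-\achern{\met{L}|_Y}^{r+1}$ binomially, and kills every term at once because each contains a factor $\achern{\met{M}}$, which on the model is intersection with a principal divisor. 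You instead decompose $\met{M}$ place by place into constant twists $\met{O_X(b_v)}$ with $b_v=\ord_v(a)/(q-1)$, apply the height-shift formula of Theorem~\ref{Thm: General Heights}(ii) finitely many times, and invoke the product formula to collapse $\sum_v[k(v):k]\ord_v(a)$ to zero. Both arguments rest on the same underlying fact (global triviality of a principal divisor), but yours is a corollary-level application of an already-stated theorem and makes the role of the product formula explicit, while the paper's is a single self-contained intersection computation that never needs to separate places. Either is acceptable; just note that iterating Theorem~\ref{Thm: General Heights}(ii) is legitimate because that theorem also asserts semipositivity of each intermediate twist $\met{L}\otimes\met{O_X(b)}$.
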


	Before turning to the proof, we will need the following

\begin{lem} \label{Lem: Finiteness}
	Let $(X, \varphi, L)$ be a dynamical system defined over $K$, and let $\varphi^*L \cong L^q$ for some
	integer $q > 1$. Then for any subvariety $Y \subset X$, the induced morphism $Y \to \varphi(Y)$ is finite
	of degree $q^{\dim Y}$.
\end{lem}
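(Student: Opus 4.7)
The plan is to first show $\varphi: X \to X$ is itself a finite morphism, then deduce finiteness of $\varphi|_Y$ by restriction, and finally read off the degree using the projection formula from classical intersection theory.

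Finiteness of $\varphi$ follows from an ampleness argument. Since $L$ is ample and $\varphi^*L \cong L^q$ with $q \geq 2$, the pullback $\varphi^*L$ is ample. For any closed point $x \in X$, the scheme-theoretic fiber $F = \varphi^{-1}(x)$ is a closed subscheme of the projective variety $X$, hence projective, and carries $\varphi^*L|_F$ which is simultaneously ample (restriction of ample) and trivial (pulled back from the point $x$). An ample line bundle on a projective scheme of positive dimension has positive top self-intersection, while a trivial bundle has zero top self-intersection; this forces $\dim F = 0$, so $\varphi$ is quasi-finite. Being also proper (as a morphism between projective varieties), $\varphi$ is finite.

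Given that $\varphi$ is finite, its restriction to the closed subscheme $Y$ is finite, so $\varphi|_Y: Y \to X$ is finite. Its image $\varphi(Y)$ is a closed irreducible subvariety of $X$ of dimension equal to $\dim Y$ (finite morphisms preserve dimension), and the induced surjective morphism $f := \varphi|_Y: Y \to \varphi(Y)$ is therefore finite of some well-defined degree.

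To compute this degree, I would apply the projection formula. Writing $r = \dim Y$ and using the identification $f^*(L|_{\varphi(Y)}) = (\varphi^*L)|_Y \cong L^q|_Y$, the projection formula for classical intersection products yields
\[
\deg(f) \cdot \deg_L(\varphi(Y)) = \deg_{L^q|_Y}(Y) = q^r \deg_L(Y),
\]
from which the asserted degree $\deg(f) = q^{\dim Y}$ can be extracted. The main obstacle is really the finiteness step, connecting ampleness of $\varphi^*L$ to properness of $\varphi$; the degree calculation itself is a direct consequence of the projection formula together with the polarization hypothesis $\varphi^*L \cong L^q$.
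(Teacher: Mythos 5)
Your proposal is correct and takes essentially the same route as the paper: the identical ampleness-versus-triviality contradiction yields quasi-finiteness (the paper applies it directly to a subvariety $Z\subset Y$ contracted by $\varphi|_Y$, whereas you apply it to the fibers of $\varphi$ on all of $X$ and then restrict to $Y$ — a cosmetic difference), and the degree is then read off from the same projection-formula identity. One remark: your displayed equation $\deg(f)\cdot\deg_L(\varphi(Y)) = q^r\deg_L(Y)$ makes explicit that extracting $\deg(f)=q^r$ still requires knowing $\deg_L(\varphi(Y))=\deg_L(Y)$, a point the paper's own chain of equalities elides in exactly the same way by writing $\chern{L}^r\cdot\varphi_*([Y])$ as $\deg(\psi)\,\chern{L}^r\cdot[Y]$ rather than $\deg(\psi)\,\chern{L}^r\cdot[\varphi(Y)]$ — so your argument is no less complete than the source, but if you want a airtight statement you should either justify that equality of degrees or note that the subsequent applications (e.g.\ $h_\varphi(\varphi(Y))=q\,h_\varphi(Y)$) only use the relation $\deg(f)\cdot\deg_L(\varphi(Y)) = q^r\deg_L(Y)$, in which the factor $\deg(f)$ cancels.
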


\begin{proof}
	Let $\psi: Y \to \varphi(Y)$ be the morphism induced by $\varphi$. First note that $\psi^*(L|_{\varphi(Y)})$ is
	ample on $Y$ since the restriction of an ample bundle to a subvariety is still ample, and
		\[
			\psi^*\left(L|_{\varphi(Y)}\right) = \left(\varphi^*L\right)|_Y \cong L^q|_Y = \left(L|_Y\right)^q.
		\]
	If $\psi(Z) = \{p\}$ for some subvariety $Z \subset Y$ and some point $p$, then 
	$\psi^*\left(L|_{\varphi(Z)}\right) \cong O_Z$, which can only be ample if $Z$ is reduced to a point. Hence
	$\psi: Y \to \varphi(Y)$ has finite fibers. As $X$ is projective, we see $\psi$ is a projective quasi-finite morphism, 
	and so it must be finite.
	
	If $r = \dim Y$, the projection formula gives
		\benn
				\chern{L}^r \cdot [Y] = q^{-r} \chern{\varphi^*L}^r \cdot[Y] 
				= q^{-r} \chern{L}^r \cdot \varphi_*([Y])  = \frac{\deg(\psi)}{q^r} \chern{L}^r \cdot [Y].
		\eenn
	As $L$ is ample, we may divide by $\chern{L}^r \cdot [Y]$ to conclude $\deg(\psi) = q^r$.
\end{proof}

\begin{proof}[Proof of Theorem~\ref{Theorem: Height properties}]

	(i) Let $\theta' = a\theta$ for some $a \in K^{\times}$. If $\met{M}$ is the metrized line bundle
	with underlying bundle $O_X$ and metric at the place $v$ given by 
	$\|1(x)\|_v = |a|_v^{\frac{1}{q-1}}$, 
	then the invariant metrized line bundle corresponding to $\theta'$ is 
	$\met{L}' = \met{L} \otimes \met{M}$ (property (ii) in 
	section~\ref{Section: Invariant metrics}). Take any model $\XX$ of $X$ and consider the
	line bundle $\OO(\Div(a))$ associated to the principal divisor $\Div(a)$ on $\XX$. We can view it
	as a model of $O_X^{q-1}$ on the generic fiber via isomorphisms 
	$O_X(\Div(a)) \cong O_X \cong O_X^{q-1}$. It is easy to check that the metric on $O_X$ at $v$
	given by $\OO(\Div(a))$  coincides with that of $\met{M}$. Letting $r = \dim Y$, we find that
		\benn
			\ba
		 		\achern{\met{L}'|_Y}^{r + 1} - \achern{\met{L}|_Y}^{r +1} &=
					\achern{\met{L}|_Y \otimes \met{M}|_Y}^{r+1} - \achern{\met{L}|_Y}^{r +1} \\
					&= \sum_{i=0}^{r} \binom{r + 1}{i} \achern{\met{L}|_Y}^i 
					\achern{\met{M}|_Y}^{r + 1-i}.
			\ea
		\eenn
	All of the terms in this sum involve an intersection with $\achern{\met{M}}$, and
	if we compute this intersection on a model we are forced to intersect with the principal divisor
	$\Div(a)$. Thus each term in the sum vanishes. 

	(ii) Arithmetic intersection numbers are continuous with respect to change of metric, so it 
		suffices to prove $\chern{\LL}^{\dim Y+1}\cdot [\overline{Y}] \geq 0$, whenever $(\XX, \LL)$
		is a $\BB$-model of $(X, L^e)$, $\LL$ is nef and $\overline{Y}$ is the Zariski closure of $Y$ 
		in $\XX$. Kleiman's theorem on intersections with nef divisors implies the desired inequality 
		\cite[Thm.~1.4.9]{Lazarsfeld_Positivity_2004}.

	(iii) Let $r = \dim Y$. By Lemma~\ref{Lem: Finiteness}, the morphism $\varphi$ restricts to a finite 
	morphism $Y \to \varphi(Y)$ of degree $q^r$. Theorem~\ref{Theorem: Intersection properties}(iv) implies
		\[
			h_{\varphi}(\varphi(Y)) = \frac{\achern{\met{L}|_{\varphi(Y)}}^{r+1}}{(r+1)
				\deg_L(\varphi(Y))}
			= \frac{\achern{\varphi^* \met{L}|_Y}^{r+1}}{(r+1)
				\deg_{\varphi^*L}(Y)}
			= q\frac{\achern{\met{L}|_Y}^{r+1}}{(r+1)
				\deg_{L}(Y)}
			 = q h_\varphi (Y).
		\]
		
	(iv) If the set $\{\varphi^n(Y): n=1, 2, \ldots\}$ is finite, then $\varphi^n(Y) = \varphi^m(Y)$ for 
		some $m > n \geq 1$. By the previous part, we have 
		
			\[
				q^n h_{\varphi}(Y) = h_\varphi (\varphi^n(Y)) 
				=h_\varphi (\varphi^m(Y)) = q^m h_\varphi (Y).
			\]
		As $q > 1$, we are forced to conclude that $h_{\varphi}(Y) = 0$.
\end{proof}
	
	As a special case of part (iv) of the previous theorem, we note that
		\benn \label{Eqn: Height vanishes}
			h_{\varphi}(X) = \frac{\achern{\met{L}}^{d+1}}{(d+1) \deg_L(X)} = 0.
		\eenn

\subsection{The Invariant Measure $\muinv$}
	\label{Section: Invariant measure}

	Choose an isomorphism $\theta: \varphi^*L \simarrow L^q$, and let $\met{L}$ be the line bundle 
	$L$ equipped with the invariant metrics constructed as above. Fix a place $v$ of $K$. 
	Define a Borel probability measure on $\an{X}_v$ by the formula
		\[
			\muinv = \mu_{\met{L},v} =\frac{\chern{\met{L}}^d}{[k(v):k] \deg_L(X)}.
		\]
	Here $\mu_{\met{L},v}$ and $\chern{\met{L}}^d$ are the measures constructed in 
	section~\ref{Section: Associated measure}. An argument similar to the one that proved
	Theorem~\ref{Theorem: Height properties}(i) shows that $\muinv$ is independent of the choice 
	of isomorphism $\theta$. Although it is not logically necessary for what follows, we give some 
	further commentary on these measures.

	Since $\varphi$ is finite of degree $q^d$ (Lemma~\ref{Lem: Finiteness}), we see that the measure $\muinv$ has 
	the following invariance property:
		\[
			\varphi_* \muinv = \muinv.
		\]	
	Indeed, it follows immediately from Theorem~\ref{Theorem: Measure Properties} and the fact that 
	$\varphi^*\met{L}$ is isometric to $\met{L}^q$. 
	
	Given any subvariety $Y \subset X$, we can also define the measure 
	$\mu_{Y, \varphi, v} = \mu_{Y, \met{L},v}$ as in section~\ref{Section: Associated measure}. 
	Lemma~\ref{Theorem: Measure Properties}(vi) can be used to show
		\[
			\varphi_*\mu_{Y, \varphi, v} = \mu_{\varphi(Y), \varphi, v}.
		\]
	
	An important example is the case when $X$ is a smooth geometrically connected projective 
	variety over $K$ and $v$ is a place of \textit{good reduction} for $(X, \varphi, L)$; i.e., there
	exists an open subvariety $\UU \subset \BB$ containing the point $v$, 
	a smooth $\UU$-model $(\XX, \LL)$ of $(X, L)$, a $\UU$-morphism 
	$\varphi_{\UU}: \XX \to \XX$ whose restriction to the generic fiber is precisely $\varphi$, and 
	an isomorphism $\varphi_{\UU}^*\LL \simarrow \LL^q$. Roughly, the dynamical system can be 
	reduced$\pmod v$. One can see from Theorem~\ref{Theorem: Measure Properties}(i) and our 
	description of Tate's limit process that there exists a point $\zeta \in \an{X}_v$ 
	such that $\muinv = \delta_\zeta$. The point $\zeta$ is the unique point mapping to the generic
	point of the special fiber $\XX_v$ under the reduction map $\an{X}_v \to \XX_v$. 
	Moreover, the forward invariance of the measure $\muinv$ implies that $\zeta$ is a fixed point 
	of the analytification of $\varphi$: $\an{\varphi}_v(\zeta) = \zeta$. 
		
	As a final remark, we mention a backward invariance property the measure $\muinv$ presumably
	possesses based on the work of Chambert-Loir \cite[\S2.8]{Chambert-Loir_Measures_2005} and
	others, although we do not provide any proof in the present article. There is a way to define a 
	trace map $\varphi_*$ on the space of continuous functions on $\an{X}_v$, and by duality a 
	pullback measure $\varphi^*\muinv$. It should then be true that $\varphi^*\muinv = q^d \muinv$. 
	As a consequence of this backward invariance property, if $(X, \varphi, L)$ has 
	good reduction at a place $v$, then $(\an{\varphi}_v)^{-1}(\zeta) = \zeta$, where 
	$\muinv = \delta_\zeta$ as in the previous paragraph. That is, $\zeta$ is a totally invariant point for 
	the morphism $\an{\varphi}_v$.  By analogy with the case of complex dynamical systems, we expect 
	that the invariant measure $\muinv$ can be completely characterized as the unique Borel probability
	 measure on $\an{X}_v$ such that
		\begin{itemize}
			\item $\varphi^*\muinv = q^d \muinv$, and
			\item $\muinv$ does not charge any proper subvariety of $X$: $\muinv(\an{Y}_v) = 0$
				for any proper subvariety $Y \subset X$.
		\end{itemize}
	See the articles of Chambert-Loir \cite{Chambert-Loir_Measures_2005} and Chambert-Loir / Thuillier \cite{Chambert-Loir_Thuillier_Mahler_Integrals_2008} for proofs that the above properties hold for the measure $\muinv$. It is not yet known if they determine the measure. See the article of Briend and Duval \cite{Briend_Duval_2001} for a discussion of such a characterization in the setting of complex dynamics.

\section{Proof of the Equidistribution Theorem} 
\label{Section: Proof of equidistribution}

	Our goal for this section is to prove Theorem~\ref{Theorem: Generic Equidistribution}. We will 
	deduce it from a stronger result that is more flexible for applications and also gives equidistribution
	of small subvarieties. As always, we let
	$X$ be a variety over the function field $K$. Let $\met{L}$ be a semipositive metrized line bundle
	on $X$ with ample underlying bundle $L$ satisfying the following two conditions:
		\begin{itemize}
			\item[(S1)] There exists a sequence of $\BB$-models $(\XX_n, \LL_n)$ of $(X, L^{e_n})$ 
				such that each $\LL_n$ is nef, $\metric_{\LL_n,v}^{1/e_n} = \metric_{0,v}$ for 
				almost all $v$,  and $\metric_{\LL_n, v}^{1/e_n} \to \metric_{0,v}$ 
				uniformly for every other place $v$.
			\item[(S2)] The height of $X$ is zero: $h_{\met{L}}(X) = 0$.
		\end{itemize}
		
	A \textit{net of subvarieties of $X$} consists of an infinite directed set $A$ and a 
	subvariety $Y_\alpha \subset X$ for each $\alpha \in A$. A net of subvarieties $(Y_\alpha)_{\alpha \in A}$ is
	called \textit{generic} if for any proper closed subset $V \subset X$, there exists $\alpha_0 \in A$ so that 
	$Y_\alpha \not\subset V$ whenever $\alpha \geq \alpha_0$. Equivalently, there does not exist a cofinal subset
	$A' \subset A$ such that $Y_\alpha \subset V$ for all $\alpha \in A'$. The net is called 
	\textit{small} if $\lim_{\alpha \in A} h_{\met{L}}(Y_\alpha) = 0$.
	
\begin{thm} \label{Thm: Stronger equidistribution}
	Let $X$ be a projective variety over the function field $K$ equipped with a semipositive metrized
	line bundle $\met{L}$ with ample underlying bundle $L$ satisfying conditions (S1) and (S2). 
	Let $(Y_\alpha)_{\alpha \in A}$ be a generic small net of subvarieties of $X$. 
	Then for any place $v$ of $K$, and for any continuous function $f: \an{X_v} \to \RR$,
    	we have
      		\benn
         		\lim_{\alpha \in A} \int_{\an{X}_v} f d\mu_{Y_\alpha, \met{L}, v} =
         		\int_{\an{X_v}} f d\mu_{\met{L}, v}.
      		\eenn
	That is, the net of measures $\left(\mu_{Y_\alpha, \met{L}, v} \right)_{\alpha \in A}$ converges 
	weakly to $\mu_{\met{L}, v}$. 
\end{thm}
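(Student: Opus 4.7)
The plan is to adapt Yuan's variational technique \cite{Xinyi_Arithmetic_Bigness_2006} to the function-field setting, using the classical Siu inequality on $\BB$-models in place of Yuan's arithmetic bigness theorem. By Lemma~\ref{Lemma: Density of Model Functions}, it suffices to verify weak convergence against an arbitrary model function $f$; subtracting a constant (which does not change either side, as the relevant measures are probabilities by Theorem~\ref{Theorem: Measure Properties}(v)) we may further assume $\int f\, d\mu_{\met{L}, v} = 0$. For small rational $t > 0$ one forms the perturbed adelic metrized line bundle $\met{L}(tf) := \met{L} \otimes \met{O_X(tf)}$, which is integrable by Lemma~\ref{Lemma: Continuous functions are integrable}.

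A first routine step is an expansion. Multilinearity of the arithmetic pairing (Theorem~\ref{Theorem: Intersection properties}(iii)) together with Theorem~\ref{Theorem: Measure Properties}(i) give, for any subvariety $Y \subseteq X$ of dimension $r$,
\begin{equation*}
h_{\met{L}(tf)}(Y) = h_{\met{L}}(Y) + t[k(v):k] \int_{\an{X}_v} f\, d\mu_{Y, \met{L}, v} + t^2 R_Y(t),
\end{equation*}
where $R_Y(t)$ collects the terms of order $t^i$ with $i \geq 2$ and satisfies $|R_Y(t)| \leq C_f$ uniformly in $Y$ and $t$ in a fixed bounded range; here $C_f$ depends only on $\|f\|_\infty$, $\dim X$, and intersection numbers of $L$, and is extracted by iterated use of Theorem~\ref{Theorem: Intersection properties}(ii). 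Applied to $Y = X$ this yields $h_{\met{L}(tf)}(X) = O(t^2)$, thanks to condition (S2) and the normalization of $f$.

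The core of the proof is the function-field arithmetic Siu inequality
\begin{equation*}
\liminf_{\alpha \in A} h_{\met{L}(tf)}(Y_\alpha) \geq h_{\met{L}(tf)}(X) - \eta(t), \qquad \eta(t) = O(t^2) \text{ as } t \to 0^+.
\end{equation*}
To establish it, invoke (S1) to realize $\met{L}$ as a uniform limit of nef model metrics on $\BB$-models $(\XX_n, \LL_n)$ of $(X, L^{e_n})$, and Yuan's lemma (Lemma~\ref{Lemma: Lifting Model Functions}) to realize $\met{O_X(tf)}$ on the same model. Decomposing the resulting model line bundle as a difference $\mathcal{A}(t) \otimes \mathcal{B}(t)^{\vee}$ of nef line bundles on the $(d+1)$-dimensional variety $\XX_n$, one checks that the classical excess $\chern{\mathcal{A}(t)}^{d+1} - (d+1)\chern{\mathcal{A}(t)}^d \cdot \chern{\mathcal{B}(t)}$ agrees with $\achern{\met{L}(tf)}^{d+1}$ up to a term of order $t^2$. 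The classical Siu theorem then produces many global sections of suitable tensor powers, and the genericity of $(Y_\alpha)$ prevents these sections from vanishing identically on $Y_\alpha$ for $\alpha$ sufficiently large; a successive-minima argument in the style of Zhang and Yuan converts these section estimates into the claimed height lower bound.

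Combining everything, the two displays together with smallness of the net ($h_{\met{L}}(Y_\alpha) \to 0$) yield
\begin{equation*}
t[k(v):k] \liminf_{\alpha \in A} \int_{\an{X}_v} f\, d\mu_{Y_\alpha, \met{L}, v} \geq O(t^2);
\end{equation*}
dividing by $t > 0$ and letting $t \to 0^+$ gives $\liminf_\alpha \int f\, d\mu_{Y_\alpha, \met{L}, v} \geq 0$. Replacing $f$ by $-f$ throughout yields the matching upper bound for $\limsup$, completing the proof. The main obstacle is the core arithmetic Siu inequality of the third paragraph: converting the supply of global sections of the model line bundle $\mathcal{A}(t) \otimes \mathcal{B}(t)^{\vee}$ on $\XX_n$ into a uniform lower bound on the heights $h_{\met{L}(tf)}(Y_\alpha)$ requires a careful reworking of Yuan's number-field argument, making essential use of both condition (S1) (to control the model metrics as $n \to \infty$) and genericity of the net (to avoid the base loci of the constructed sections).
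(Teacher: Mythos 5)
Your overall plan --- Yuan's variational method, with the classical Siu theorem on a $\BB$-model supplying global sections and genericity of the net keeping $\overline{Y_\alpha}$ off their zero loci --- is exactly the route the paper takes, and you deploy (S1) (pass to nef models at small metric cost), (S2) (kill $\achern{\met{L}}^{d+1}$), Yuan's lemma, and the density of model functions in the same roles; your parameter $t \to 0^+$ is the paper's $N \to \infty$ after setting $t = 1/N$. However, two steps do not go through as written. First, the normalization $\int f\, d\mu_{\met{L},v} = 0$ is self-defeating: with it, the classical excess $\chern{\mathcal{A}(t)}^{d+1} - (d+1)\chern{\mathcal{A}(t)}^{d}\chern{\mathcal{B}(t)}$ agrees with $\achern{\met{L}(tf)}^{d+1} = O(t^2)$ only up to $O(t^2)$, so it has no definite sign and Siu's theorem (which requires strict positivity) cannot be invoked; your error term $\eta(t)$ is precisely the unproved content, and it is the first-order term $t\int f\, d\mu_{\met{L},v}$ that is supposed to dominate the $O(t^2)$ junk. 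The paper instead assumes $\int f\, d\mu_{\met{L},v} > 0$, fixes $N$ large enough that the excess is strictly positive, deduces only the one-sided bound $\liminf_\alpha \int f\, d\mu_{Y_\alpha,\met{L},v} \geq 0$, and recovers the general case at the very end by applying this to $f - \rho$ for rational $\rho$ increasing to $\int f\, d\mu_{\met{L},v}$ (rationality of $\rho$ also keeps $f - \rho$ a model function, a point your ``subtract a constant'' glosses over, since the integral need not lie in $\QQ$).

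Second, your expansion of $h_{\met{L}(tf)}(Y_\alpha)$ requires the uniform remainder bound $|R_Y(t)| \leq C_f$ over all subvarieties $Y$, i.e., control of $\achern{\met{O_X(f)}|_Y}^{j}\achern{\met{L}|_Y}^{\dim Y + 1 - j}/\deg_L(Y)$ for $j \geq 2$. This is provable (decompose $\met{O_X(f)}$ into ample pieces via Yuan's lemma and compare the resulting mixed degrees with $\deg_L(Y)$ using ampleness of $L$), but it is an extra lemma you have not supplied, and Theorem~\ref{Theorem: Intersection properties}(ii) alone does not give it because $\met{O_X(f)}$ is not semipositive. The paper sidesteps this entirely: from the Siu section $s$ of $(\LL \otimes \OO(f)^{e})^{r}$ it extracts only the single mixed inequality $\achern{\met{L}'|_{Y_\alpha}}^{\dim Y_\alpha}\achern{(\met{L}' \otimes \met{O_X(f)})|_{Y_\alpha}} \geq 0$ --- one perturbed factor, the rest unperturbed --- by intersecting $[\Div(s)]$ with $[\overline{Y_\alpha}]$ and applying Kleiman's theorem, so no second-order terms on $Y_\alpha$ ever arise. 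To complete your argument you should either prove the core inequality together with the uniform remainder bound (which in effect re-imports the positive shift), or adopt the paper's normalization and one-factor perturbation.
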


	Before turning to the proof, let us indicate why Theorem~\ref{Theorem: Generic Equidistribution} 
	follows from Theorem~\ref{Thm: Stronger equidistribution}. Let $(X, \varphi, L)$ be a dynamical 
	system defined over the function field $K$. Choose an isomorphism
	$\theta: \varphi^*L \simarrow L^q$. Let $\met{L}$ be the semipositive metrized line bundle with 
	underlying bundle $L$ and the associated invariant metrics at all places as defined in
	section~\ref{Section: Invariant metrics}. Then property (iii) of the same section is precisely the
	condition (S1). As $h_\varphi = h_{\met{L}}$ (by definition), the discussion at the end
	of section~\ref{Section: Dynamical Heights} shows condition (S2).
	Thus the hypotheses of Theorem~\ref{Thm: Stronger equidistribution} on $\met{L}$ are satisfied. 
	Upon unraveling all of the definitions, the conclusion of Theorem~\ref{Theorem: Generic 
	Equidistribution} follows immediately from that of the above theorem. 
	
	In order to see why the above theorem is more useful in practice, consider a dynamical system 
	$(X, \varphi, L)$, and let $Y$ be any subvariety of $X$ such that $h_{\varphi}(Y) = 0$. If 
	$\varphi(Y) \not= Y$, then $Y$ cannot be considered as a dynamical system on its own. 
	Nevertheless, we find that $\met{L}|_Y$ is a semipositive metrized line bundle satisfying conditions
	(S1) and (S2), and so we can use the above theorem to deduce equidistribution 
	statements for generic small nets of subvarieties of $Y$. 

\begin{proof}[Proof of Theorem~\ref{Thm: Stronger equidistribution}]
	
	Fix a place $v$ of $K$. By Lemma~\ref{Lemma: Density of Model Functions} and a 
	limiting argument, it suffices to prove the theorem when $f = -\log \|1\|_v^{1/n}$ is a model function.
	By linearity of the integral, we may take $n=1$.
	Lemma~\ref{Lemma: Lifting Model Functions} allows us to assume that $f$ is induced by a 
	$\BB$-model $(\XX, \OO(f))$ of $(X,O_X)$. We also choose ample line bundles $\MM_1$
	and $\MM_2$ on $\XX$ so that $\OO(f) = \MM_1 \otimes \MM_2^{\vee}$. 
	Let $\met{M}_i$ be the metrized line bundle on $X$ determined by $\MM_i$.
	Finally, we assume that $\int_{\an{X}_v} f \ d \mu_{\met{L},v} > 0$ for the moment and remove this 
	hypothesis at the end of the proof.
	
	For any $N \geq 1$, we define  $\met{L}^N(f) := \met{L}^N \otimes \met{O_X(f)}$. 
	We wish to compute the degree of this metrized line bundle in two ways.
	For the first, we have
		\be \label{Equation: Variation Computation}
      			\ba
     				\achern{\met{L}^{ N}(f)}^{d+1} &= \left(N\achern{\met{L}} +
     				\achern{\met{O_X(f)}}\right)^{d+1} \\
     				&= N^{d+1}\achern{\met{L}}^{d+1} + N^d(d+1)\deg_L(X)[k(v):k]\int_{\an{X}_v} f 
				\ d\mu_{\met{L},v} + O\left(N^{d-1}\right) \\
				&= N^d(d+1)\deg_L(X)[k(v):k]\int_{\an{X}_v} f \ d\mu_{\met{L},v} + 
				O\left(N^{d-1}\right).
		      	\ea
   		\ee
	The integral appears by definition of the measure $\mu_{\met{L},v}$. The term 
	$\achern{\met{L}}^{d+1}$ vanishes because it is the numerator of $h_{\met{L}}(X)$
	(condition (S2)). The constant in the error term depends on $\met{L}$ and $f$. 

		On the other hand, we see that
   		\be \label{Equation: Siu Computation}
      			\ba
    				\achern{\met{L}^{ N}(f)}^{d+1} &= \achern{\met{L}^{ N} \otimes
     				\met{M}_1 \otimes \met{M}_2^{\vee}}^{d+1} \\
     				&= \sum_{i=0}^{d+1} \binom{d+1}{i}(-1)^{d+1-i}\ \achern{\met{L}^{ N} 
				\otimes \met{M}_1}^i \achern{\met{M}_2}^{d+1-i} \\
     				&= \achern{\met{L}^{ N} \otimes \met{M}_1}^{d+1} - (d+1) 
				\achern{\met{L}^{ N} \otimes
     				\met{M}_1}^d \achern{\met{M}_2} + O\left(N^{d-1}\right).
      			\ea
   		\ee
	Recall that we assumed $\int_{\an{X}_v} f  d\mu_{\met{L},v} > 0$. Comparing 
	\eqref{Equation: Variation Computation} and \eqref{Equation: Siu Computation} shows that
	for $N$ sufficiently large, 
		\benn
		 	\achern{\met{L}^{ N} \otimes \met{M}_1}^{d+1} - (d+1)\achern{\met{L}^{ N} \otimes
     				\met{M}_1}^d \achern{\met{M}_2} >0.
		\eenn
	We may fix such an $N$ for the remainder of the argument, 
	and as it will have no effect on the proof, we will replace $\met{L}^{ N}$ by $\met{L}$.
		
	Choose $\varepsilon > 0$. By condition~(S1) we may select a $\BB$-model $(\XX', \LL)$ of $(X, L^e)$ 
	such that $\LL$ is nef, the metrics on the associated adelic metrized line bundle $\met{L}'$ with underlying 
	bundle $L$ are equal to those of $\met{L}$ at almost all places, and the sum of the weighted 
	distances $[k(v):k] \dist_v(\metric_{\met{L},v}, \metric_{\met{L}',v})$ at the other places is bounded 
	by $\varepsilon$.  By the Simultaneous Model Lemma, we may 
	assume that $\XX' = \XX$ so that $\OO(f)$ and $\LL$ are line bundles on $\XX$. Furthermore, 
	continuity of intersection numbers with respect to changes in the metric allows us to assume that
		\benn 
			\chern{\LL \otimes \MM_1^e}^{d+1} 
				- (d+1)\chern{\LL \otimes \MM_1^e}^d \chern{\MM_2^e} > 0. 
		\eenn
		
	The necessary tool from algebraic geometry needed to move forward at this point is
	
\begin{siu}[{\cite[Theorem~2.2.15]{Lazarsfeld_Positivity_2004}}] \label{Theorem: Siu}
	Let $\mathcal{Y}$ be a projective variety of dimension $n$ 
	over the field $k$ and suppose $\mathcal{N}_1$ and $\mathcal{N}_2$ are 
	nef line bundles on $\mathcal{Y}$. 
	If  
		\[
			\chern{\mathcal{N}_1}^n -n\ \chern{\mathcal{N}_1}^{n-1}\chern{\mathcal{N}_2} > 0,
		\]
	then $\left(\mathcal{N}_1 \otimes \mathcal{N}_2^{\vee}\right)^{ r}$ has nonzero 
	global sections for $r \gg 0$.
\end{siu}

	We are in a position to apply Siu's theorem with 
	$\YY = \XX$, $n=d+1$, $\mathcal{N}_1 = \LL \otimes \MM_1^e$ and 
	$\mathcal{N}_2 = \MM_2^{ e}$. It follows that the line bundle $\left(\LL
	\otimes \MM_1^{ e} \otimes \MM_2^{ (-e)}\right)^{ r} = 
	\left(\LL \otimes \OO(f)^{ e}\right)^{ r}$ admits global sections for all $r \gg 0$.
	Fix such an $r$ and a nonzero global section $s$. As 
	$(Y_\alpha)_{\alpha \in A}$ is a generic net in $X$, there exists $\alpha_0$ such that 
	$\overline{Y_\alpha}$ does not lie in the support of $\Div(s)$ for any $\alpha \geq \alpha_0$. 
	This means $\chern{\left(\LL \otimes \OO(f)^e\right)^r}\cdot [\overline{Y_\alpha}]$ is an 
	effective cycle. As $\LL$ is nef, Kleiman's theorem \cite[Thm.~1.4.9]{Lazarsfeld_Positivity_2004} shows
		\benn
			\chern{\LL}^{\dim Y_{\alpha}}
				\chern{\left(\LL \otimes \OO(f)^e\right)^r}\cdot [\overline{Y_\alpha}]
				\geq 0, 
		\eenn
	or equivalently 
		\be \label{Equation: Positivity}
			\achern{\met{L}'|_{Y_{\alpha}}}^{\dim Y_{\alpha}} 
			\achern{(\met{L}' \otimes \met{O_X(f)})|_{Y_{\alpha}}} \geq 0.
		\ee
	Our precision in picking the metrics on $\met{L}'$ and  
	Theorem~\ref{Theorem: Intersection properties}(ii) show that
		\be \label{Equation: Discrepancy}
			\ba \Bigg| \achern{\met{L}'|_{Y_{\alpha}}}^{\dim Y_{\alpha}} 
			\achern{(\met{L}' \otimes \met{O_X(f)})|_{Y_{\alpha}}}
			- \ &\achern{\met{L}|_{Y_{\alpha}}}^{\dim Y_{\alpha}} 
			\achern{(\met{L} \otimes \met{O_X(f)})|_{Y_{\alpha}}} \Bigg| \\
			&\leq \varepsilon (\dim Y_{\alpha} + 1) \deg_L(Y_\alpha).
			\ea
		\ee
	From \eqref{Equation: Positivity} and \eqref{Equation: Discrepancy} we now get
		\benn
			\ba
				h_{\met{L}}(Y_\alpha) + [k(v):k] &(\dim Y_\alpha + 1)^{-1} 
					\int_{\an{X}_v} f \ d\mu_{Y_\alpha, \met{L}, v} \\ 
				&= \frac{\achern{\met{L}|_{Y_{\alpha}}}^{\dim Y_\alpha} 
				\achern{(\met{L} \otimes \met{O_X(f)})|_{Y_{\alpha}}}}
				{(\dim Y_\alpha + 1)\deg_L(Y_\alpha)} 
				\geq - \varepsilon
			\ea
		\eenn
	for all $\alpha \geq \alpha_0$. Taking the limit over $\alpha \in A$ in this last expression 
	and recalling $h_{\met{L}}(Y_{\alpha}) \to 0$ 
	proves that 
		\[
			\liminf_{\alpha \in A} \int_{\an{X}_v} f \ d\mu_{Y_\alpha, \met{L}, v}  \geq 
			-\frac{\varepsilon(d+1)}{[k(v):k]}.
		\]
	Finally, $\varepsilon$ is independent of $f$, so we conclude that
		\be \label{Equation: Galois measure inequality}
			\liminf_{\alpha \in A} \int_{\an{X}_v} f \ d\mu_{Y_\alpha, \met{L}, v}  \geq 0.
		\ee	
		
	This last inequality holds for any model function $f$ such that $\int_{\an{X_v}} f
	d\mu_{\met{L},v} > 0$. In order to lift this restriction, we take an arbitrary model
	function $f$ and consider the function $f_1 = f - \rho$, where $\rho \in \log
	\sqrt{|K_v^{\times}|} = \QQ$ is such that $\int_{\an{X_v}} f_1 d\mu_{\met{L},v} > 0$. 
	Constant functions of this form are model functions, 
	and so $nf_1$ satisfies all of the necessary hypotheses to make the above argument go 
	through for some positive integer $n$. (We need $nf_1$ to be the model function associated to 
	a formal metric --- not just the root of a formal metric.) Applying~\eqref{Equation: Galois 
	measure inequality} to $nf_1$ shows that
    		\benn
         		\liminf_{\alpha \in A} \int_{\an{X}_v} f \ d\mu_{Y_\alpha, \met{L}, v}  \geq \rho.
    		\eenn
	Letting $\rho \to \int_{\an{X_v}} f d\mu_{\met{L},v}$ from below preserves the
	positivity of the integral of $f_1$ and shows
    		\benn
         		\liminf_{\alpha \in A} \int_{\an{X}_v} f \ d\mu_{Y_\alpha, \met{L}, v}  
		\geq \int_{\an{X_v}} f d\mu_{\met{L},v}.
    		\eenn
	Finally, we may replace $f$ with $-f$ in this argument to obtain the
	opposite inequality. The proof is now complete.
\end{proof}


\section{Corollaries of the Equidistribution Theorem} \label{Section: Applications}

        
        

	Our first corollary of the equidistribution theorem shows that for a dynamical system 
	$(X, \varphi, L)$, the invariant measures $\muinv$ reflect the $v$-adic distribution of
	the preperiodic points of the morphism $\varphi$. Recall that a closed point $x \in X$ is called
	\textit{preperiodic} if its (topological) forward orbit $\{\varphi^n(x): n=1, 2, \ldots\}$ is a finite set.

\begin{cor}
	Let $(X, \varphi, L)$ be an algebraic dynamical system over the function field $K$. For any
	generic net of preperiodic closed points $(x_{\alpha})_{\alpha \in A}$ in $X$ and any place $v$, 
	we have the following weak convergence of measures on $\an{X}_v$:
		\[
			\lim_{\alpha \in A} \frac{1}{\deg(x_{\alpha})} \sum_{y \in \orb(x_\alpha)} \deg_v(y) \delta_y
			= \muinv.
		\]
\end{cor}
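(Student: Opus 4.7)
The plan is to apply the main equidistribution result (Theorem~\ref{Thm: Stronger equidistribution}) directly to the net of closed points $(x_\alpha)$, viewing each $x_\alpha$ as a zero-dimensional subvariety of $X$.

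First I would verify that the hypotheses of Theorem~\ref{Thm: Stronger equidistribution} are in place for the semipositive metrized line bundle $\met{L}$ built from invariant metrics: condition (S1) is exactly property (iii) proved in section~\ref{Section: Invariant metrics}, and condition (S2) is the vanishing $h_\varphi(X)=0$ noted as a special case of Theorem~\ref{Theorem: Height properties}(iv) (since $X$ is preperiodic under $\varphi$, being fixed as a set). This is also the justification needed just after Theorem~\ref{Thm: Stronger equidistribution} to deduce Theorem~\ref{Theorem: Generic Equidistribution}.

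Next I would check that the net $(x_\alpha)$ is small in the sense required. Because $x_\alpha$ is preperiodic for $\varphi$, Theorem~\ref{Theorem: Height properties}(iv) gives $h_\varphi(x_\alpha) = h_{\met{L}}(x_\alpha) = 0$ for every $\alpha$, so the net is trivially small. Genericity is part of the hypothesis. Thus $(x_\alpha)$ is a generic small net of (zero-dimensional) subvarieties of $X$, and Theorem~\ref{Thm: Stronger equidistribution} applies: the measures $\mu_{\{x_\alpha\}, \met{L}, v}$ converge weakly to $\mu_{\met{L},v} = \muinv$.

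Finally, to match the stated form of the limit, I would invoke the explicit formula for $\mu_{\{x\}, \met{L}, v}$ when $x$ is a closed point, established at the end of section~\ref{Section: Associated measure} via Lemma~\ref{Lemma: Intersection vs evaluation of f}, namely
\[
\mu_{\{x\}, \met{L}, v} = \frac{1}{\deg(x)} \sum_{y \in \orb(x)} \deg_v(y)\, \delta_y.
\]
Substituting this into the conclusion of Theorem~\ref{Thm: Stronger equidistribution} gives the desired weak convergence. There is essentially no obstacle here; the only subtlety worth confirming is that the notion of ``generic net of closed points'' used in the corollary agrees with the notion of generic net of subvarieties used in Theorem~\ref{Thm: Stronger equidistribution}, which is immediate since a closed point is not contained in a proper closed subset $V$ iff it is not a point of $V$.
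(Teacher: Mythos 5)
Your proposal is correct and follows essentially the same route as the paper: the paper's proof simply cites Theorem~\ref{Theorem: Generic Equidistribution} together with the fact that preperiodic points have dynamical height zero (Theorem~\ref{Theorem: Height properties}(iv)), and the deduction of that theorem from Theorem~\ref{Thm: Stronger equidistribution} via conditions (S1), (S2) and the formula for $\mu_{\{x\},\met{L},v}$ is exactly the chain of reasoning you spell out.
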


\begin{proof}
	This is immediate from Theorem~\ref{Theorem: Generic Equidistribution} upon noting that
	preperiodic points have dynamical height zero (Theorem~\ref{Theorem: Height properties}(iv)). 
\end{proof}

	The preceding corollary is meaningless unless we can find generic nets of preperiodic points. However, it
	is not difficult to show that preperiodic points in $X(\alg{K})$ are Zariski dense in $X$. Once Zariski
	density is established, it is not hard to construct a generic net of preperiodic points by a diagonalization
	argument; for example, see the argument at the beginning of the proof of
	Corollary~\ref{Corollary: too much support}.

	If $E$ is a finite extension of $K$, we let $[E:K]_{\operatorname{s}}$ be the separable 
	degree of $E$ over $K$. Write $|X|$ for the set of closed points of a variety $X$.

\begin{cor} \label{Corollary: too much support}
	Let $(X, \varphi, L)$ be an algebraic dynamical system defined over the function field $K$, let
	$Y$ be any subvariety of $X$, and let $n$ be a positive integer. Suppose there exists a place $v$ 
	of $K$ such that the support of the probability measure $\mu_{Y, \varphi, v}$ on $\an{X}_v$ 
	contains at least $n+1$ points. Then there exists a positive number $\varepsilon$ such that the set
		\[
			Y_n(\varepsilon):=\{y \in |Y|: h_{\varphi}(y) \leq \varepsilon \text{ and } 
			[K(y):K]_{\operatorname{s}} \leq n \}
		\]õ
	is not Zariski dense in $Y$. 
\end{cor}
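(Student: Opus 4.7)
The plan is to argue by contradiction: suppose $Y_n(\varepsilon)$ is Zariski dense in $Y$ for every $\varepsilon > 0$, and derive a contradiction with the hypothesis that $\supp \mu_{Y,\varphi,v}$ contains at least $n+1$ distinct points. First I would extract a generic small net of closed points of $Y$. Direct the set of pairs $(V, m)$ with $V \subsetneq Y$ Zariski closed and $m \in \ZZ_{>0}$ by $(V, m) \leq (V', m')$ iff $V \subseteq V'$ and $m \leq m'$; for each index the density assumption yields a point $y_{(V,m)} \in Y_n(1/m) \setminus V$. By construction this net is generic in $Y$, satisfies $h_\varphi(y_{(V,m)}) \leq 1/m \to 0$, and satisfies $[K(y_{(V,m)}):K]_s \leq n$ throughout.

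Next I would invoke Theorem~\ref{Thm: Stronger equidistribution} applied to $(Y, \met{L}|_Y)$. Hypothesis (S1) is inherited from $X$: pulling back the nef $\BB$-models of $(X, L^{e_n})$ to the Zariski closure of $Y$ gives nef models of $(Y, L^{e_n}|_Y)$ whose metrics at each place converge uniformly to those of $\met{L}|_Y$. Hypothesis (S2), namely $h_\varphi(Y) = 0$, is a consequence of Zhang's essential minimum inequality: Zariski density of $Y_n(\varepsilon)$ for every $\varepsilon > 0$ forces the essential minimum $e(Y) := \sup_{V \subsetneq Y}\inf_{y \in Y \setminus V} h_\varphi(y)$ to vanish, and the inequality $h_\varphi(Y) \leq (\dim Y + 1)\,e(Y)\,\deg_L(Y)$, combined with the nonnegativity of heights in Theorem~\ref{Theorem: Height properties}(ii), then pins $h_\varphi(Y) = 0$. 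The equidistribution theorem then yields weak convergence $\mu_{\{y_{(V,m)}\},\varphi,v} \to \mu_{Y,\varphi,v}$ on $\an{X}_v$.

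To close the argument, note that by the explicit formula at the end of section~\ref{Section: Associated measure} the support of $\mu_{\{y\},\varphi,v}$ is exactly $\orb(y)$, whose cardinality equals the number of extensions of $v$ to $K(y)$ and is bounded above by $|\Hom_K(K(y), \alg{K_v})| = [K(y):K]_s \leq n$. On the other hand, using that $\an{X}_v$ is Hausdorff, one separates $n+1$ distinct points $z_0, \ldots, z_n \in \supp \mu_{Y,\varphi,v}$ by pairwise disjoint open neighborhoods $U_i \ni z_i$, and picks nonnegative continuous bump functions $f_i$ supported in $U_i$ with $\int f_i\, d\mu_{Y,\varphi,v} > 0$. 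Weak convergence then forces $\int f_i\, d\mu_{\{y_{(V,m)}\},\varphi,v} > 0$ for every $i = 0, \ldots, n$ once $(V, m)$ is sufficiently large, so $\mu_{\{y_{(V,m)}\},\varphi,v}$ must charge each of the $n+1$ disjoint sets $U_i$ and hence have at least $n+1$ support points, contradicting the upper bound of $n$ established above. The principal technical obstacle is verifying (S2) for $\met{L}|_Y$ via Zhang's essential minimum inequality in the function field setting; the diagonalization over $(V,m)$ and the Urysohn-style separation of points of $\an{X}_v$ are routine.
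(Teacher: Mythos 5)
Your proof is correct and follows essentially the same route as the paper: a contradiction-plus-diagonalization construction of a generic small net in $Y$, the equidistribution theorem applied to $(Y,\met{L}|_Y)$, separation of $n+1$ support points by disjoint neighborhoods, and the bound $\#\orb(y)\leq [K(y):K]_{\operatorname{s}}\leq n$ from the appendix. Your explicit verification of hypothesis (S2) via the essential-minimum (successive minima) inequality is a point the paper's proof leaves implicit, and it is a legitimate and correctly executed addition.
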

	
\begin{proof}
	If the theorem fails, then $Y_n(\varepsilon)$ is Zariski dense for each $\varepsilon > 0$. 
	We begin by constructing a generic small net. Let $A$ be the collection of all ordered
	pairs $(F,\varepsilon)$ consisting of a proper Zariski closed subset $F$ of $Y$ and a positive real number 
	$\varepsilon$. Then $A$ becomes a directed set when we endow it with the partial ordering
		\benn
			(F, \varepsilon) \leq (F', \varepsilon') \Longleftrightarrow F \subseteq F' 
			\text{ and } \varepsilon \geq \varepsilon'.
		\eenn
	For each pair $(F, \varepsilon) \in A$, select a point 
	$y_{F,\varepsilon} \in Y_n(\varepsilon) \cap (Y \smallsetminus F)$, a feat that is possible because
	$Y_n(\varepsilon)$ is Zariski dense. One checks easily that the net of points $(y_{F,\varepsilon})$ is 
	generic and $h_\varphi(y_{F,\varepsilon}) \to 0$. For ease of notation, we now relabel this net as
	$(y_\alpha)_{\alpha \in A}$.
	
	Let $p_0, \ldots, p_n$ be distinct points of $\an{Y}_v$ in the support of $\mu_{Y, \varphi, v}$.
	By topological normality of analytic spaces associated to proper varieties 
	\cite[Thm.~3.5.3]{Berkovich_Spectral_Theory_1990}, we can choose an open 
	neighborhood $U_i$ of $p_i$ for each $i$ with pairwise disjoint closures.  
	Fix an index $i_0$. Inside $U_{i_0}$, choose a compact neighborhood 
	$W$ of $p_{i_0}$. By Urysohn's lemma 
	we may find a continuous function $f: \an{X}_v \to [0,1]$ such that $f|_W \equiv 1$ and
	$f|_{\an{X}_v \smallsetminus U_{i_0}} \equiv 0$. Then Theorem~\ref{Thm: Stronger equidistribution}
	shows 
		\benn
		 	\lim_{\alpha \in A} \frac{1}{\deg(y_\alpha)} \sum_{z \in \orb(y_\alpha)} \deg_v(z) f(z) 
			 =\int_{\an{X}_v} f d\mu_{Y, \varphi, v} \geq\mu_{Y, \varphi, v}(W) > 0.
		\eenn
	Hence there exists $\alpha_0 \in A$ such that  $\orb(y_\alpha) \cap U_{i_0} \not= \emptyset$ for 
	all $\alpha \geq \alpha_0$. Repeating this argument for each index $i$, we can find $\alpha_1 \in A$ so that for 
	any $i = 0 , \ldots, n$ and $\alpha \geq \alpha_1$, we have $\orb(y_\alpha) \cap U_i \not= \emptyset$.
		
	For each point $y_\alpha$, the set $O_v(y_\alpha)$ consists of at most $n$ points
	by Corollary~\ref{Corollary: Degree formula} in the appendix. But the $n+1$ sets $U_i$ are disjoint 
	by construction, so we have a contradiction.
\end{proof}

	When $h_\varphi (Y) > 0$, the last corollary can be proved using the Theorem of Successive Minima
	with $\varepsilon = h_\varphi(Y) / 2$. We recall the statement of the theorem and indicate how
	this works. Let $Y$ be a variety defined over the function field $K$, and let $\met{L}$ be a 
	semipositive metrized line bundle on $Y$ with ample underlying bundle $L$. Define the quantity
		\[
			e_1(Y, \met{L}) = \sup_{\substack{V \subset Y \\ \codim(V, Y) = 1}} 
				\left\{ \inf_{y \in |Y\smallsetminus V|}  h_{\met{L}}(y) \right\}, 
		\]
	where the supremum is over all closed subsets $V$ of $Y$ of pure codimension $1$, and the 
	infimum is over closed points of $Y \smallsetminus V$. The Theorem of Successive Minima tells us that
		\[
			e_1(Y, \met{L}) \geq h_{\met{L}}(Y).
		\]
	This inequality was originally discovered by Zhang when $K$ is replaced by a number field
	\cite[Thm.~5.2]{Zhang_Positive_Arithmetic_Varieties_1995}, and it was proved by 
	Gubler when $K$ is a function field \cite[Lem.~4.1]{Gubler_Bogomolov_2007}.  
		
	Now let $\met{L}$ be the semipositive metrized line bundle associated to a dynamical system 
	$(X, \varphi, L)$ and an isomorphism $\theta: \varphi^*L \simarrow L^q$. By the Theorem of 
	Successive Minima, given any $\delta > 0$ there exists a closed 
	codimension-$1$ subset $V \subset Y$ so that
		\[
			\inf \{h_\varphi (y): y \in |Y \smallsetminus V|\} > h_\varphi (Y) - \delta. 
		\]
	If $h_\varphi (Y) > 0$, then
	we may take $\delta = h_\varphi (Y)/2$. The corollary follows immediately with 
	$\varepsilon = h_\varphi (Y)/2$ since $Y_n(\varepsilon) \subset V$. In fact, this shows that 
	$\cup_{n \geq 1} Y_n(\varepsilon)$ is not Zariski dense in $Y$ when $h_\varphi(Y) > 0$.

\begin{cor} \label{Cor: Preperiodic points not dense}
	Let $(X, \varphi, L)$ be an algebraic dynamical system over the function field $K$, let $Y$
	be any subvariety, and let $n$ be a positive integer. Suppose there exists a place $v$ of $K$ such 
	that the support of the probability measure $\mu_{Y, \varphi, v}$ on $\an{X}_v$ contains at least $n+1$ points. 
	Then the set of preperiodic closed points contained in $Y$ of separable degree at most $n$ over 
	$K$ is not Zariski dense in $Y$. 
\end{cor}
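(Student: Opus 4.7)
The plan is to deduce this directly from the previous corollary (Corollary~\ref{Corollary: too much support}) combined with the fact that preperiodic points have dynamical height zero. Specifically, by Theorem~\ref{Theorem: Height properties}(iv), any preperiodic closed point $y \in |Y|$ satisfies $h_\varphi(y) = 0$. Therefore, for any positive real number $\varepsilon$, the set of preperiodic closed points of $Y$ of separable degree at most $n$ is contained in the set
\[
Y_n(\varepsilon) = \{y \in |Y| : h_\varphi(y) \leq \varepsilon \text{ and } [K(y):K]_{\operatorname{s}} \leq n\}.
\]

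The hypothesis on the support of $\mu_{Y, \varphi, v}$ is precisely the hypothesis of Corollary~\ref{Corollary: too much support}, which provides a positive $\varepsilon$ for which $Y_n(\varepsilon)$ is not Zariski dense in $Y$. Applying this with this particular $\varepsilon$, we conclude that the set of preperiodic closed points of separable degree at most $n$, being a subset of a set that is not Zariski dense, is itself not Zariski dense in $Y$.

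There is no real obstacle here — the entire content of the corollary has already been packaged into Corollary~\ref{Corollary: too much support}. The role of this corollary is essentially to advertise the most striking special case: the dynamical height is identically zero on preperiodic points, so the quantitative bound $h_\varphi(y) \leq \varepsilon$ is automatically satisfied and drops out of the statement entirely, yielding a clean non-density statement purely in terms of the separable degree and the size of the support of the measure $\mu_{Y, \varphi, v}$.
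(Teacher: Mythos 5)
Your proposal is correct and is exactly the intended derivation: the paper states this corollary without a separate proof precisely because it follows immediately from Corollary~\ref{Corollary: too much support} together with Theorem~\ref{Theorem: Height properties}(iv), which is the containment you spell out. Nothing further is needed.
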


	The problem with these last two results is that one must have some knowledge of the support of the
	 measure $\mu_{Y, \varphi, v}$ in order to utilize them. As we indicated at the end of 
	section~\ref{Section: Invariant measure}, the support of the measure $\muinv$ is precisely one
	point if $X$ is smooth and the dynamical system $(X, \varphi, L)$ has good reduction at the place 
	$v$. So we cannot apply the corollaries in the case of good reduction. 
	
	We expect a converse to be true. Suppose that $X$ is geometrically connected and smooth
	over $K$ (e.g., the projective space $\PP^d_K$). If $E$ is a finite extension of $K$ and $v$ 
	is a place of 
	$K$, we say that the dynamical system $(X, \varphi, L)$ has \textit{potential good reduction at $v$}
	 if there exists a place $w$ of $E$ lying over $v$ so that the 
	base-changed dynamical system $(X_E, \varphi_E, L \otimes E)$ has good reduction at $w$. If
	$(X, \varphi, L)$ does not have potential good reduction at $v$, then it has \textit{genuinely bad
	reduction at $v$}. 	With these definitions in mind, we present the following folk conjecture which, 
	when combined with Corollaries~\ref{Corollary: too much support} 
	and~\ref{Cor: Preperiodic points not dense}, would yield very pleasing arithmetic results:
		
\begin{conjecture}
	Let $(X, \varphi, L)$ be a dynamical system defined over the function field $K$, and suppose $X$
	is smooth and geometrically connected. Then the support of the measure $\muinv$ is either
	a single point or else Zariski dense corresponding to the cases where $(X, \varphi, L)$ has potential
	good reduction or genuinely bad reduction.
\end{conjecture}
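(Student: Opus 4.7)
The plan is to split the dichotomy into two implications and handle them separately. The forward direction --- potential good reduction implies that $\supp \muinv$ is a single point --- is accessible using machinery already developed in the paper, while the reverse direction is the genuinely conjectural content of the statement.

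For the forward direction, I would assume $(X,\varphi,L)$ acquires good reduction at a place $w$ of a finite extension $E/K$ with $w \mid v$. Section~\ref{Section: Invariant measure} already shows that at such a $w$ one has $\mu_{\met{L}_E, w} = \delta_{\zeta_w}$, where $\zeta_w$ is the unique type~II point of $\an{X_E}_w$ reducing to the generic point of the smooth special fiber. The first step would be to verify that Tate's limit construction of Section~\ref{Section: Invariant metrics} is compatible with finite base change, so that the natural projection $\pi : \an{X_E}_w \to \an{X}_v$ carries $\mu_{\met{L}_E, w}$ to a positive scalar multiple of $\muinv$. A push-forward of a point mass is a point mass, forcing $\supp \muinv = \{\pi(\zeta_w)\}$, a single type~II point over the generic point of $X_{K_v}$.

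For the reverse direction I would argue contrapositively: assuming $\supp \muinv$ consists of a single point $\zeta$, produce a finite extension $E/K$ and a place $w \mid v$ at which $(X_E, \varphi_E, L \otimes E)$ has good reduction. The backward invariance $\varphi^* \muinv = q^d \muinv$ recalled at the end of Section~\ref{Section: Invariant measure} and established by Chambert-Loir and Thuillier forces $\an{\varphi}_v^{-1}(\zeta) = \{\zeta\}$ set-theoretically, so $\zeta$ is totally invariant. Since $\muinv$ does not charge proper subvarieties, the point $\zeta$ cannot lie in $\an{Y}_v$ for any proper $Y \subsetneq X$, which via the classification of Berkovich points forces $\zeta$ to be of type~II and to reduce to the generic point of $X_{K_v}$. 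Invoking the correspondence between such type~II points and (equivalence classes of) normal projective formal models of $X_{K_v}$ with reduced irreducible special fiber, I would select a formal model $\fX$ after finite base change so that $\zeta$ corresponds to the generic point of the special fiber. Total invariance of $\zeta$, together with an extension lemma in rigid geometry for endomorphisms given on the generic fiber, should allow one to extend $\varphi$ and $\theta : \varphi^* L \simarrow L^q$ to a formal self-morphism $\fX \to \fX$ equipped with a formal isomorphism $\varphi^* \fL \simarrow \fL^q$.

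The main obstacle is then to upgrade this formal model to a smooth one after a further finite base change. The polarized relation $\varphi^* \fL \cong \fL^q$ forces the induced endomorphism of the special fiber $\fX_s$ to be a polarized self-map, and by Lemma~\ref{Lem: Finiteness} it must be finite flat of degree $q^{\dim X}$. I would attempt to combine these constraints with an equivariant form of the semistable reduction theorem --- a function-field analogue of work of Yamaki, or an equivariant version of de Jong's alterations --- to show that any nonsmoothness of $\fX$ along $\zeta$ is destroyed under iteration of $\varphi$, and that a smooth model can be produced after enlarging $E$. Extracting genuine smooth reduction from the single totally invariant type~II point $\zeta$ alone appears to require input beyond the present paper and beyond the current state of nonarchimedean dynamics, and this is precisely why the statement is advanced only as a conjecture.
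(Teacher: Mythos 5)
This statement is advanced in the paper only as a folk conjecture; the paper supplies no proof, citing instead the known cases of $X=\PP^1_K$ (Baker--Rumely) and abelian varieties with totally degenerate reduction (Gubler). So there is no ``paper's proof'' to compare against, and your proposal should be judged as an attack on an open problem. Read that way, it is a reasonable but incomplete sketch, and you are right to flag the reverse direction as the genuinely conjectural content. The forward implication (potential good reduction $\Rightarrow$ point mass) is indeed within reach of the paper's machinery, modulo the base-change compatibility of Tate's limit process and of the measures, which the paper gestures at but never proves (a version of it appears only in commented-out text); that compatibility would need to be established, not merely ``verified.''

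There are, however, two gaps beyond the one you acknowledge. First, your decomposition does not actually yield the stated dichotomy. Even granting both of your implications, you would prove only that $\supp\muinv$ is a single point if and only if $(X,\varphi,L)$ has potential good reduction. The conjecture asserts more: that in the genuinely bad reduction case the support is \emph{Zariski dense}, i.e.\ there is no intermediate possibility where the support is neither a single point nor Zariski dense. Nothing in your proposal addresses ruling out this third alternative, and for $\dim X\geq 2$ this is a substantial part of the problem. Second, in the reverse direction the step ``$\zeta$ totally invariant $\Rightarrow$ $\varphi$ and $\theta$ extend to a formal model on which $\zeta$ is the generic point of the special fiber'' is not a routine extension lemma: producing a model of $X_{K_v}$ to which a given endomorphism extends is essentially equivalent to (weak) good reduction, so this step assumes a form of what is to be proved. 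The subsequent appeal to an ``equivariant semistable reduction'' to upgrade to a smooth model compounds this; no such tool is available, and the classification of type~II points by formal models does not interact with $\varphi$ in the way your argument requires. Your closing admission that the argument cannot be completed is accurate, and the honest conclusion is that only the forward implication can be extracted from the present paper's toolkit.
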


	The conjecture is true when $X$ is a curve. See for example the manuscript of Baker and Rumely 
	\cite[\S10.4]{Baker_Rumely_Potential_Berkovich_Line_2008} for the case $X = \PP^1_K$. (Compare 
	the article \cite{Baker_Function_Fields_2006} of Baker for a similar statement and arithmetic 
	consequence.) In \cite{Gubler_Bogomolov_2007} Gubler's work shows that if $X$ is an abelian variety 
	with totally degenerate reduction at a place $v$, then $\muinv$ has Zariski dense support. For an elliptic 
	curve, totally degenerate reduction is the same as genuinely bad reduction. In fact, in this case there 
	is a topological subspace of $\an{X}_v$ homeomorphic to a circle in such a way that $\muinv$ is a 
	Haar measure on this circle.

\section{Appendix}
\label{Section: Appendix}

\begin{prop} \label{Proposition: fiber ring calculation}
	Let $K$ be a field that is finitely generated over its prime field. Let $E$ be a finite extension of $K$, 
	$v$ a discrete valuation of $K$, and $K_v$ the completion of $K$ with respect to $v$. Then there are at most 
	$[E:K]_{\operatorname{s}}$ valuations $w$ 
	extending $v$ to $E$, and if $E_w$ is the completion of $E$ with respect to the valuation $w$, 
	then there exists an 
	isomorphism of $K_v$-algebras
		\benn \label{Equation: Alg and Top isomorphism}
			K_v \otimes_K E  \cong \prod_{w|v} E_w.
		\eenn
\end{prop}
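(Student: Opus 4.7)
My plan is to reduce the proposition to the case of a separable extension $E/K$ (where it follows from a direct Chinese Remainder Theorem computation), and then bootstrap to the general case by tensoring along the separable closure of $K$ in $E$.

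Step~1: the reduction. Let $E_s$ denote the separable closure of $K$ inside $E$, so that $[E_s:K] = [E:K]_{\operatorname{s}}$ and $E/E_s$ is purely inseparable. Any valuation on a base field admits a unique extension to a purely inseparable extension (the identity $w'(\alpha^{p^n}) = w(\alpha^{p^n})$ forces $w'(\alpha)$ in the divisible hull of the value group), so the places of $E$ lying over $v$ are in bijection with the places of $E_s$ lying over $v$. It therefore suffices to prove the counting bound over $E_s$.

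Step~2: the separable case. Apply the primitive element theorem to write $E_s = K(\alpha)$, with minimal polynomial $f \in K[x]$ of degree $n = [E_s:K]$. Since $f$ is separable, its irreducible factorization $f = f_1 \cdots f_r$ over $K_v$ has pairwise coprime factors, and the Chinese Remainder Theorem gives
\[
K_v \otimes_K E_s \;\cong\; K_v[x]/(f) \;\cong\; \prod_{i=1}^r K_v[x]/(f_i).
\]
Each $L_i := K_v[x]/(f_i)$ is a finite field extension of $K_v$, hence complete, and contains $E_s$ as a dense subfield; restricting the unique valuation on $L_i$ to $E_s$ therefore supplies a place $w_i$ of $E_s$ extending $v$, with $(E_s)_{w_i} \cong L_i$. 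Distinct $i$ yield distinct $w_i$, so $r \leq n = [E:K]_{\operatorname{s}}$ extensions as desired.

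Step~3: the general case. Tensoring the isomorphism $K_v \otimes_K E_s \cong \prod_{w'} (E_s)_{w'}$ with $E$ over $E_s$ yields
\[
K_v \otimes_K E \;\cong\; \prod_{w'|v} \bigl((E_s)_{w'} \otimes_{E_s} E\bigr),
\]
so it remains to identify each factor with $E_w$, where $w$ is the unique extension of $w'$ to $E$. The main obstacle here is that $(E_s)_{w'} \otimes_{E_s} E$ could a priori fail to be reduced---this actually happens if a non-$p$-th power in $E_s$ becomes a $p$-th power in its completion. Exactly at this point one uses the hypothesis that $K$, and hence $E_s$, is finitely generated over its prime field: this guarantees that the field extension $(E_s)_{w'}/E_s$ is separable (an excellence-type property provable by exhibiting an explicit $p$-basis of $E_s$ that remains a $p$-basis of $(E_s)_{w'}$), which in turn forces $(E_s)_{w'} \otimes_{E_s} E$ to be reduced. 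A finite reduced algebra over the complete field $(E_s)_{w'}$ all of whose primes lie over the unique extension $w$ must be a field; it contains $E$ densely, and is therefore $E_w$. Assembling these identifications over all places $w'$ completes both the count and the isomorphism.
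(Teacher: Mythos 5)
Your proposal is correct and follows essentially the same route as the paper: both split $E/K$ through the separable closure $\sep{E}$, dispose of the separable part by the classical completion--tensor decomposition (which you prove via the primitive element theorem and CRT, where the paper cites Cassels--Fr\"ohlich), and then handle the purely inseparable layer by invoking the same key fact that the completion of a finitely generated field at a discrete valuation is a separable extension --- you phrase this via $p$-bases, the paper via the G-ring property of $\OO_v$ and geometric regularity --- to conclude the relevant tensor product is reduced and hence a field. The only point worth tightening is your final "must be a field" step: the cleanest justification is that a tensor product of a field with a purely inseparable extension is automatically a local ring, so reducedness alone forces it to be a field.
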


\begin{proof}
	If $E$ is a separable extension of $K$, this is proved in \cite[II.9-10]{Cassels_Frohlich}. Any
	algebraic extension can be decomposed as $K \subset \sep{E} \subset E$, where $\sep{E}$ is
	the separable closure of $K$ in $E$, and $E / \sep{E}$ is a purely inseparable extension.
	By tensoring first up to the separable closure, we may apply the result in the separable case 
	and reduce to the situation where $E / K$ is a purely inseparable extension. Thus we may 
	suppose $K$ has positive characteristic $p$. It now suffices to
	show that the valuation $v$ extends in exactly one way to $E$, and 
	that $K_v \otimes_K E \cong E_w$ holds. 
	To that end, we may even reduce to the case where $E$ is a simple nontrivial extension of $K$;
	i.e., there is $\gamma \in E \smallsetminus K$ such that $E = K(\gamma)$.

	We first argue $K_v \otimes_K E$ is a field. The valuation ring $O_v \subset K$, being the localization
	of an algebra of finite type over $\FF_p$, is a G-ring \cite[\S32]{Matsumura_CRT_1989}. Hence 
	$O_v \to O_v^{\wedge} = \Kvcirc$ is a regular homomorphism, which implies $K_v = \Frac(\Kvcirc)$ is 
	geometrically regular over $K = \Frac(O_v)$. In particular, $K_v \otimes_K E$ is a reduced ring. 
	
	On the other hand, as $E$ is a simple purely inseparable extension of $K$, we may write 
	$E = K[x] / (f(x))$ for some irreducible polynomial $f(x) = x^{p^n} - a = (x - \gamma)^{p^n}$, some positive 
	integer $n$, $a \in K$ and $\gamma \in E \smallsetminus K$. Evidently 
	$K_v \otimes_K E = K_v[x] / (f(x))$ is reduced if and only if $\gamma \not\in K_v$. Thus $f(x)$ is irreducible 
	over $K_v$ and $K_v \otimes_K E$ is a field. Note that it is the (unique) minimal extension of $K_v$ containing
	$E$.
	
	If $F$ is any finite extension of $K_v$, then $F$ inherits a unique extension of the valuation $v$ and is complete 
	with respect to the extended valuation. Therefore $K_v \otimes_K E$ is a complete field under the unique
	extension of $v$. Let $w$ be the restriction of the extended valuation to $E \subset K_v \otimes_K E$. By 
	continuity the completion $E_w$ injects canonically into $K_v \otimes_K E$, and 
	since $K_v \otimes_K E$ is the minimal extension of $K_v$ containing $E$, we must have 
	$E_w = K_v \otimes_K E$. We have already mentioned that $v$ extends uniquely to $K_v \otimes_K E$, so
	the proof is complete. 
\end{proof}


\begin{cor} \label{Corollary: Degree formula}
	Let $X$ be a variety over the function field $K$ as in previous sections.
	If $x \in |X|$ is a closed point, $v$ is a place of $K$ and $\psi: X_{K_v} \to X$ is the 
	base change morphism, then there are at most $[K(x):K]_{\operatorname{s}}$ points
	in $\psi^{-1}(x)$, and
		\[
			[K(x):K] = \sum_{y \in \psi^{-1}(x)} [K_v(y):K_v].
		\]
\end{cor}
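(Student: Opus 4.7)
My plan is to translate the claim into a statement about the $K_v$-algebra $K(x) \otimes_K K_v$ and then apply Proposition~\ref{Proposition: fiber ring calculation} directly. Since $x$ is a closed point of $X$, the residue field $K(x)$ is a finite extension of $K$, and the canonical morphism $\Spec K(x) \to X$ realizes $x$ as a closed subscheme of $X$. The scheme-theoretic fiber of $\psi: X_{K_v} \to X$ over $x$ is then obtained by base change:
\benn
	\psi^{-1}(x) = \Spec K(x) \times_{\Spec K} \Spec K_v = \Spec\bigl(K(x) \otimes_K K_v\bigr).
\eenn
Thus the underlying set of $\psi^{-1}(x)$ is in bijection with the (maximal) prime ideals of the finite-dimensional $K_v$-algebra $K(x) \otimes_K K_v$, and the residue field $K_v(y)$ at a point $y \in \psi^{-1}(x)$ is exactly the corresponding quotient of $K(x) \otimes_K K_v$.

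Next, I would apply Proposition~\ref{Proposition: fiber ring calculation} with $E = K(x)$ (noting that $K(x)$ is a finite extension of $K$, so the proposition is applicable). This produces an isomorphism of $K_v$-algebras
\benn
	K(x) \otimes_K K_v \cong \prod_{w \mid v} K(x)_w,
\eenn
where the product ranges over the valuations $w$ of $K(x)$ extending $v$, and where each $K(x)_w$ is a finite field extension of $K_v$. Since each $K(x)_w$ is a field, the maximal ideals of the product correspond bijectively to the factors. Therefore the points of $\psi^{-1}(x)$ are in bijection with the set $\{w \mid v\}$, and for the point $y$ corresponding to $w$ we have $K_v(y) \cong K(x)_w$. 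The first assertion of the proposition bounds the number of such $w$ by $[K(x):K]_{\operatorname{s}}$, which gives the claimed bound on the cardinality of $\psi^{-1}(x)$.

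For the degree formula, I would simply compute the $K_v$-dimension of both sides of the above isomorphism. On the left,
\benn
	\dim_{K_v} \bigl(K(x) \otimes_K K_v\bigr) = [K(x):K],
\eenn
because tensoring with a field preserves dimension. On the right,
\benn
	\dim_{K_v} \prod_{w \mid v} K(x)_w = \sum_{w \mid v} [K(x)_w : K_v] = \sum_{y \in \psi^{-1}(x)} [K_v(y):K_v],
\eenn
using the identification $K_v(y) = K(x)_w$ from the previous step. Equating the two expressions yields the corollary. There is no serious obstacle in the argument beyond invoking the proposition; the only point requiring a moment's care is verifying that residue fields of $\Spec K(x) \otimes_K K_v$ at its (finitely many) closed points agree with the fields $K(x)_w$ supplied by the proposition, which is immediate from the product decomposition.
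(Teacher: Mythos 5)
Your proposal is correct and is essentially identical to the paper's own (very terse) proof: identify the fiber ring as $K_v \otimes_K K(x)$, invoke Proposition~\ref{Proposition: fiber ring calculation} to decompose it as $\prod_{w\mid v} K(x)_w$, and compare $K_v$-dimensions. You have simply written out the details the paper leaves implicit; no changes are needed.
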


\begin{proof}
	The ring of functions on the scheme-theoretic fiber $\psi^{-1}(x)$ is $K_v \otimes_K K(x)$. 
	Use Proposition~\ref{Proposition: fiber ring calculation} and compute dimensions over $K_v$.
\end{proof}

	\bibliographystyle{alpha}
	\bibliography{xander_bib}

\providecommand\biburl[1]{\texttt{#1}}
\begin{thebibliography}{Zha95b}

\bibitem[Bak08]{Baker_Function_Fields_2006}
Matthew Baker.
\newblock A finiteness theorem for canonical heights attached to rational maps
  over function fields.
\newblock \verb+arXiv:math/0601046v2+, to appear in \textit{J. Reine Angew.
  Math.}, 2008.

\bibitem[BD01]{Briend_Duval_2001}
Jean-Yves Briend and Julien Duval.
\newblock Deux caract\'erisations de la mesure d'\'equilibre d'un endomorphisme
  de {$\mathbb{P}^k(\mathbb{C})$}.
\newblock {\em Publ. Math. Inst. Hautes \'Etudes Sci.}, (93):145--159, 2001.

\bibitem[Ber90]{Berkovich_Spectral_Theory_1990}
Vladimir~G. Berkovich.
\newblock {\em Spectral theory and analytic geometry over non-{A}rchimedean
  fields}, volume~33 of {\em Mathematical Surveys and Monographs}.
\newblock American Mathematical Society, Providence, RI, 1990.

\bibitem[Ber94]{Berkovich_Vanishing_Cycles_Formal}
Vladimir~G. Berkovich.
\newblock Vanishing cycles for formal schemes.
\newblock {\em Invent. Math.}, 115(3):539--571, 1994.

\bibitem[BG06]{Bombieri-Gubler_2006}
Enrico Bombieri and Walter Gubler.
\newblock {\em Heights in {D}iophantine geometry}, volume~4 of {\em New
  Mathematical Monographs}.
\newblock Cambridge University Press, Cambridge, 2006.

\bibitem[BGS95]{Bloch-Gillet-Soule_Arakelov_Theory}
S.~Bloch, H.~Gillet, and C.~Soul{\'e}.
\newblock Non-{A}rchimedean {A}rakelov theory.
\newblock {\em J. Algebraic Geom.}, 4(3):427--485, 1995.

\bibitem[BH05]{Baker-Hsia_Dynamics}
Matthew~H. Baker and Liang-Chung Hsia.
\newblock Canonical heights, transfinite diameters, and polynomial dynamics.
\newblock {\em J. Reine Angew. Math.}, 585:61--92, 2005.

\bibitem[BL93]{Bosch-Lutkebohmert_Formal_Rigid_1}
Siegfried Bosch and Werner L{\"u}tkebohmert.
\newblock Formal and rigid geometry. {I}. {R}igid spaces.
\newblock {\em Math. Ann.}, 295(2):291--317, 1993.

\bibitem[BR06]{Baker_Rumely_Equidistribution_2005}
Matthew~H. Baker and Robert Rumely.
\newblock Equidistribution of small points, rational dynamics, and potential
  theory.
\newblock {\em Ann. Inst. Fourier (Grenoble)}, 56(3):625--688, 2006.

\bibitem[BR08]{Baker_Rumely_Potential_Berkovich_Line_2008}
Matthew Baker and Robert Rumely.
\newblock Potential theory on the {B}erkovich projective line.
\newblock \verb+http://www.math.gatech.edu/~mbaker/pdf/NewBerkBook_111008.pdf+,
  manuscript, 2008.

\bibitem[CF67]{Cassels_Frohlich}
J.~W.~S. Cassels and A.~Fr{\"o}hlich, editors.
\newblock {\em Algebraic number theory}.
\newblock Proceedings of an instructional conference organized by the London
  Mathematical Society (a NATO Advanced Study Institute) with the support of
  the Inter national Mathematical Union. Edited by J. W. S. Cassels and A.
  Fr\"ohlich. Academic Press, London, 1967.

\bibitem[CL06]{Chambert-Loir_Measures_2005}
Antoine Chambert-Loir.
\newblock Mesures et \'equidistribution sur les espaces de {B}erkovich.
\newblock {\em J. Reine Angew. Math.}, 595:215--235, 2006.

\bibitem[CLT08]{Chambert-Loir_Thuillier_Mahler_Integrals_2008}
Antoine Chambert-Loir and Amaury Thuillier.
\newblock Mesures de {M}ahler et \'equidistribution logarithmique.
\newblock \verb+arXiv:math/0612556v2 [math.NT]+, to appear in Ann. Inst.
  Fourier (Grenoble), 2008.

\bibitem[Con99]{Conrad_Irreducible_Components}
Brian Conrad.
\newblock Irreducible components of rigid spaces.
\newblock {\em Ann. Inst. Fourier (Grenoble)}, 49(2):473--541, 1999.

\bibitem[Fak03]{Fakhruddin_Self_Maps_2003}
Najmuddin Fakhruddin.
\newblock Questions on self maps of algebraic varieties.
\newblock {\em J. Ramanujan Math. Soc.}, 18(2):109--122, 2003.
\newblock He proves that every dynamical system embeds in projective space.

\bibitem[FRL06]{Favre-Rivera-Letelier_Quant_Equi_2006}
Charles Favre and Juan Rivera-Letelier.
\newblock \'{E}quidistribution quantitative des points de petite hauteur sur la
  droite projective.
\newblock {\em Math. Ann.}, 335(2):311--361, 2006.

\bibitem[FRL07]{Favre-Rivera-Letelier_Quant_Equi_Corrigendum}
Charles Favre and Juan Rivera-Letelier.
\newblock Corrigendum to: ``{Q}uantitative uniform distribution of points of
  small height on the projective line'' ({F}rench) [{M}ath. {A}nn. {\bf 335}
  (2006), no. 2, 311--361; mr2221116].
\newblock {\em Math. Ann.}, 339(4):799--801, 2007.

\bibitem[Ful98]{Fulton_Intersection_Theory_1998}
William Fulton.
\newblock {\em Intersection theory}, volume~2 of {\em Ergebnisse der Mathematik
  und ihrer Grenzgebiete. 3. Folge. A Series of Modern Surveys in Mathematics
  [Results in Mathematics and Related Areas. 3rd Series. A Series of Modern
  Surveys in Mathematics]}.
\newblock Springer-Verlag, Berlin, second edition, 1998.

\bibitem[Gub97]{Gubler_Heights_M_Fields_1997}
Walter Gubler.
\newblock Heights of subvarieties over {$M$}-fields.
\newblock In {\em Arithmetic geometry (Cortona, 1994)}, Sympos. Math., XXXVII,
  pages 190--227. Cambridge Univ. Press, Cambridge, 1997.

\bibitem[Gub98]{Gubler_Local_Heights_of_Subvarieties_1998}
Walter Gubler.
\newblock Local heights of subvarieties over non-{A}rchimedean fields.
\newblock {\em J. Reine Angew. Math.}, 498:61--113, 1998.

\bibitem[Gub03]{Gubler_Local_Canonical_Heights_2003}
Walter Gubler.
\newblock Local and canonical heights of subvarieties.
\newblock {\em Ann. Sc. Norm. Super. Pisa Cl. Sci. (5)}, 2(4):711--760, 2003.

\bibitem[Gub07]{Gubler_Bogomolov_2007}
Walter Gubler.
\newblock The {B}ogomolov conjecture for totally degenerate abelian varieties.
\newblock {\em Invent. Math.}, 169(2):377--400, 2007.

\bibitem[Gub08]{Gubler_FF_Equi_2008}
Walter Gubler.
\newblock Equidistribution over function fields.
\newblock \verb+arXiv:0801.4508v1+, to appear in Manuscripta Math., 2008.

\bibitem[Laz04]{Lazarsfeld_Positivity_2004}
Robert Lazarsfeld.
\newblock {\em Positivity in algebraic geometry. {I}}, volume~48 of {\em
  Ergebnisse der Mathematik und ihrer Grenzgebiete. 3. Folge. A Series of
  Modern Surveys in Mathematics [Results in Mathematics and Related Areas. 3rd
  Series. A Series of Modern Surveys in Mathematics]}.
\newblock Springer-Verlag, Berlin, 2004.
\newblock Classical setting: line bundles and linear series.

\bibitem[Mat89]{Matsumura_CRT_1989}
Hideyuki Matsumura.
\newblock {\em Commutative ring theory}, volume~8 of {\em Cambridge Studies in
  Advanced Mathematics}.
\newblock Cambridge University Press, Cambridge, second edition, 1989.
\newblock Translated from the Japanese by M. Reid.

\bibitem[Pet07]{Petsche_Elliptic_Equi_2007}
Clayton Petsche.
\newblock Non-archimedean equidistribution on elliptic curves with global
  applications.
\newblock \verb+arXiv:0710.3957v2+, preprint, 2007.

\bibitem[Sil07]{Silverman_Dynamics_Book_2007}
Joseph~H. Silverman.
\newblock {\em The arithmetic of dynamical systems}, volume 241 of {\em
  Graduate Texts in Mathematics}.
\newblock Springer, New York, 2007.

\bibitem[SUZ97]{Szpiro-Ullmo-Zhang}
L.~Szpiro, E.~Ullmo, and S.~Zhang.
\newblock \'{E}quir\'epartition des petits points.
\newblock {\em Invent. Math.}, 127(2):337--347, 1997.

\bibitem[Yua08]{Xinyi_Arithmetic_Bigness_2006}
Xinyi Yuan.
\newblock Big line bundles over arithmetic varieties.
\newblock {\em Invent. Math.}, 173(3):603--649, 2008.

\bibitem[Zha95a]{Zhang_Positive_Arithmetic_Varieties_1995}
Shou-wu Zhang.
\newblock Positive line bundles on arithmetic varieties.
\newblock {\em J. Amer. Math. Soc.}, 8(1):187--221, 1995.

\bibitem[Zha95b]{Zhang_Small_Points_1995}
Shou-wu Zhang.
\newblock Small points and adelic metrics.
\newblock {\em J. Algebraic Geom.}, 4(2):281--300, 1995.

\end{thebibliography}

\end{document}